\begin{document}

\theoremstyle{plain}
  \newtheorem{theorem}{Theorem}[section]
  \newtheorem{proposition}[theorem]{Proposition}
  \newtheorem{lemma}[theorem]{Lemma}
  \newtheorem{corollary}[theorem]{Corollary}
  \newtheorem{conjecture}[theorem]{Conjecture}
\theoremstyle{definition}
  \newtheorem{definition}[theorem]{Definition}
  \newtheorem{example}[theorem]{Example}
  \newtheorem{observation}[theorem]{Observation}
  \newtheorem{convention}[theorem]{Convention}
  \newtheorem{observations}[theorem]{Observations}
  \newtheorem{question}[theorem]{Question}
 \theoremstyle{remark}
  \newtheorem{remark}[theorem]{Remark}
\def\ff{{\mathbf f}}
\def\ZZ{{\mathbb Z}}
\def\QQ{{\mathbb Q}}
\def\CC{{\mathbb C}}
\def\NN{{\mathbb N}}
\def\comp{\mathrm{c}}
\def\Kbar{{\overline{K}}}
\def\isom{\cong}
\def\homotopic{\simeq}

\def\Y{{\mathbf Y}}
\def\SY{{S\mathbf Y}}
\def\YF{{\mathbf Y F}}
\def\il{{\text{Smith-eigenvalue}}}
\def\pd{{\frac{\partial}{\partial p_1}}}
\def\K{{\mathbb C}}
\def\E{{\mathcal E}}
\def\A{{\mathscr A}}
\def\P{{\mathscr{P}}}

\def\Q{{\mathbf Q}}
\def\GL{\mathrm{GL}}
\def\symm{\mathfrak{S}}

\def\tilde{\widetilde}

\def\diff{\setminus}
\def\Par{P(W,R)}
\def\coker{\mathrm{coker}}
\def\im{\mathrm{im}}
\def\diag{\mathrm{diag}}
\def\rank{\mathrm{rank}}
\def\Res{\mathrm{Res}}
\def\Ind{\mathrm{Ind}}
\def\triv{{\mathbbm{1}}}
\def\Irr{\mathrm{Irr}}
\def\ones{{\mathrm{ones}}}
\def\Gal{{\mathrm{Gal}}}
\def\Fix{\mathrm{Fix}}
\def\Pw{\mathcal P_W}
\def\type{\mathrm{type}}
\def\Class{{\mathbf{Cl}}}
\def\CoShe{\mathrm{CoShe}}
\def\sd{\mathrm{sd}}
\def\pDelta{\Delta^{U}}
\def\lk{\mathrm{lk}}
\def\L{\mathscr L}
\def\calF{\mathcal{F}}
\def\calP{\mathcal{U}}
\def\calP{\mathcal{U}}
\def\calB{\mathcal{B}}
\def\calT{T}
\def\calJ{\mathcal{J}}
\def\affine{\mathrm{{affine}}}
\def\pPi{\Pi^U}
\def\Stab{\mathrm{Stab}}
\def\St{\mathrm{St}}
\def\Supp{\mathrm{Supp}}
\def\Span{\mathrm{Span}}
\def\Def{\stackrel{\textbf{def}}{=}}
\def\GDef{\stackrel{\phantom{\text{def}}}{=}}
\def\Dash{\text{\textbf{---}}}
\def\Conv{\mathrm{Conv}_{\mathbb R}}
\def\AffSpan{\mathrm{AffSpan}}
\def\LinSpan{\mathrm{Span}}
\def\Hull{\mathrm{Hull}}
\def\Face{\mathrm{Face}}
\def\Sd{\mathrm{Sd}}
\def\Hilb{\mathrm{Hilb}}
\def\Hom{\mathrm{Hom}}
\newcommand{\Wedge}{{\textstyle{\bigwedge}}}
\title{Reflection arrangements and ribbon representations}
\author{Alexander R. Miller}
\email{mill1966@math.umn.edu}
\address{ School of Mathematics\\
University of Minnesota\\
Minneapolis, MN 55455} 
\begin{abstract}
Ehrenborg and Jung~\cite{Ehrenborg} recently related the order complex for the lattice of 
$d$-divisible partitions with the simplicial complex of \emph{pointed ordered set partitions} 
via a homotopy equivalence.  The latter has top homology naturally identified as a 
Specht module.  Their work unifies that of Calderbank, Hanlon, Robinson~\cite{CalHanRob}, and Wachs~\cite{Wachs}.
By focusing on the underlying geometry, we strengthen and extend these results 
from type $A$ to all real reflection groups and the complex reflection groups known as 
\emph{Shephard groups}.
\end{abstract}
\thanks{Supported by NSF grant DMS-1001933}
\keywords{$d$-divisible partition lattice, Specht modules, ribbon representations, homology, homotopy, well-generated complex reflection groups, hyperplane arrangements, complex regular 
polytopes}
\maketitle
\section{Introduction}
The aim of this paper is to elucidate a phenomenon that has been studied 
for the symmetric group $\mathfrak S_n$ by studying the underlying geometry.  Here we sketch 
the phenomenon, along with our geometric interpretation and generalization.

For $n+1$ divisible by $d$, recall that the $d$-divisible partition lattice $\Pi_{n+1}^d\cup\{\hat{0}\}$ 
is the poset of partitions of the set $\{1,2,\ldots, n+1\}$ with 
parts divisible by $d$, together with a unique minimal element $\hat{0}$ when $d>1$.  
In~\cite{CalHanRob}, Calderbank, Hanlon and Robinson showed that for $d>1$ the top homology of 
the order complex 
$\Delta(\Pi_{n+1}^d\diff\{\hat{1}\})$, when restricted from $\mathfrak S_{n+1}$ to 
$\mathfrak S_n$, carries the \emph{ribbon representation} of $\mathfrak S_n$ 
corresponding to a ribbon with row sizes $(d,d,\ldots, d, d-1)$.  
Wachs~\cite{Wachs} gave a more explicit proof of this fact.  
Their results generalized Stanley's~\cite{Stanley} result for the M\"obius 
function of $\Pi^d_n\cup\{\hat{0}\}$, which generalized G. S. Sylvester's~\cite{Sylvester} result for 
2-divisible partitions $\Pi^2_n\cup\{\hat{0}\}$.

Ehrenborg and Jung extend the above 
results by introducing posets of \emph{pointed partitions} $\Pi_{\vec{c}}^\bullet$ 
parametrized by a composition $\vec{c}$ of $n$ with last part possibly $0$, 
from which they obtain all ribbon representations.  More importantly, they explain why 
Specht modules are appearing by establishing a homotopy equivalence with another complex whose top homology is 
naturally a Specht module.

Ehrenborg and Jung construct their pointed partitions $\Pi_{\vec{c}}^\bullet\subset\Pi_{n}^\bullet\cong \Pi_{n+1}$ 
by distinguishing a particular block (called the \emph{pointed block}) and 
restricting to those of type $\vec{c}$.  
They show that $\Delta(\Pi_{\vec{c}}^\bullet\diff\{\hat{1}\})$ is homotopy equivalent 
to a wedge of spheres, and that the top reduced homology 
$\tilde{H}_{\rm{top}}(\Delta(\Pi_{\vec{c}}^\bullet\diff\{\hat{1}\}))$ is the $\mathfrak S_n$-Specht 
module corresponding to $\vec{c}$.  

Their approach is to first relate $\Pi_{\vec{c}}^\bullet$ to 
a selected subcomplex $\Delta_{\vec{c}}$ of the simplicial complex $\Delta_n^\bullet$ 
of ordered set partitions of $\{1,2,\ldots, n\}$ with last block possibly empty.  
In particular, they use Quillen's fiber lemma 
to show that $\Delta(\Pi_{\vec{c}}^\bullet\diff\{\hat{1}\})$ is homotopy equivalent to 
$\Delta_{\vec{c}}$.  They then 
give an explicit basis for $\tilde{H}_{\rm{top}}(\Delta_{\vec{c}})$ that identifies the top homology as a Specht module.

Ehrenborg and Jung recover the results of Calderbank, Hanlon and 
Robinson~\cite{CalHanRob} and Wachs~\cite{Wachs} by specializing to 
$\vec{c} =(d,\ldots,d,d-1)$.

Taking a geometric viewpoint, one can consider $\Delta_n^\bullet$ as the barycentric subdivision 
of a distinguished facet of the standard $n$-simplex having 
vertices labeled with $\{1,2,\ldots, n,n+1\}$.  
As such, it carries an action of $\mathfrak S_n$ and is a balanced 
simplicial complex,  with each $\Delta_{\vec{c}}$ corresponding to a 
particular type-selected subcomplex.
Under this identification, the poset $\Pi^\bullet_{\vec{c}}$ 
corresponds to linear subspaces spanned by faces in $\Delta_{\vec{c}}$.  

We propose an analogous program for all well-generated complex reflection groups by introducing \emph{well-framed} and \emph{locally conical} 
systems.  We complete the program for all irreducible \emph{finite} groups having 
a presentation of the form
\begin{equation}\label{CoxeterShephard}
\langle r_1,\ldots, r_\ell\ \mid \ r_i^{p_i}=1,\quad \underbrace{r_ir_{j}r_i\cdots}_{q_{ij}}=\underbrace{r_{j}r_ir_{j}\cdots}_{q_{ij}}\quad\forall i,j\rangle
\end{equation}
with $p_i\geq 2$ for all $i$.  Each such group has an irreducible faithful representation as a complex reflection group.  The irreducible 
finite Coxeter groups are precisely those with each $p_i=2$, i.e., those with a real form.  
The remaining groups are  \emph{Shephard groups}, the 
symmetry groups of regular complex polytopes.\footnote{The algebraic unification of Coxeter groups and Shephard groups presented here does not appear to be widely known, and is 
attributed to Koster~\cite[p. 206]{OrlikSolomon}.}  The family of Coxeter and Shephard 
groups contains 21 of the 26 exceptional well-generated complex reflection groups.  
Using Shephard and Todd's numbering, the remaining five groups are $G_{24},G_{27},G_{29},G_{33},G_{34}$.

\tableofcontents

\section{Well-framed systems for complex reflection groups}\label{Section:Well-framed}
Let $V$ denote an $\ell$-dimensional $\mathbb C$-vector space.  A \emph{reflection} in 
$V$ is any non-identity element $g\in \GL(V)$ of finite order 
that fixes some hyperplane $H$, and   
a finite group $W\subset \GL(V)$ is called a \emph{reflection group} if 
it is generated by reflections.  Henceforth, we assume that $W$ acts irreducibly on $V$.  
Shephard and Todd gave a complete classification 
of all such groups in~\cite{ShephardTodd}.
  
Given a (finite) reflection group $W\subset \GL(V)$, we may choose a positive definite 
Hermitian form $\langle-,-\rangle$ on $V$ 
that is preserved by $W$, i.e.,  
\[\langle gx,gy\rangle=\langle x,y\rangle\]
for all $x,y\in V$ and $g\in W$.  We always regard $V$ as being 
endowed with such a form, which is unique up 
to positive real scalar when $W$ acts irreducibly on $V$.  We let $|-|$ 
denote the associated norm, defined by $|v|^2=\langle v,v\rangle$ for all $v\in V$.

As a special case of a complex reflection group, 
consider a finite group $W\subset \GL(\mathbb R^\ell)$ that is generated by 
reflections through hyperplanes.  By extending scalars, we consider $W$ as acting 
on $\mathbb C^\ell$, and regard $W$ as a reflection group.  
We will call a (complex) reflection group that arises in this way a 
\emph{(finite) real reflection group}.

A subgroup $W\subset \GL(V)$ naturally acts on the dual space $V^*$ via 
$gf(v)=f(g^{-1}v)$, and this action extends to the symmetric algebra $S=S(V^*)$.  
An important subalgebra of $S$ is \emph{the ring of invariants} $S^W$, whose 
structure actually characterizes reflection groups:

\begin{theorem}[Shephard-Todd, Chevalley]
A finite group $W\subset \GL(V)$ is a reflection group if and only if 
$S^W$ is a polynomial algebra $S^W=\mathbb C[f_1,\ldots,f_\ell]$.
\end{theorem}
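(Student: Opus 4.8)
The plan is to treat the two implications separately. The forward implication (reflection group $\Rightarrow$ polynomial invariants) is Chevalley's averaging argument; the converse is a Hilbert--series comparison that bootstraps off the forward implication and avoids the Shephard--Todd classification.

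\emph{Reflection group $\Rightarrow$ $S^W$ polynomial.} I would start from the standard ambient facts: since $W$ is finite, $S$ is a finitely generated module over $S^W$ (Noether), so $S^W$ is a finitely generated graded $\CC$-algebra of Krull dimension $\ell$. Let $\mathfrak h\subset S$ be the \emph{Hilbert ideal}, generated by all homogeneous invariants of positive degree, and fix a \emph{minimal} homogeneous generating set $f_1,\dots,f_m\in S^W_+$ of $\mathfrak h$. First I would check that $f_1,\dots,f_m$ generate $S^W$ as a $\CC$-algebra: given homogeneous $f\in S^W_+$, write $f=\sum_i a_if_i$ with $a_i\in S$ homogeneous of strictly smaller degree, then apply the Reynolds operator $\ast\colon S\to S^W$, $h\mapsto\frac1{|W|}\sum_{g\in W}g(h)$ (degree-preserving, $S^W$-linear) to obtain $f=\sum_i a_i^{\ast}f_i$ with $a_i^{\ast}\in S^W$ of smaller degree, and induct. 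The remaining claim is that $f_1,\dots,f_m$ are algebraically independent, whence $m=\ell$ (the transcendence degree of $S$, which is integral over $S^W$); this is the step where the reflection hypothesis is genuinely used, and I expect it to be the main obstacle.

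The key to independence is the lemma: \emph{if $f_1,\dots,f_m\in S^W_+$ are homogeneous with $f_1\notin(f_2,\dots,f_m)S^W$ and $g_1,\dots,g_m\in S$ are homogeneous with $\sum_i g_if_i=0$, then $g_1\in\mathfrak h$.} I would prove this by induction on $\deg g_1$, the base case $\deg g_1=0$ following from $\ast$. For the inductive step: for a reflection $t\in W$ with reflecting hyperplane $\ker\alpha_t$ ($\alpha_t\in V^*$ linear), every $h\in S$ satisfies $h-t(h)=\alpha_t\cdot\Delta_t(h)$ with $\Delta_t(h)$ homogeneous of degree $\deg h-1$, because $h-t(h)$ vanishes on that hyperplane; applying $t$ to the relation and using $t(f_i)=f_i$ gives $\alpha_t\sum_i\Delta_t(g_i)f_i=0$, hence $\sum_i\Delta_t(g_i)f_i=0$, so by induction $\Delta_t(g_1)\in\mathfrak h$, i.e.\ $g_1-t(g_1)\in\mathfrak h$. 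Since $W$ is generated by reflections, telescoping gives $g_1-w(g_1)\in\mathfrak h$ for all $w\in W$; averaging gives $g_1-g_1^{\ast}\in\mathfrak h$, and $g_1^{\ast}\in\mathfrak h$ because $\deg g_1>0$, so $g_1\in\mathfrak h$. To deduce independence, suppose $P\neq0$ is a relation $P(f_1,\dots,f_m)=0$ of minimal weighted degree ($\deg y_i:=\deg f_i$), and set $p_i:=(\partial P/\partial y_i)(f_1,\dots,f_m)\in S^W$; these are not all zero (else a lower-degree relation results), so after renumbering $p_1,\dots,p_k$ minimally generate the ideal they span in $S^W$, with $p_j=\sum_{i\le k}h_{ij}p_i$ ($h_{ij}\in S^W$) for $j>k$. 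Differentiating the relation by each coordinate $x_s$ on $V$ gives $\sum_{i\le k}p_i\bigl(\partial f_i/\partial x_s+\sum_{j>k}h_{ij}\,\partial f_j/\partial x_s\bigr)=0$; the lemma forces $\partial f_1/\partial x_s+\sum_{j>k}h_{1j}\,\partial f_j/\partial x_s\in\mathfrak h$ for every $s$, and multiplying by $x_s$, summing over $s$, and applying Euler's identity then yields $f_1\in(f_2,\dots,f_m)S$, contradicting minimality of $f_1,\dots,f_m$.

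\emph{$S^W$ polynomial $\Rightarrow$ reflection group.} Let $W'\trianglelefteq W$ be the subgroup generated by all reflections in $W$. By the (already established) forward implication, $S^{W'}$ is polynomial, say $S^{W'}=\CC[g_1,\dots,g_\ell]$ with $\deg g_i=e_i$; write $S^W=\CC[f_1,\dots,f_\ell]$ with $\deg f_i=d_i$. Comparing $\Hilb(S^W,t)=\prod_i(1-t^{d_i})^{-1}$ with Molien's formula $\frac1{|W|}\sum_{w\in W}\det(1-tw)^{-1}$ in the two leading orders at $t=1$ (only $w=1$ gives a pole of order $\ell$, only reflections give poles of order $\ell-1$) yields $\prod_i d_i=|W|$ and $\sum_i(d_i-1)=$ the number of reflections of $W$, and similarly for $W'$; since $W$ and $W'$ contain exactly the same reflections, $\sum_i(d_i-1)=\sum_i(e_i-1)$. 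Next, $S^{W'}$ is a maximal Cohen--Macaulay module over the regular (polynomial) ring $S^W$, hence free by graded Auslander--Buchsbaum, of rank $[W:W']$, so $Q(t):=\prod_i(1-t^{d_i})/(1-t^{e_i})=\Hilb(S^{W'},t)/\Hilb(S^W,t)$ lies in $\NN[t]$ with $Q(1)=[W:W']$. A short computation with $\log Q(t)=\sum_i\bigl(\log(1-t^{d_i})-\log(1-t^{e_i})\bigr)$ near $t=1$ shows $Q'(1)/Q(1)=0$ precisely because $\sum_i(d_i-1)=\sum_i(e_i-1)$; since $Q$ has non-negative coefficients and $Q(1)\ge1$, this forces $Q$ to be a nonzero constant, which must be $1$ since $Q(0)=1$. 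Hence $[W:W']=Q(1)=1$, i.e.\ $W=W'$ is generated by reflections. The delicate points in this half are the freeness of $S^{W'}$ over $S^W$ and the bookkeeping in the Molien expansion; everything after that is formal.
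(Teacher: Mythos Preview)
Your proof is correct, but there is nothing to compare it against: the paper does not prove this theorem. It is stated as classical background (hence the attribution ``Shephard--Todd, Chevalley'') and used without proof. What you have written is essentially the standard textbook argument---Chevalley's averaging/Demazure-operator proof for the forward implication, and the Hilbert-series/Molien argument via the normal reflection subgroup $W'$ for the converse---as found for instance in Bourbaki or Humphreys. Both halves are sound; your ``delicate points'' (freeness of $S^{W'}$ over the regular ring $S^W$ via graded Auslander--Buchsbaum, and extracting $\prod_i d_i=|W|$ and $\sum_i(d_i-1)=\#\{\text{reflections}\}$ from the first two Laurent coefficients of Molien's series at $t=1$) are exactly the places where care is needed, and you handle them correctly.
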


When $W$ is a reflection group, such generators $f_1,\ldots, f_\ell$ of 
algebraically independent homogeneous polynomials for $S^W$ are not unique, 
but we do have uniqueness for the corresponding \emph{degrees} $d_1\leq\cdots\leq d_\ell$.

It is well-known that the minimum number of reflections required to generate $W$ is either 
$\dim V$ or $\dim V+1$.  If there exists such a generating set $R$ with $|R|=\dim V$, we say that $W$ is \emph{well-generated} and 
that $(W,R)$ is a \emph{well-generated system}.  (Finite) real reflection groups form an important class of 
well-generated reflection groups.  Another important family consists of symmetry groups 
of \emph{regular complex polytopes}; these are known as \emph{Shephard groups}, and 
were extensively studied by both Shephard~\cite{Shephard} and Coxeter~\cite{Coxeter}.

The (complex) reflecting hyperplanes of a real reflection group $W$ 
intersect the embedded real sphere
\[
\mathbb S^{\ell-1}=\{x\in \mathbb R^\ell\subset \mathbb C^\ell\ :\ |x|=1\}
\]
to form the \emph{Coxeter complex}, a simplicial complex with many wonderful properties.  
We call the maximal simplices of a Coxeter complex \emph{chambers}.  The real 
cone $\mathbb R_{\geq 0}C$ over a chamber $C$ is called a \emph{(closed) Weyl chamber}, and 
one nice feature of the Coxeter complex is its algebraic description when $(W,R)$ is 
a \emph{simple system}, i.e., when $R$ is the set of reflections through walls of a 
Weyl chamber.  
In this case, 
the poset of faces 
for the complex has the alternate description~\cite{Brown} as the poset of parabolic cosets 
\[\Delta(W,R)=\{gW_J\ :\ g\in W, J\subseteq R\},\]
ordered by reverse inclusion, i.e., 
\[gW_J<g'W_{J'}\quad \text{if}\quad gW_J\supset g'W_{J'}.\]

Naturally, one can define such a poset $\Delta(W,R)$ for an arbitrary group 
$W$ with set of \emph{distinguished generators} $R$.  In~\cite{BabsonReiner}, 
Babson and Reiner show that 
the geometry of this general construction is still well-behaved when $R$ is finite and minimal 
with respect to inclusion.  In particular, they show that such posets $\Delta(W,R)$ are 
\emph{simplicial posets}, meaning that every lower interval is isomorphic to a Boolean algebra.  
In fact, each 
$\Delta(W,R)$ is \emph{pure} of dimension $|R|-1$ and \emph{balanced}.  In other words, 
there is a coloring of the atoms using 
$|R|$ colors so that 
each maximal element lies above exactly one atom of each color.  The natural coloring given by
\begin{align*}
gW_{R\diff \{r_i\}}&\longmapsto \{r_i\}
\intertext{extends to a \emph{type function}}
\type:\Delta(W,R)&\longrightarrow \{\text{subsets of $R$}\}\\
gW_{R\diff J}&\longmapsto J.
\end{align*}
Recall that when such a coloring is present, we can select subposets by 
restricting to particular colors.  
Precisely, from each subset $T\subseteq R$, we obtain the following subposet 
\emph{selected by $T$}:
\begin{align*}
\Delta_T(W,R) &\Def \{\sigma\in \Delta(W,R)\ :\ \type(\sigma)\subseteq T\}\\
&\stackrel{\phantom{\text{Def}}}{=}\{g W_{R\diff J}\ :\ J\subseteq T\}.
\end{align*}
We will often write $\Delta$ in place of $\Delta(W,R)$, and $\Delta_T$ in place of 
$\Delta_T(W,R)$.

When $W$ is a reflection group, the lattice of intersections of reflecting hyperplanes 
for $W$ under reverse inclusion is denoted $\L_W$, or simply $\L$.  It is a 
subposet of the lattice $\L(V)$ 
of all $\mathbb C$-linear subspaces of $V$ ordered by reverse inclusion.  For 
a subset $A\subseteq V$, we define
\begin{align*}
\LinSpan(A)&\Def \text{minimal $\mathbb C$-linear subspace of $V$ containing $A$}.\\
\AffSpan(A)&\Def \text{minimal $\mathbb C$-linear affine space of $V$ containing $A$}.\\
\Hull(A)&\Def \left\{\sum_{i=1}^m t_i a_i\mid a_i\in A,\ t_i\geq 0,\ \sum_{i=1}^m t_i=1\right\}.
\end{align*}
These are the only notions of span and hull that will appear in this paper.

The following definitions aim to extend the relation between 
$\L$ and $\Delta(W,R)$ for simple systems of real reflection groups to well-generated 
systems for complex reflection groups.

\begin{definition}  
For $(W,R)$ a well-generated system, a \emph{frame} 
\[\Lambda=\{\lambda_1,\lambda_2,\ldots,\lambda_\ell\}\] 
is a collection of nonzero vectors with
\[\lambda_i\in H_1\cap \cdots\cap\hat{H_i}\cap\cdots\cap H_\ell\quad\text{for}\quad 
1\leq i\leq \ell.\]
Here, $H_i$ is the reflecting hyperplane for reflection $r_i\in R$.  We say that 
$(W,R,\Lambda)$ is a \emph{framed system}.
\end{definition}

We will sometimes index a generating set $R$ and frame $\Lambda$ 
with $\{0,1,\ldots, \ell-1\}$ instead of $\{1,2\ldots, \ell\}$, 
writing $R=\{r_0,r_1,\ldots, r_{\ell-1}\}$ and 
$\Lambda=\{\lambda_0,\lambda_1,\ldots, \lambda_{\ell-1}\}$.  This is usually done 
when $W$ is the symmetry group of a regular polytope and $\Lambda$ is chosen 
from the vertices of the barycentric subdivision 
in such a way that each $\lambda_i$ corresponds to an $i$-dimensional face of the polytope; 
see Figures~\ref{Fig:Our},~\ref{Fig:CubeExample}, and Section~\ref{Background}.

Before stating the next definition, we make precise what is meant by 
(piecewise) $\mathbb R$-linearly extending a map 
$F:{\text{Vert}}(\Delta)\to V$ on vertices of a simplicial poset $\Delta$.  
Identifying $\Delta$ with a $CW$-complex as in~\cite{Bjorner:Top}, 
each point $y\in \Delta$ is contained in a unique (open) cell $\sigma$.  
Because $\Delta$ is a simplicial poset, the cell 
admits a characteristic map $f$ that maps the standard 
$(\dim \sigma)$-simplex onto $\overline{\sigma}$ 
while restricting 
to a bijection between vertices ($0$-cells).  
Thus, there are unique scalars $c_v$ so that
\[f^{-1}(y)=\sum_{v\in\text{Vert}(\overline{\sigma})} c_vf^{-1}(v),
\quad \sum_{v\in\text{Vert}(\overline{\sigma})} c_v =1,\quad\text{and}\quad c_v\in\mathbb R_{\geq 0}\text{ for all $v$}.\]
We extend $F$ to all of $\Delta$ by defining 
\[F(y)=\sum_{v\in\text{Vert}(\overline{\sigma})} c_v F(v).\]
We can now state the main definitions of this paper.

\begin{definition}\label{Def:Well-Framed}
Let $(W,R,\Lambda)$ be a framed system.  
Define $\rho:||\Delta(W,R)||\longrightarrow V$ by the following map on vertices
\begin{align*}
gW_{R\diff \{r_i\}}
&\stackrel{\rho}{\longmapsto}g\lambda_i
\intertext{(which is well-defined because the subgroup $W_{R\diff \{r_i\}}$ fixes $\lambda_i$)
and then extending $\mathbb R$-linearly over each face $gW_{R\diff J}$ of $\Delta$, so} 
 gW_{R\diff J}&\stackrel{\rho}{\longmapsto} g\Lambda_J,
\end{align*}
where 
\[\Lambda_J:=\Hull(\{\lambda_i\ :\ r_i\in J\}).\]
\begin{itemize}
\item We say that $(W,R,\Lambda)$ is \emph{well-framed} 
 if the equivariant map $\rho$ is an embedding.
\item If, in addition, each $X\in \L_W$ contains the image of at least 
one $(\dim X-1)$-face under 
$\rho$, then we say that 
 $(W,R,\Lambda)$ is \emph{strongly stratified}.
\end{itemize} 
\end{definition}

\begin{convention}\label{Convention}  
Given a well-framed system $(W,R,\Lambda)$, 
we will often identify the abstract simplicial poset $\Delta(W,R)$ and the 
geometric realization $\rho(\Delta(W,R))$ 
afforded by $\rho$.
\end{convention}

Observe that 
a system $(W,R,\Lambda)$ is well-framed if and only if for any positive real constants 
$c_1,c_2,\ldots,c_\ell$, the system $(W,R,\{c_1\lambda_1,c_2\lambda_2,\ldots,c_\ell\lambda_\ell\})$ is well-framed.

\begin{example}\label{Example:Coxeter}
Let $(W,R)$ be a simple system, i.e., an irreducible finite real reflection group $W$ with 
$R$ the set of reflections through 
walls of a Weyl chamber.  If $W$ is a Weyl group, 
one can obtain a real frame by choosing $\Lambda$ to be the set of fundamental dominant weights.  
More generally, a real frame is obtained by choosing one nonzero point on 
each extreme ray of a Weyl chamber corresponding to $R$.  
The resulting system $(W,R,\Lambda)$ is strongly stratified and 
$\rho(\Delta(W,R))$ is homeomorphic to the Coxeter complex via radial projection; Figure~\ref{Fig:Embedding} 
illustrates the construction for $W=I_2(5)$, the dihedral group of order 10.
\end{example}
\begin{center}
\begin{figure}[hbt]
\includegraphics{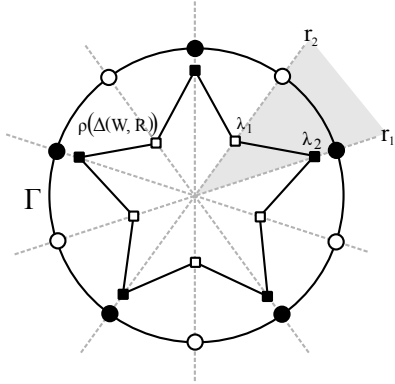}
\caption{A well-framed system and shaded Weyl chamber for $I_2(5)$.}\label{Fig:Embedding}
\end{figure}
\end{center}

\begin{definition}
We say that $(W,R)$ is \emph{well-framed} or \emph{strongly stratified} 
if there exists a frame $\Lambda$ for which $(W,R,\Lambda)$ is 
well-framed or strongly stratified, respectively.
\end{definition}

For $W$ a real reflection group, 
the following example suggests that only simple systems $(W,R)$ can be 
well-framed by a real frame $\Lambda\subset \mathbb R^\ell$.  In other words, the well-framed 
systems $(W,R,\Lambda)$ with $\Lambda$ real and $W$ a (complexified) 
finite real reflection group are completely characterized in Example~\ref{Example:Coxeter}; see 
Corollary~\ref{Corollary:Coxeter} below.

\begin{example}\label{NonEmbedding}  
Let $W=I_2(5)$, the dihedral group of order $10$, and let $R=\{r_1,r_2\}$, where $r_1,r_2$ 
are the reflections 
indicated in Figure~\ref{Fig:NonEmbedding}.  
In this case, $(W,R)$ is not a simple system, as the corresponding hyperplanes 
$H_1,H_2$ do not form a Weyl chamber; see the shaded region.    
Two real frames are shown in the figure, one with $|\lambda_1|\neq |\lambda_2|$ (left) and 
one with $|\lambda_1|=|\lambda_2|$ (right).  Both 
fail to yield a well-framed system.  For example, on the right in Figure~\ref{Fig:NonEmbedding}, when 
$|\lambda_1|=|\lambda_2|$, the map 
$||\Delta(W,R)||\stackrel{\rho}{\rightarrow}\rho(\Delta)$ is a double covering.
\begin{center}
\begin{figure}[hbt]
\includegraphics{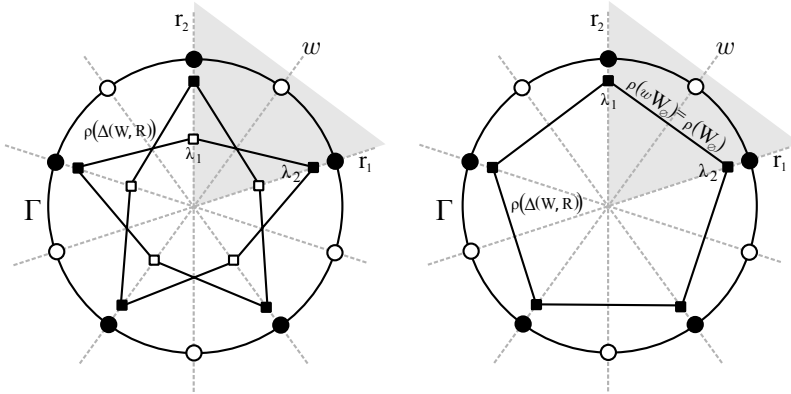}
\caption{Systems $(W,R,\Lambda)$ that are not well-framed for $W=I_2(5)$, $R=\{r_1,r_2\}$, and 
real $\Lambda=\{\lambda_1,\lambda_2\}$.}\label{Fig:NonEmbedding}
\end{figure}
\end{center}
\end{example}

Comparing Figures~\ref{Fig:Embedding} and~\ref{Fig:NonEmbedding}, we see that for a 
fixed $W$, it is possible to have both good and bad choices for $R$ and $\Lambda$.  Also 
observe that $(W,R,\Lambda)$ being well-framed is a global property, not a local one.  
For example, 
Figure~\ref{Fig:NonEmbedding} shows that for $\ell(w)$ large, 
one has to check for intersections of the simplices $\rho(eW_\varnothing)$ and $\rho(wW_\varnothing)$.

Though some systems $(W,R)$ do not give a well-framed triple $(W,R,\Lambda)$ for any \emph{real} $\Lambda\subset\mathbb R^\ell$, one may still 
be able to choose a good frame $\Lambda\subset \mathbb C^\ell$, as in the following example.

\begin{example}\label{TriangleExample}  
Let $W=I_2(3)$ and let $R=\{r_1,r_2\}$, where $r_1,r_2$ and $r_3$ are the reflections indicated in 
Figure~\ref{Fig:TriangleExample}.  
As in Example~\ref{NonEmbedding}, $(W,R)$ is not a simple system because 
the corresponding hyperplanes $H_1,H_2$ 
do not form a Weyl chamber.  If $\lambda_1,\lambda_2$ are real and as shown in 
Figure~\ref{Fig:TriangleExample}, the triple $(W,R,\Lambda)$ is not well-framed.  In fact, for 
every choice of a real frame $\Lambda\subset\mathbb R^2$, the resulting system $(W,R,\Lambda)$ 
is not well-framed.
However, it is still possible to construct a well-framed system from $(W,R)$ using 
$\Lambda\subset \mathbb C^2$.

Let $H_1,H_2,H_3$ be the (complex) reflecting hyperplanes for $r_1,r_2,r_3$, respectively, and 
choose coordinates so that
\[H_1=\mathbb C\left[\begin{matrix}\sqrt{3}\\ -1\end{matrix}\right],\quad
 H_2=\mathbb C\left[\begin{matrix}0\\ 1\end{matrix}\right],\quad
H_3=\mathbb C\left[\begin{matrix}\sqrt{3}\\ 1\end{matrix}\right].\]

Let
\[\lambda_2=\frac{1}{4}\left[\begin{matrix}\sqrt{3}\\-1\end{matrix}\right]
\quad\text{and}\quad
\lambda_1=i\left[\begin{matrix} 0 \\ 1\end{matrix}\right]
.\]
Then it is straightforward to verify that $(W,R,\Lambda)$ is a well-framed system.  
For example, the two segments 
\begin{align*}
\sigma_1&=\Lambda_R=\left\{ti\left[\begin{matrix} 0 \\ 1\end{matrix}\right]
+(1-t)\frac{1}{4}\left[\begin{matrix}\sqrt{3}\\ -1\end{matrix}\right]\ :\ 0\leq t\leq 1\right\}\\
\intertext{and}
\sigma_2&=w\sigma_1=\left\{s\frac{i}{2}\left[\begin{matrix} \sqrt{3} \\ -1\end{matrix}\right]
+(1-s)\frac{1}{2}\left[\begin{matrix} 0 \\ 1\end{matrix}\right]
\ :\ 0\leq s\leq 1\right\}
\end{align*}
do not intersect, since they 
have distinct endpoints and there is no real solution to 
\[(1-t)\frac{\sqrt{3}}{4}=si\frac{\sqrt{3}}{2}\qquad 0< s,t< 1.\]

Note also that the linear form 
$\alpha_{H_3}:=x_1-\sqrt{3}x_2$
is nonzero on $\sigma_1$ and $\sigma_2$, implying that $H_3$ neither intersect $\sigma_1$ nor 
$\sigma_2$.
\begin{center}
\begin{figure}[hbt]
\includegraphics{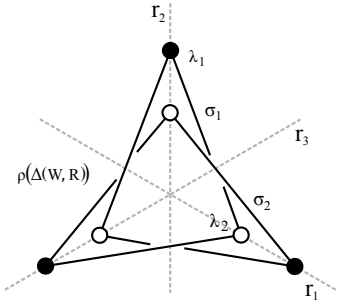}
\caption{System $(W,R,\Lambda)$ with $W=I_2(3)$ and 
$R=\{r_1,r_2\}$.  The system $(W,R)$ is well-framed for some nonreal 
$\Lambda=\{\lambda_1,\lambda_2\}\subset \mathbb C$, but 
not for any real $\Lambda\subset \mathbb R$.}\label{Fig:TriangleExample}
\end{figure}
\end{center}
\end{example} 

\section{Geometry and algebra for frames}\label{geometry}

Though $\Delta(W,R)$ is generally only a Boolean complex, existence of a well-framed system 
$(W,R,\Lambda)$ forces it to be a simplicial complex:

\begin{proposition}\label{Proposition:Simplicial}
If $(W,R,\Lambda)$ is well-framed, then $\Delta:=\Delta(W,R)$ is a balanced simplicial complex.
\end{proposition}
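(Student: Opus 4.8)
The plan is to use the theorem of Babson and Reiner quoted above: $\Delta := \Delta(W,R)$ is already a pure, balanced \emph{simplicial poset}, so the only thing left to establish is that it is in fact a simplicial \emph{complex}, i.e.\ that a face of $\Delta$ is determined by its set of vertices. Recall that a simplicial poset is (isomorphic to) a simplicial complex precisely when the map $\sigma \mapsto \mathrm{Vert}(\sigma)$ from faces to subsets of the atom set is injective: since each lower interval $[\hat 0,\sigma]$ is a Boolean algebra, this map automatically has image closed under passing to subsets, and under injectivity it becomes a poset isomorphism onto a downward-closed family of finite vertex subsets. Once this is done, purity, balancedness, and the type function all transport verbatim from the simplicial-poset statement, so $\Delta$ is a balanced simplicial complex.

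So I would reduce to showing: if $\sigma,\tau$ are faces with $\mathrm{Vert}(\sigma) = \mathrm{Vert}(\tau)$, then $\sigma = \tau$. Here I would invoke the hypothesis that $\rho$ is an embedding. The vertices lying below $\sigma = gW_{R\setminus J}$ are exactly the atoms $gW_{R\setminus\{r_i\}}$ with $r_i\in J$, and $\rho$ sends these to $g\lambda_i$; since $\rho$ was defined by $\mathbb R$-linear extension over each face, the closed cell satisfies $\rho(\overline\sigma) = g\Lambda_J = \Hull\{g\lambda_i : r_i\in J\} = \Hull\bigl(\rho(\mathrm{Vert}(\sigma))\bigr)$. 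Hence $\mathrm{Vert}(\sigma) = \mathrm{Vert}(\tau)$ forces $\rho(\overline\sigma) = \rho(\overline\tau)$. Now pick a point $y$ in the open cell $\sigma$; then $\rho(y)\in\rho(\overline\tau)$, so injectivity of $\rho$ gives $y\in\overline\tau$. Since $\overline\tau$ is the disjoint union of the open cells indexed by the faces $\tau'\le\tau$, and $y$ lies in the open cell $\sigma$, we get $\sigma\le\tau$; by symmetry $\tau\le\sigma$, hence $\sigma=\tau$. (In passing, the same argument shows each set $\{g\lambda_i : r_i\in J\}$ is affinely independent, so the cells are honest geometric simplices, but this is not needed for the statement.)

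The step that requires care --- and is really the only content beyond bookkeeping --- is the last one: it is crucial that $\rho$ is an embedding of the \emph{entire} realization $\|\Delta\|$, not merely a map that happens to be affine and injective on each closed cell separately. The latter is automatic and would not exclude the usual ``two edges sharing both endpoints'' pathology that distinguishes simplicial posets from simplicial complexes; it is the global injectivity of $\rho$, together with the CW/cell structure on $\|\Delta\|$ recalled before Definition~\ref{Def:Well-Framed}, that lets one pass from ``$\rho(y)$ lies in $\rho(\overline\tau)$'' to ``$y$ lies in $\overline\tau$'' and hence to a face relation $\sigma \le \tau$. Once this is in hand there is nothing further to compute.
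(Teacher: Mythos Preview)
Your proposal is correct and follows the same line as the paper's proof: a face's image under $\rho$ is determined by the images of its vertices, so global injectivity of $\rho$ forces faces with the same vertex set to coincide. The paper compresses this into two sentences, whereas you have helpfully unpacked the CW-complex bookkeeping (the passage from $\rho(y)\in\rho(\overline\tau)$ to $y\in\overline\tau$ to $\sigma\le\tau$) and flagged explicitly that it is the \emph{global} injectivity of $\rho$ on $\|\Delta\|$, not merely cell-by-cell injectivity, that does the work.
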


\begin{proof}
The image of a face under $\rho$ is determined by its vertices.  Because 
$\rho$ is assumed to be an embedding, it follows that any two faces of $\Delta$ with 
equal vertex sets must be the same face.
\end{proof}

As a corollary, we see that every well-framed system $(W,R,\Lambda)$ with $W$ a real 
reflection group and $\Lambda\subset \mathbb R^\ell$ is of the type constructed in Example~\ref{Example:Coxeter}.  
We make this precise in the following corollary, whose proof is straightforward and left to the reader.

\begin{corollary}\label{Corollary:Coxeter}
Let $(W,R)$ be a well-generated system with $W$ a (finite) real reflection group.  Assume 
that $\Lambda\subset \mathbb R^\ell$ is a real frame.  
Then $(W,R,\Lambda)$ is well-framed if and only if 
\[\mathbb R_{\geq 0}\lambda_1,\ldots,\mathbb R_{\geq 0}\lambda_\ell\]
are the extreme rays of a Weyl chamber. 
Moreover, if $(W,R,\Lambda)$ is well-framed, then $(W,R)$ is a simple system.
\end{corollary}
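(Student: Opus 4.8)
The plan is to prove both directions of the equivalence, and then observe that the "moreover" clause follows immediately from the nontrivial direction together with Proposition~\ref{Proposition:Simplicial} and a counting argument. Throughout we use that $W$ acts on the real sphere $\mathbb S^{\ell-1}$, that the reflecting hyperplanes cut it into the Coxeter complex, and that $\rho$ is $W$-equivariant; by the rescaling remark following Convention~\ref{Convention} we may normalize the $\lambda_i$ however is convenient.

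\smallskip
\noindent\emph{Sufficiency.}  Suppose $\mathbb R_{\geq 0}\lambda_1,\ldots,\mathbb R_{\geq 0}\lambda_\ell$ are the extreme rays of a Weyl chamber $\overline C$.  Then $R$ is (up to the standard dictionary) the set of reflections through the walls of $\overline C$, so $(W,R)$ is a simple system, and this is precisely the situation of Example~\ref{Example:Coxeter}: normalizing each $\lambda_i$ to lie on $\mathbb S^{\ell-1}$, radial projection $v\mapsto v/|v|$ carries $\rho(\Delta(W,R))$ homeomorphically onto the Coxeter complex, the map $gW_{R\setminus J}\mapsto$ (the corresponding face of the Coxeter complex) being a bijection on vertices that respects the simplicial structure.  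In particular $\rho$ is injective, hence an embedding since $\Delta$ is a finite complex, so $(W,R,\Lambda)$ is well-framed.

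\smallskip
\noindent\emph{Necessity.}  Conversely assume $(W,R,\Lambda)$ is well-framed with $\Lambda\subset\mathbb R^\ell$.  First I would show $\lambda_i\ne 0$ and that no $\lambda_i$ lies on any reflecting hyperplane other than the $H_j$ forced by the frame axiom: if $\lambda_i$ lay on an extra hyperplane $H$, the reflection through $H$ would fix $\rho(eW_{R\setminus\{r_i\}})=\lambda_i$ while moving the facet $\rho(eW_\varnothing)$ to a distinct facet sharing that vertex, and by chaining such reflections (as in Example~\ref{NonEmbedding}, where one gets a double cover) one produces two distinct faces of $\Delta$ with the same $\rho$-image, contradicting injectivity.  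Hence each $\lambda_i$ lies in the interior of a unique closed chamber; after replacing $\Lambda$ by a $W$-translate I may assume $\lambda_1$ lies in the fixed chamber $\overline C$ whose walls give the simple system, with the other $\lambda_i$ possibly elsewhere.  The key point is then that the $\ell$ rays $\mathbb R_{\geq 0}\lambda_i$ must be the extreme rays of a single common chamber: the simplex $\rho(eW_\varnothing)=\Hull(\lambda_1,\ldots,\lambda_\ell)$ is embedded and, together with its $W$-translates, tiles its $W$-orbit; if two of the $\lambda_i$ lay in the interiors of different chambers, then the segment between them would cross a reflecting hyperplane $H$, and reflecting the portion of $\rho(eW_\varnothing)$ on the far side of $H$ through $H$ would force an overlap between $\rho(eW_\varnothing)$ and $\rho(sW_\varnothing)$ (with $s$ the reflection in $H$), contradicting that $\rho$ is an embedding — this is exactly the global obstruction illustrated in Figure~\ref{Fig:NonEmbedding}.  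So all $\lambda_i$ lie in one closed chamber $\overline C'$; since they are $\ell$ vectors spanning $V$ (the frame axiom forces $\lambda_i\notin H_i$, so they are linearly independent) and $\overline C'$ is a simplicial cone with exactly $\ell$ extreme rays, the $\mathbb R_{\geq 0}\lambda_i$ are precisely those extreme rays.

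\smallskip
\noindent\emph{The "moreover" clause.}  If $(W,R,\Lambda)$ is well-framed with real $\Lambda$, the necessity argument shows the $\mathbb R_{\geq 0}\lambda_i$ are the extreme rays of a Weyl chamber $\overline C'$, and the reflections $r_i\in R$ fix the walls $H_i=\mathrm{Span}(\{\lambda_j: j\ne i\})$ of $\overline C'$; thus $R$ is the set of reflections through the walls of $\overline C'$, i.e.\ $(W,R)$ is a simple system.  The step I expect to be the main obstacle is the global crossing argument in the necessity direction: turning the pictorial "the two facets would overlap" into a clean proof requires knowing that $\rho$ restricted to each facet is an affine isomorphism onto a genuine geometric simplex and that the images of facets can only meet along shared subfaces — facts that follow from $\rho$ being an embedding of the balanced simplicial complex $\Delta$ (Proposition~\ref{Proposition:Simplicial}), but which must be combined carefully with the reflection-group geometry of $\mathbb R^\ell$ so that a hyperplane separating two vertices of a facet genuinely produces a fold. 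Since the corollary is flagged as "straightforward and left to the reader," I would in fact present only this necessity sketch and cite Example~\ref{Example:Coxeter} for sufficiency.
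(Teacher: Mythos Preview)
The paper leaves this proof to the reader, so there is no argument to compare against; I will assess correctness. Your sufficiency direction and the ``moreover'' clause are fine, but the necessity direction contains two false intermediate claims and one non-sequitur.

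First, the assertion that ``no $\lambda_i$ lies on any reflecting hyperplane other than the $H_j$ forced by the frame axiom'' fails already for the standard simple system of $\mathfrak S_4$: with $R=\{s_1,s_2,s_3\}$ the adjacent transpositions, the line $H_1\cap H_2$ also lies in the hyperplane of $(1\,3)=s_1s_2s_1$, so $\lambda_3$ sits on three reflecting hyperplanes, not two. Consequently your next step, that each $\lambda_i$ lies in the \emph{interior} of a closed chamber, is also false: for $\ell\ge 2$ every $\lambda_i$ lies on at least $\ell-1$ hyperplanes and hence on chamber boundaries. What injectivity of $\rho$ on vertices actually gives is $\Stab_W(\lambda_i)=W_{R\setminus\{r_i\}}$, a parabolic of rank $\ell-1$; this is the correct replacement. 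Second, your final step is a non-sequitur: $\ell$ linearly independent vectors inside a simplicial cone with $\ell$ extreme rays need not lie on those rays (take $(1,1)$ and $(2,1)$ in the first quadrant). The missing ingredient is precisely the stabilizer computation above: once all $\lambda_i$ lie in a single closed chamber $\overline{C'}$, the standard fact that $\Stab_W(p)$ for $p\in\overline{C'}$ is generated by reflections in the walls of $\overline{C'}$ through $p$ forces $\ell-1$ walls through each $\lambda_i$, hence each $\lambda_i$ on an extreme ray. Your folding idea for getting them into one chamber is sound but should be phrased for the facet rather than the vertices: if the relative interior of $\sigma=\Hull(\lambda_1,\ldots,\lambda_\ell)$ met a reflecting hyperplane $H$, then $\sigma$ and $s_H\sigma$ would share an interior point lying on no common face of $\Delta$, contradicting injectivity of $\rho$; hence the relative interior of $\sigma$ lies in a single open chamber and $\sigma\subset\overline{C'}$.
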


We now come to the notion of \emph{support}.  Note that in the following definition, 
we could very well replace $\Supp$ with $\LinSpan$, given Convention~\ref{Convention}.  However, 
it will be helpful to distinguish between the two.

\begin{definition}
Let $(W,R,\Lambda)$ be well-framed.  We define the \emph{support} map
\begin{alignat*}{1}
\Supp :  \Delta  &\longrightarrow \L(V)\\
gW_J  &\longmapsto  \LinSpan(\rho(gW_J)),
\end{alignat*}
and let
\[\Pi_W\Def\{\Supp(\sigma)\mid\sigma\in \Delta\},\]
viewed as a subposet of $\L(V)$. 
\end{definition}

As for $\L_W$, we will often write $\Pi$ in place of $\Pi_W$.  
We start by observing that for a well-framed system, $\Supp:\Delta_W\to \L_W$.

\begin{proposition}\label{Pi:L}
For a well framed system $(W,R,\Lambda)$, we have 
\[\Pi_W\subseteq \L_W,\]
with equality if and only if $(W,R,\Lambda)$ is strongly stratified.
\end{proposition}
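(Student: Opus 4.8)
The plan is to prove the two inclusions $\Pi_W\subseteq\L_W$ and (under strong stratification) $\L_W\subseteq\Pi_W$ separately, and then argue the converse of the second. For the first inclusion, take a face $\sigma=gW_J$ of $\Delta$. Since $\rho$ is equivariant, $\rho(\sigma)=g\Lambda_J=g\,\Hull(\{\lambda_i:r_i\in J\})$, so $\Supp(\sigma)=g\cdot\LinSpan(\{\lambda_i:r_i\in J\})$. Because $\L_W$ is $W$-stable, it suffices to show $\LinSpan(\{\lambda_i:r_i\in J\})\in\L_W$. Here I would use the frame condition: $\lambda_i\in\bigcap_{k\neq i}H_k$, so each $\lambda_i$ with $r_i\in J$ lies in $\bigcap_{r_k\in R\setminus J}H_k$, whence $\LinSpan(\{\lambda_i:r_i\in J\})\subseteq X_J:=\bigcap_{r_k\in R\setminus J}H_k$. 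The key dimension count is that $X_J$ has dimension at most $|J|$ (it is cut out by $|R\setminus J|=\ell-|J|$ hyperplanes), while $\{\lambda_i:r_i\in J\}$ spans a space of dimension exactly $|J|$ — the latter because $\rho$ restricted to the single face $eW_J$ is injective on vertices and affine-linear, forcing the $\lambda_i$, $r_i\in J$, to be affinely independent and (since $0$ is not in their affine span, as $X_J\cap(\text{that span})$ would otherwise drop rank) linearly independent. Hence $\LinSpan(\{\lambda_i:r_i\in J\})=X_J$, and $X_J$ is an intersection of reflecting hyperplanes, so it lies in $\L_W$. This gives $\Pi_W\subseteq\L_W$.

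For the forward direction of the equivalence, suppose $(W,R,\Lambda)$ is strongly stratified. By definition, every $X\in\L_W$ contains the image under $\rho$ of some $(\dim X-1)$-face $\sigma$. Then $\Supp(\sigma)=\LinSpan(\rho(\sigma))$ is a subspace of $X$; but $\sigma$ has dimension $\dim X-1$, so $\rho(\sigma)$ is the convex hull of $\dim X$ points, and by the linear-independence argument above (applied to a single face, where $\rho$ is an embedding) $\LinSpan(\rho(\sigma))$ has dimension exactly $\dim X$. A subspace of $X$ of the same dimension as $X$ equals $X$, so $X=\Supp(\sigma)\in\Pi_W$. Thus $\L_W\subseteq\Pi_W$, and combined with the first part, $\Pi_W=\L_W$. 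Conversely, if $\Pi_W=\L_W$, then each $X\in\L_W$ equals $\Supp(\sigma)$ for some face $\sigma=gW_J$; then $\dim\rho(\sigma)=\dim X$ forces $|J|=\dim X$ (again by the independence count), so $\sigma$ is a $(\dim X-1)$-face whose image lies in $\LinSpan(\rho(\sigma))=X$, verifying strong stratification.

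The main obstacle — and the one place where I would be most careful — is the dimension bookkeeping: establishing that for a single face $gW_J$, the points $\{g\lambda_i:r_i\in J\}$ are genuinely linearly independent (not merely affinely independent), so that $\dim\LinSpan(\rho(gW_J))=|J|$ exactly. Affine independence is immediate from $\rho$ being an embedding on the closed face (its characteristic map is a homeomorphism onto the closed cell), but ruling out that $0$ lies in the affine span of the $\lambda_i$, $r_i\in J$, requires the frame hypothesis $\lambda_i\in\bigcap_{k\neq i}H_k$ together with the fact that $X_J$ is cut out by exactly $\ell-|J|$ independent linear forms — so that a linear dependence among the $\lambda_i$ would force $X_J$ to have dimension strictly less than $|J|$, contradicting that it already contains $|J|$ of them sitting in an affine flat of dimension $|J|-1$ not through the origin. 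Once this rank computation is nailed down, both inclusions and the biconditional follow cleanly.
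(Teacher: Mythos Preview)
Your approach is the same as the paper's: identify $\Supp(gW_{R\setminus J})$ with the flat $g\cdot\bigcap_{r_i\in R\setminus J}H_i\in\L_W$, then use the dimension count for the equivalence with strong stratification. Two corrections are needed, however.

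First, a notational slip: in the paper's conventions a face of type $J$ is $gW_{R\setminus J}$, and $\rho(gW_{R\setminus J})=g\Lambda_J$. You write $\sigma=gW_J$ with $\rho(\sigma)=g\Lambda_J$, which is inconsistent with Definition~\ref{Def:Well-Framed}; your argument is fine once you replace $gW_J$ by $gW_{R\setminus J}$ throughout. Second, and more substantively, your justification for linear independence of $\{\lambda_i:r_i\in J\}$ is circular. The sentence ``a linear dependence among the $\lambda_i$ would force $X_J$ to have dimension strictly less than $|J|$'' is false --- $X_J=\bigcap_{r_k\in R\setminus J}H_k$ is defined independently of the $\lambda_i$ --- and the phrase ``sitting in an affine flat of dimension $|J|-1$ not through the origin'' assumes exactly what you are trying to prove. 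The clean argument avoids the embedding hypothesis entirely: pick linear forms $\alpha_j$ with $\ker\alpha_j=H_j$. Since $W$ acts irreducibly, $\bigcap_k H_k=V^W=\{0\}$, so $\lambda_j\notin H_j$ (otherwise $\lambda_j\in\bigcap_k H_k=\{0\}$). Hence $\alpha_j(\lambda_i)=0$ for $i\neq j$ while $\alpha_j(\lambda_j)\neq 0$, so the $\alpha_j$ form (up to scaling) a dual basis to the $\lambda_i$, giving linear independence of the full frame $\Lambda$ and hence of any subset. This immediately yields $\LinSpan(\Lambda_J)=\bigcap_{r_i\in R\setminus J}H_i$, which is the key identity the paper asserts and from which both parts follow.
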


\begin{proof}
Consider a coset $gW_{R\diff J}$, and recall that 
$\rho(gW_{R\diff J})=g\Lambda_J$.
Using the definition of $\Lambda$, we also have that
\[
\LinSpan(\Lambda_J)=\bigcap_{r_i\in R\diff J} H_i.
\]
The inclusion follows.   The second claim follows from the definition of a strongly 
stratified system by considering dimension.
\end{proof}

The main theorem of this section is that the equivariant support map 
for a well-framed system has a purely algebraic description.  The 
mechanism enabling this characterization is the \emph{Galois correspondence}, 
an analogue of Barcelo and Ihrig's Galois correspondence; 
see~\cite{BarceloIhrig}, 
where they utilize the Tits cone to establish a result generalizing the real case 
to all Coxeter groups.  
In order to state the correspondence, we introduce the poset of \emph{standard 
parabolics}
\[P(W,R)\Def\{gW_Jg^{-1}\ :\ g\in W, J\subseteq R\},\]
ordered by inclusion, i.e., 
\[gW_Jg^{-1}<hW_{J'}h^{-1}\quad \text{if}\quad gW_Jg^{-1}\subset hW_{J'}h^{-1}.\]
Note that $W$ acts on $P(W,R)$ via conjugation.

\begin{theorem}[Galois Correspondence]\label{Galois}
For $(W,R,\Lambda)$ a well-framed system,
\begin{alignat*}{2}
\Stab :\ \Pi_W  &\longrightarrow\ & &\Par \\
        X   &\longmapsto\     & &\{g\in W\ :\ gx=x\ \text{for all }x\in X\}
\end{alignat*}
is an $W$-poset isomorphism, with inverse
\begin{alignat*}{2}
\Fix :\ \Par  &\longrightarrow\ & &\Pi_W \\
        G     &\longmapsto\ & &V^G:=\{v\in V\ :\ gv=v\quad\text{for all}\quad g\in G\}.
\end{alignat*}
\end{theorem}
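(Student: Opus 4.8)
The plan is to establish that $\Stab$ and $\Fix$ are mutually inverse, order-reversing\,/\,order-preserving maps between $\Pi_W$ and $\Par$ and that both are $W$-equivariant. I would organize the argument around three claims: (1) $\Stab$ maps $\Pi_W$ into $\Par$, (2) $\Fix$ maps $\Par$ into $\Pi_W$, and (3) the two maps are mutually inverse; equivariance and the poset property will then be essentially immediate.

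For claim (1), take $X=\Supp(gW_{R\diff J})=\LinSpan(g\Lambda_J)$. By Proposition~\ref{Pi:L} and its proof, the subspace $Y:=\bigcap_{r_i\in R\diff J}H_i$ satisfies $\LinSpan(\Lambda_J)=Y$, and $W_{R\diff J}$ fixes $Y$ pointwise; hence $gW_{R\diff J}g^{-1}\subseteq\Stab(X)$. For the reverse inclusion I want $\Stab(X)\subseteq gW_{R\diff J}g^{-1}$, i.e.\ after conjugating, $\Stab(Y)\subseteq W_{R\diff J}$. This is the place where I expect to invoke Steinberg's fixed-point theorem: the pointwise stabilizer in $W$ of any subspace is generated by the reflections it contains, and a reflection $r\in W$ fixes $Y$ pointwise iff $Y\subseteq H_r$; one then needs that the only reflecting hyperplanes containing $Y$ are the $H_i$ with $r_i\in R\diff J$, so that $\Stab(Y)=W_{R\diff J}$. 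Getting this last containment is, I think, \textbf{the main obstacle}: a priori some reflecting hyperplane $H_s$ with $s\notin R\diff J$ could also contain $Y$, and ruling this out (or rather, showing $\Stab(Y)$ is still a \emph{standard} parabolic, possibly via the well-framedness embedding $\rho$) is where the real work lies. I would attack it by noting $\dim Y=|J|$ and that $\Stab(Y)=W_{R\diff J}$ holds for simple systems classically; for general well-framed systems one leverages that $\rho$ is an embedding together with the Galois-correspondence machinery of Barcelo--Ihrig adapted as the paper indicates.

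For claim (2), given $G=gW_Jg^{-1}\in\Par$, I compute $V^G=g(V^{W_J})$, and $V^{W_J}=\bigcap_{r_i\in J}H_i$ is a flat in $\L_W$; moreover by the frame axioms it is exactly $\LinSpan(\Lambda_{R\diff J})=\Supp(gW_{R\diff J})\in\Pi_W$ (using that each $\lambda_i\in\bigcap_{j\ne i}H_j$ so $\lambda_i\in V^{W_J}$ precisely when $r_i\notin J$, combined with a dimension count). So $\Fix$ lands in $\Pi_W$. Claim (3) is then a short computation: $\Fix(\Stab(X))=V^{\Stab(X)}\supseteq X$ always, and equality follows because $\Stab(X)\supseteq gW_{R\diff J}g^{-1}$ forces $V^{\Stab(X)}\subseteq g(V^{W_{R\diff J}})=X$; conversely $\Stab(\Fix(G))=\Stab(V^G)\supseteq G$, with equality from claim (1) applied to the standard-parabolic description of $V^G$ obtained in claim (2). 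Finally, $W$-equivariance is clear since $\Stab(hX)=h\Stab(X)h^{-1}$ and $\Fix(hGh^{-1})=hV^G$, and both maps are inclusion-reversing because a larger subspace is fixed by a smaller stabilizer and vice versa; combined with bijectivity this makes $\Stab$ a $W$-poset isomorphism onto $\Par$ with the stated inverse.
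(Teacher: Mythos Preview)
Your outline is sound and your Claim~(2) is essentially the paper's computation of $\Fix$ verbatim. The divergence is in Claim~(1), where you reach for Steinberg's theorem and then flag as the ``main obstacle'' the task of showing that every reflection fixing $Y=\bigcap_{r_i\in R\setminus J}H_i$ lies in $W_{R\setminus J}$. This is a detour the paper avoids entirely, and the obstacle you identify is not where the content actually lies.

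The paper's argument for $\Stab(\Supp(gW_{R\setminus J}))=gW_{R\setminus J}g^{-1}$ is a two-line computation using only that $\rho$ is an equivariant embedding of a \emph{balanced} complex. Since $\Supp(gW_{R\setminus J})$ is the $\mathbb C$-span of the simplex $\rho(gW_{R\setminus J})$, a linear $w$ fixes the span pointwise iff it fixes the simplex pointwise. By equivariance and injectivity of $\rho$, this happens iff $w$ stabilizes the face $gW_{R\setminus J}$ in $\Delta$; and because $\Delta$ is balanced (vertices of a face have pairwise distinct types and $W$ preserves type), stabilizing the face forces $w$ to fix each vertex, which is exactly the coset condition $w\in gW_{R\setminus J}g^{-1}$. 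No Steinberg, no Barcelo--Ihrig, no classification of reflecting hyperplanes through $Y$.

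Phrased in your own language, the resolution of your obstacle is this: from $w\lambda_i=\lambda_i$ for each $r_i\in J$ and injectivity of $\rho$ on vertices you get $w\in\bigcap_{r_i\in J}W_{R\setminus\{r_i\}}$, and the intersection property (equivalent to $\Delta(W,R)$ being a simplicial complex, which well-framedness guarantees via Proposition~\ref{Proposition:Simplicial}) yields $\bigcap_{r_i\in J}W_{R\setminus\{r_i\}}=W_{R\setminus J}$. So the embedding $\rho$ is not merely a tool to \emph{finish} a Steinberg-style argument; it replaces that argument outright. Your Claims~(2) and~(3) and the equivariance\,/\,order remarks are fine as written.
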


\begin{proof}
Using the fact that $\rho$ is an equivariant embedding of a balanced complex, 
\begin{align*}
\Stab(\Supp(gW_{R\diff J})) &= \{w\in W\ :\ w\rho(gW_{R\diff S})=
\rho(gW_{R\diff S})\ \text{pointwise}\}\\
&=\{w\in W\ :\ wgW_{R\diff S}=gW_{R\diff S}\}\\
&=gW_{R\diff S}g^{-1}.
\end{align*}

Regarding the inverse, we have
\[
V^{g(W_{R\diff S})g^{-1}}=gV^{W_{R\diff S}}=g\bigcap_{r_i\in R\diff S}H_i
=g\cdot\LinSpan(\Lambda_S)
=\Supp(gW_{R\diff S}).
\]
\end{proof}

The promised algebraic interpretation of support is now encoded in an 
equivariant commutative diagram:

\begin{theorem}\label{Commutative}
For a well-framed system $(W,R,\Lambda)$, we have the following commutative 
diagram of equivariant maps:
 \[\xymatrixcolsep{.5cm}\xymatrixrowsep{3pc}
 \xymatrix{
 gW_J\ar@{|->}[d] & \Delta\ar@{.>>}[d]^-{}_-{}
\ar@{->}[rrr]^-{\sim}_-{\rho} &  & &
\rho(\Delta) \ar@{->>}[d]^-{\LinSpan}_-{}
&  \\
gW_Jg^{-1} &{\Par}\ar@<.8ex>[rrr]^-\Fix_-{} &  & & 
\Pi_W \ar@<.8ex>[lll]^-\Stab_-\sim \ar@<-.4ex>@{^{(}->}[r]^-{\iota}_-{}&  \L_W
 }
 \]
If $(W,R)$ is strongly stratified, then $\Pi_W=\L_W$.
\end{theorem}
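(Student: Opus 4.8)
The plan is to read Theorem~\ref{Commutative} as an assembly of results already in hand: the top isomorphism will come from the definition of \emph{well-framed} refined by Proposition~\ref{Proposition:Simplicial}; the bottom isomorphisms $\Fix$, $\Stab$ are Theorem~\ref{Galois}; the inclusion $\iota$ is Proposition~\ref{Pi:L}; and the right-hand surjection $\LinSpan$ onto $\Pi_W$ is the definition of $\Pi_W$. So the only genuinely new content is that the left-hand map $gW_J\mapsto gW_Jg^{-1}$ is a well-defined equivariant surjection of posets and that every square in the displayed diagram commutes; the final sentence will then be a restatement of the equality clause of Proposition~\ref{Pi:L}.

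First I would dispatch the top row. Since $W$ is finite, $\|\Delta\|$ is a compact CW-complex, so the embedding $\rho$ is a homeomorphism onto its image; by Proposition~\ref{Proposition:Simplicial} the source is an honest simplicial complex, and $\rho$ restricted to each closed cell $\overline{gW_{R\diff S}}$ is the affine map carrying the standard simplex to $\Hull(\{g\lambda_i : r_i\in S\})=g\Lambda_S$. Injectivity of $\rho$ on all of $\|\Delta\|$ then forces this affine map to be injective, so each $g\Lambda_S$ is a geometric simplex and distinct faces of $\Delta$ have distinct image simplices; hence $\rho$ restricts to a bijection on faces and to a poset isomorphism $\Delta\xrightarrow{\sim}\rho(\Delta)$ of face posets. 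Equivariance is immediate from $\rho(hgW_J)=h\,\rho(gW_J)$. The right vertical map $\rho(\Delta)\xrightarrow{\LinSpan}\Pi_W$ is surjective by the definition of $\Pi_W$ and $W$-equivariant because $W$ acts linearly on $V$.

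Next I would treat the left vertical map $\pi\colon gW_J\mapsto gW_Jg^{-1}$. It is well defined since $g^{-1}g'\in W_J$ implies $g'W_J(g')^{-1}=gW_Jg^{-1}$, lands in $P(W,R)$ by construction, is clearly surjective, and intertwines left translation with conjugation, $(hg)W_J(hg)^{-1}=h(gW_Jg^{-1})h^{-1}$, hence is equivariant; one checks it reverses order, consistently with the reverse-inclusion orders on $\Pi_W$ and $\L_W$ and with the other arrows of the diagram. The one square that is not purely formal asks that $\LinSpan(\rho(gW_J))=\Fix(gW_Jg^{-1})$, i.e. $\Supp(gW_{R\diff S})=V^{gW_{R\diff S}g^{-1}}$, which is precisely the chain $V^{g(W_{R\diff S})g^{-1}}=gV^{W_{R\diff S}}=g\bigcap_{r_i\in R\diff S}H_i=g\cdot\LinSpan(\Lambda_S)=\Supp(gW_{R\diff S})$ already recorded in the proof of Theorem~\ref{Galois}. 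The identities $\Stab\circ\Fix=\mathrm{id}$, $\Fix\circ\Stab=\mathrm{id}$ and the compatibility of $\pi$ with $\Stab$ are also Theorem~\ref{Galois}, while $\iota$ is the inclusion of Proposition~\ref{Pi:L}; assembling these yields the commuting diagram of $W$-posets.

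For the last sentence, note that by the Galois correspondence $\Pi_W=\{V^G : G\in P(W,R)\}$, which depends only on $(W,R)$ and not on the chosen frame; hence if $(W,R)$ is strongly stratified---i.e. $(W,R,\Lambda')$ is strongly stratified for \emph{some} frame $\Lambda'$---then the equality clause of Proposition~\ref{Pi:L} applied to $(W,R,\Lambda')$ yields $\Pi_W=\L_W$. I do not expect any serious obstacle: the theorem is essentially a bookkeeping corollary of Theorem~\ref{Galois}, Proposition~\ref{Pi:L} and Proposition~\ref{Proposition:Simplicial}. The two points needing the most care are upgrading ``$\rho$ is an embedding'' to ``$\rho$ is a poset isomorphism onto a geometric simplicial complex'', and---if the hypothesis of the last sentence is read as a condition on $(W,R)$ rather than on the fixed system $(W,R,\Lambda)$---the frame-independence of $\Pi_W$ just observed.
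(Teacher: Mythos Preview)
Your proposal is correct and matches the paper's intent: the paper gives no separate proof of Theorem~\ref{Commutative}, presenting it as an immediate repackaging of Theorem~\ref{Galois} and Proposition~\ref{Pi:L}, and your verification is precisely the expected unpacking of those ingredients. Your observation that $\Pi_W=\{V^G:G\in P(W,R)\}$ is frame-independent, needed to handle the final sentence when the hypothesis is read as a condition on $(W,R)$ rather than on the fixed $(W,R,\Lambda)$, is a nice point the paper leaves implicit.
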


\section{Pointed objects}\label{Section:Local}
This section introduces the main objects of this paper, the generalizations of 
Ehrenborg and Jung's pointed objects.  
Recall that the \emph{(closed) star} $\St_\Sigma(\sigma)$ of a simplex 
$\sigma$ in a simplicial complex $\Sigma$ is the 
subcomplex of all faces that are joinable to $\sigma$ within $\Sigma$.  That is,  
\[\St_\Sigma(\sigma)=\{\tau\ :\ \tau\in \Sigma\quad
\text{and}\quad \tau\cup\sigma\in \Sigma\}.\]
We will write $\tau*\sigma$ for the join $\tau\cup\sigma$.

\begin{definition}\label{PointedObjects}
Let $(W,R,\Lambda)$ be a well-framed system.  Let $U\subseteq R$.  
The \emph{subcomplex pointed by $U$} is 
\begin{align*}
\Delta^U\ &\Def\ \St_\Delta(W_{R\diff U}).\\
\intertext{The corresponding \emph{pointed poset of flats} is}
\Pi^U\ &\Def\ \{\Supp(\sigma)\ :\ \sigma\in\Delta^U\},
\end{align*}
as a subposet of $\L_W$.  

For $T\subseteq R$, let 
\[\Delta_T^U\ \Def\ \Delta^U\mid_T\quad\text{and}\quad
\Pi_T^U\ \Def\ \{\Supp(\sigma)\ :\ \sigma\in\Delta_T^U\}.\]
\end{definition}

Note that by Theorem~\ref{Commutative}, we have $\Supp(gW_J)=V^{gW_Jg^{-1}}$, showing that
\begin{equation}\label{Pi:Rewrite}
\Pi_T^U\ = \{V^{gW_Jg^{-1}}\ :\ gW_J\in\Delta_T^U\}
\end{equation}

Figure~\ref{Fig:Our} illustrates the construction of 
$\Delta_T^U$ for $W=\mathfrak S_{4}$ and eight choices of $T$ and $U$.  We have 
written ``$U$'' above each element of $U$, and ``$T$'' below each element of $T$ in the Coxeter 
diagram $\mathcal D$ of $(W,R)$.  Labeling the generators $R=\{r_0,\ldots, r_{\ell-1}\}$, a vertex 
marked $i$ in $\mathcal D$ represents the reflection $r_i$ whose hyperplane can be 
written as
\[H_i=\LinSpan(\Lambda\diff\{\lambda_i\}).\]

Recall that $W$ is the symmetry group of the tetrahedron $\P$, so the 
barycentric subdivision $B(\P)$ of the boundary of $\P$ 
is homeomorphic to the Coxeter complex via radial projection.
The vertex of $\Delta$ marked with $i$ corresponds to $\lambda_i$.  It 
happens that $W$ is also a Shephard group, because $\P$ is a regular polytope.  
In the later notation of Shephard groups, vertex $i$ will correspond to a face $B_i$ in a 
distinguished \emph{base flag} $\calB$ (a chamber in the flag complex $K(\P)$ for 
the polytope $\P$).  

\begin{figure}[hbt]
\begin{center}
{\psfrag{0}{\tiny{0}}
\psfrag{1}{\tiny{1}}
\psfrag{2}{\tiny{2}}
\psfrag{U}{\tiny{$P$}}
\psfrag{T}{\tiny{$T$}}
\includegraphics{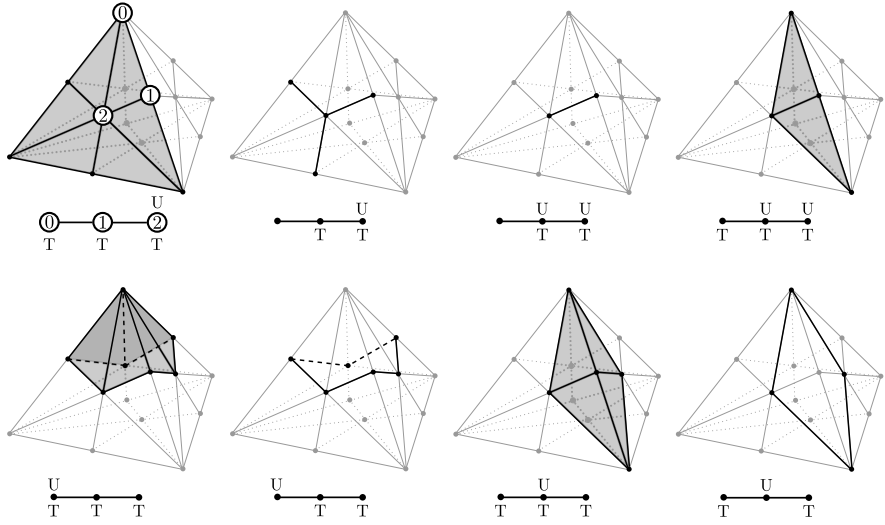}
}
\caption{A sampling of $\Delta_T^U$ for $W=\mathfrak S_4$.}\label{Fig:Our}
\end{center}
\end{figure}

Similarly, Figure~\ref{Fig:CubeExample} illustrates the construction for the hyperoctahedral group $\mathbb Z_2\wr\mathfrak S_n$ of 
$n\times n$ signed permutation matrices.  
Recall that this is the symmetry group of the $n$-cube $\P$, so its Coxeter complex 
is a radial projection of the barycentric subdivision $B(\P)$ of the boundary of $\P$.  
Again, we let $i$ 
denote our choice of $\lambda_i$, and in later notation, a face $B_i$ of a distinguished 
base flag $B_0\subset\cdots\subset B_{\ell-1}$ in the flag complex $K(\P)$ of $\P$.

\begin{figure}[hbt]
\begin{center}
\includegraphics{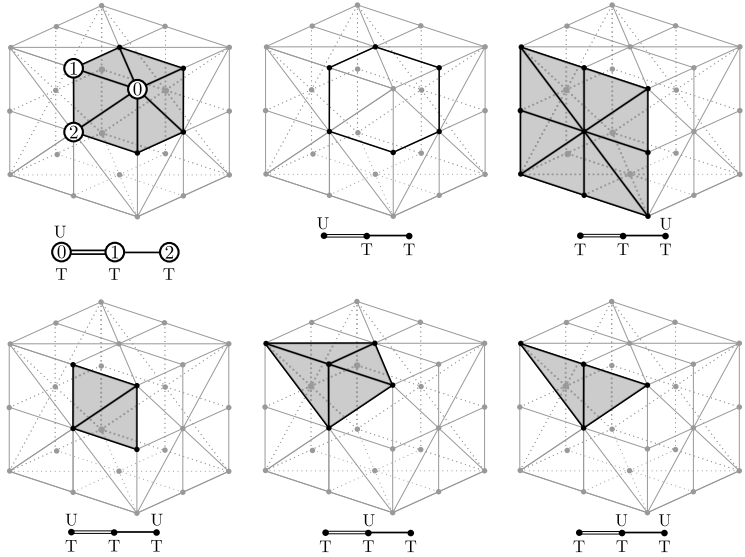}
\caption{A sampling of $\Delta_T^U$ for $W=\mathbb Z_2\wr \mathfrak S_3$.}\label{Fig:CubeExample}
\end{center}
\end{figure}

\section{Equivariant homotopy for locally conical systems}\label{Section:Equivariant}

Recall that a map $f:P\to Q$ of posets is \emph{order-preserving} 
if $f(p_1)\leq f(p_2)$ whenever $p_1\leq p_2$, and \emph{order-reversing} if 
$f(p_1)\geq f(p_2)$ whenever $p_1\leq p_2$.  A $G$-poset is a poset with a $G$-action that 
preserves order, and a map $f:P\to Q$ of such posets is \emph{$G$-equivariant} 
if it is a mapping of $G$-sets, i.e., if $f(gp)=gf(p)$ for all $g\in G$ and $p\in P$.  

The order complex of a poset $P$, i.e., the simplicial complex of all totally ordered 
subsets of $P$, is denoted $\Delta(P)$.  The \emph{face poset} $\Face(\Sigma)$ 
of a simplicial complex $\Sigma$ is the poset of all \emph{nonempty} faces ordered by inclusion.  
Finally, $\homotopic$ denotes homotopy equivalence, 
with added decoration to indicate equivariance.  
Note that the barycentric subdivision $\Delta(\Face(\Sigma))$ is homeomorphic to $\Sigma$.

The aim of this section is to establish a sufficient condition for the order complex of 
$\Pi_T^U \diff \{\hat{1}\}$ to be equivariantly homotopy equivalent to $\Delta_T^U$. 
Our main tool is a specialization\footnote{The main theorem of~\cite{Webb} states that 
$\Delta(P)\homotopic_G\Delta(Q)$ if $\phi:P\to Q$ is an order-preserving 
$G$-equivariant map of $G$-posets such that each fiber $\Delta(\phi^{-1}(Q_{\leq q}))$ 
is $\Stab_G(q)$-contractible.  Theorem~\ref{Webb} 
specializes $P$ to $\Face(\Sigma)$ and replaces $Q$ with its \emph{opposite}
$Q^{\rm{opp}}$, using the fact that $\Sigma\homotopic_G \Delta(\Face(\Sigma))$ and 
$\Delta(Q^{\rm{opp}})=\Delta(Q)$.  Recall that $Q^{\rm{opp}}$ is obtained from 
$Q$ by reversing order.} of Th\'evenaz and Webb's equivariant version of Quillen's fiber lemma.

\begin{theorem}[Th\'evenaz and Webb~\cite{Webb}]\label{Webb}
Let $Q$ be a $G$-poset, and let $\Sigma$ be a simplicial complex with a $G$-action.  If 
$\phi:\Face(\Sigma)\to Q$ is an order-reversing $G$-equivariant map of $G$-posets such that 
the order complex $\Delta(\phi^{-1}(Q_{\geq q}))$ is $\Stab_G(q)$-contractible for all $q\in Q$, 
then $\Sigma\homotopic_G\Delta(Q)$.  
\end{theorem}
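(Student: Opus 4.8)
The plan is to deduce this statement from the general equivariant fiber lemma of Th\'evenaz and Webb by passing to opposite posets, exactly as indicated in the footnote. Recall the general form: if $\psi:P\to Q$ is an order-preserving $G$-equivariant map of $G$-posets such that $\Delta(\psi^{-1}(Q_{\leq q}))$ is $\Stab_G(q)$-contractible for every $q\in Q$, then $\Delta(P)\homotopic_G\Delta(Q)$. I want to feed this machine with $P:=\Face(\Sigma)$ and with the target poset replaced by $Q^{\mathrm{opp}}$.

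First I would record the two formal facts that make the reduction work. One: since $G$ acts on $\Sigma$ by simplicial automorphisms, it acts on $\Face(\Sigma)$ and on its order complex $\Delta(\Face(\Sigma))=\Sd(\Sigma)$, and the canonical homeomorphism $|\Sd(\Sigma)|\to|\Sigma|$ (sending a vertex of $\Sd(\Sigma)$, i.e.\ a face of $\Sigma$, to its barycenter) is $G$-equivariant; hence $\Sigma\homotopic_G\Delta(\Face(\Sigma))$. Two: reversing the order of a poset does not change its order complex, so $\Delta(Q^{\mathrm{opp}})=\Delta(Q)$, and both the $G$-action and the point stabilizers $\Stab_G(q)$ are unaffected by passing to $Q^{\mathrm{opp}}$.

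Next I would verify that the hypotheses of the general theorem hold for the map $\psi:=\phi$ regarded as a map $P=\Face(\Sigma)\to Q^{\mathrm{opp}}$. It is $G$-equivariant by assumption, and since it is order-reversing into $Q$ it is order-preserving into $Q^{\mathrm{opp}}$. For the fiber condition: for $q\in Q^{\mathrm{opp}}$ we have $(Q^{\mathrm{opp}})_{\leq q}=Q_{\geq q}$, so $\psi^{-1}((Q^{\mathrm{opp}})_{\leq q})=\phi^{-1}(Q_{\geq q})$, and $\Delta$ of this set is $\Stab_G(q)$-contractible precisely by the hypothesis of the theorem we are proving. Applying the general theorem gives $\Delta(\Face(\Sigma))=\Delta(P)\homotopic_G\Delta(Q^{\mathrm{opp}})=\Delta(Q)$, and composing with $\Sigma\homotopic_G\Delta(\Face(\Sigma))$ yields $\Sigma\homotopic_G\Delta(Q)$.

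Since the substance of the argument is the general Th\'evenaz--Webb theorem, which I am assuming, there is no serious obstacle: the only points requiring care are bookkeeping ones --- confirming that ``order-reversing into $Q$'' coincides with ``order-preserving into $Q^{\mathrm{opp}}$'', that the fiber $(Q^{\mathrm{opp}})_{\leq q}$ is exactly the up-set $Q_{\geq q}$ appearing in the hypothesis (and that $\Stab_G(q)$ is read off the same underlying $G$-set), and that the subdivision equivalence $\Sigma\homotopic_G\Delta(\Face(\Sigma))$ can be taken $G$-equivariantly, which it can, being induced by a $G$-homeomorphism of geometric realizations.
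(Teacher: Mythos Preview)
Your argument is correct and is exactly the derivation the paper sketches in its footnote: specialize the general Th\'evenaz--Webb fiber lemma to $P=\Face(\Sigma)$ and replace $Q$ by $Q^{\mathrm{opp}}$, using $\Sigma\simeq_G\Delta(\Face(\Sigma))$ and $\Delta(Q^{\mathrm{opp}})=\Delta(Q)$. There is nothing to add.
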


Consider a simplicial complex $\Sigma$ and order-reversing map
$\phi:\Face(\Sigma)\to Q$ of posets.  
Since $\phi^{-1}(Q_{\geq q})$ is an order ideal in $\Face(\Sigma)$, it is the 
face poset $\Face(\Phi)$ of some subcomplex $\Phi\subseteq \Sigma$.  
Call such a subcomplex $\Phi$ a \emph{Quillen fiber}.  Note that 
$\Delta(\phi^{-1}(Q_{\geq q}))$ is homeomorphic to $\Phi$, so Quillen's 
fiber lemma concerns contractibility of Quillen fibers.

The following definition of a \emph{locally conical system} is central to our work.  
In particular, we will show that for such systems, Theorem~\ref{Webb} can be applied to 
establish the desired homotopy equivalence.

\begin{definition}
A well-framed system $(W,R,\Lambda)$ is \emph{locally conical} if for each nonempty 
$U\subseteq R$, every Quillen fiber 
\[X\cap \Delta^U\qquad(X\in\Pi^U\diff\{\hat{1}\})\] of 
$\Supp:\Face(\Delta^U)\to \Pi^U\diff\{\hat{1}\}$ 
has a cone point.
\end{definition}

Recall that a \emph{cone point} $p$ of a simplicial complex $\Sigma$ is a vertex 
of $\Sigma$ that is joinable in $\Sigma$ to every simplex of $\Sigma$, i.e., every 
maximal simplex of $\Sigma$ contains $p$.

\begin{proposition}\label{Prop:Contraction}
Let $\Sigma$ be a simplicial complex with a $G$-action.  
If $\Sigma$ has a cone point, then $\Sigma$ is $G$-contractible.
\end{proposition}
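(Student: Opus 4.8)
The plan is to produce an explicit $G$-equivariant strong deformation retraction of $\|\Sigma\|$ onto a single $G$-fixed point. The one subtlety is that an individual cone point need not be fixed by $G$; I would get around this by passing to the set $C$ of \emph{all} cone points of $\Sigma$, showing that $C$ spans a $G$-invariant simplex, and retracting onto its barycenter.

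First I would record two facts. $(i)$ $C$ is nonempty by hypothesis and is $G$-invariant: since $G$ acts by simplicial automorphisms it permutes the maximal simplices of $\Sigma$, so if every maximal simplex contains $p$ then every maximal simplex contains $gp$, whence $gp\in C$. $(ii)$ Every face $\sigma$ of $\Sigma$ is contained in some maximal face $\mu$, and $\mu\supseteq C$ because each cone point lies in every maximal simplex; hence $\sigma\cup C\subseteq\mu\in\Sigma$, so $\sigma\cup C\in\Sigma$. In particular $C\in\Sigma$, and $C$ is joinable in $\Sigma$ to every face. Now let $b\in\|\Sigma\|$ be the barycenter of the closed geometric simplex spanned by $C$; since $G$ fixes $C$ setwise and acts by permuting vertices on each cell, $gb=b$ for all $g\in G$, so $b$ is a $G$-fixed point. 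Define $H\colon\|\Sigma\|\times[0,1]\to\|\Sigma\|$ by the straight-line homotopy $H(x,t)=(1-t)x+tb$. It is well defined: any $x\in\|\Sigma\|$ lies in the closed cell of some face $\sigma$, and since $\sigma\cup C\in\Sigma$ both $x$ and $b$ lie in the closed affine cell of $\sigma\cup C$, hence so does the whole segment from $x$ to $b$. It is continuous, being affine on each closed cell and agreeing across shared faces. And it is $G$-equivariant, since $g\bigl((1-t)x+tb\bigr)=(1-t)gx+tgb=(1-t)gx+tb=H(gx,t)$, using $gb=b$ and the fact that the $G$-action is affine on each cell. Finally $H(\,\cdot\,,0)=\mathrm{id}_{\|\Sigma\|}$, $H(\,\cdot\,,1)\equiv b$, and $H(b,t)=b$ for all $t$, so $H$ is a $G$-equivariant strong deformation retraction of $\|\Sigma\|$ onto $\{b\}$; in particular $\Sigma$ is $G$-contractible.

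The argument is essentially routine, and the only point that needs any care — which I would flag as the single obstacle — is upgrading the hypothesis ``$\Sigma$ has \emph{a} cone point'' to ``$\Sigma$ has a $G$-fixed point''; this is exactly what steps $(i)$--$(ii)$ accomplish, by observing that the set of all cone points is a $G$-stable face of $\Sigma$. Once that is in hand, the verifications that the straight-line homotopy stays in $\|\Sigma\|$, is continuous, and is equivariant follow immediately from $\sigma\cup C\in\Sigma$ and $gb=b$.
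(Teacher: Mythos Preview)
Your argument is correct and follows essentially the same route as the paper: pass from a single cone point to the set of all cone points, observe this set spans a $G$-stable simplex whose barycenter is a $G$-fixed point, and then retract via the straight-line homotopy. You supply more detail than the paper's terse version, but the strategy and the key observation are identical.
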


\begin{proof}
The union of all cone points must form a $G$-stable simplex of $\Sigma$, 
whose barycenter $p$ is therefore a $G$-fixed point of the geometric realization $\|\Sigma\|$. 
Since $p$ lies in a common simplex with every
simplex of $\Sigma$, this space $\|\Sigma\|$ is star-shaped with respect to the $G$-fixed point 
$p$, and a straight-line homotopy retracts $\|\Sigma\|$ onto $p$ in a $G$-equivariant fashion.
\end{proof}

Before employing Theorem~\ref{Webb}, recall 
that $\Delta^U=\St_\Delta(W_{R\diff U})$ and that the action of $W$ on $\Delta$ preserves 
types.  Hence, the subcomplex $\Delta^U_T$ is a $W_{R\diff U}$-poset.   It follows that
\[\Supp:\Face(\Delta^U_T)\to \Pi^U_T\diff\{\hat{1}\}\]
is an order-reversing $W_{R\diff U}$-equivariant map.

\begin{theorem}\label{Equivalence}
Let $(W,R,\Lambda)$ be a locally conical system. 
Let $U\subseteq R$ be nonempty and $T\subseteq R$.  Then 
$\Delta(\Pi_T^U\diff\{\hat{1}\})$ is $W_{R\diff U}$-homotopy equivalent 
to $\Delta_T^U$.
\end{theorem}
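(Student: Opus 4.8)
The plan is to apply the equivariant Quillen fiber lemma (Theorem~\ref{Webb}) to the order-reversing map $\Supp:\Face(\Delta_T^U)\to\Pi_T^U\diff\{\hat1\}$, with $G=W_{R\diff U}$, $\Sigma=\Delta_T^U$ and $Q=\Pi_T^U\diff\{\hat1\}$. First I would note that this $\Supp$ is indeed a well-defined, order-reversing, $W_{R\diff U}$-equivariant map of $G$-posets: equivariance and order-reversal were recorded just before the theorem, and $\Supp$ lands in the poset with $\hat1$ deleted because every nonempty face $\sigma$ contains a vertex mapped by $\rho$ to some nonzero $g\lambda_i$, so $\Supp(\sigma)=\LinSpan(\rho(\sigma))\neq\{0\}=\hat1$.

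Next I would identify the Quillen fibers. For $X\in\Pi_T^U\diff\{\hat1\}$ the preimage $\Supp^{-1}\big((\Pi_T^U\diff\{\hat1\})_{\ge X}\big)$ is the face poset of the subcomplex
\[
X\cap\Delta_T^U:=\{\sigma\in\Delta_T^U\ :\ \rho(\sigma)\subseteq X\},
\]
which carries a $\Stab_{W_{R\diff U}}(X)$-action and whose order complex is homeomorphic to $X\cap\Delta_T^U$ itself. By Proposition~\ref{Prop:Contraction} it then suffices to show that each such $X\cap\Delta_T^U$ has a cone point; Theorem~\ref{Webb} delivers $\Delta_T^U\homotopic_{W_{R\diff U}}\Delta(\Pi_T^U\diff\{\hat1\})$.

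The hard part will be producing this cone point, because local conicality is stated only for the full complex $\Delta^U$ (the case $T=R$), and we must instead stay inside $\Delta_T^U$. The bridge is the bookkeeping identity $\dim\Supp(gW_{R\diff J})=|J|=|\type(gW_{R\diff J})|$, which follows from $\Supp(gW_{R\diff J})=g\cdot\LinSpan(\Lambda_J)$ together with the equality $\LinSpan(\Lambda_J)=\bigcap_{r_i\in R\diff J}H_i$ used in the proof of Proposition~\ref{Pi:L}. Given $X\in\Pi_T^U\diff\{\hat1\}$, pick $\sigma_0\in\Delta_T^U$ with $\Supp(\sigma_0)=X$ (so $\type(\sigma_0)\subseteq T$ and $|\type(\sigma_0)|=\dim X$), and let $p$ be a cone point of $X\cap\Delta^U$, which exists by local conicality since $X\in\Pi^U\diff\{\hat1\}$ (as $\Pi_T^U\subseteq\Pi^U$). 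Then $p*\sigma_0\in X\cap\Delta^U$ has support containing $\Supp(\sigma_0)=X$ and contained in $X$, hence equal to $X$, so $|\type(p*\sigma_0)|=\dim X=|\type(\sigma_0)|$; by balancedness of $\Delta$ this forces $p\in\sigma_0$, whence $\type(p)\subseteq\type(\sigma_0)\subseteq T$ and $p$ is a vertex of $X\cap\Delta_T^U$. For any $\tau\in X\cap\Delta_T^U$ one then has $p*\tau\in X\cap\Delta^U$ with $\type(p*\tau)=\type(p)\cup\type(\tau)\subseteq T$, so $p*\tau\in X\cap\Delta_T^U$; thus $p$ is a cone point of $X\cap\Delta_T^U$. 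I expect this dimension-counting observation — that the cone point supplied for $\Delta^U$ is automatically of a type lying in $T$ — to be the only substantive point, with everything else a routine verification of the hypotheses of Theorem~\ref{Webb}.
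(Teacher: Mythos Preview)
Your proposal is correct and follows essentially the same route as the paper: apply Theorem~\ref{Webb} to $\Supp:\Face(\Delta_T^U)\to\Pi_T^U\diff\{\hat1\}$, pull a cone point $p$ for $X\cap\Delta^U$ from local conicality, argue that $p$ must already be a vertex of a chosen $\sigma_0\in\Delta_T^U$ with $\Supp(\sigma_0)=X$, and conclude that $p$ cones the restricted fiber $X\cap\Delta_T^U$. The only cosmetic difference is in the middle step: the paper observes that the vertices of $p*\sigma_0$ lie in a single translate $g\Lambda$, a linearly independent set, so $p\in\Supp(\sigma_0)$ forces $p$ to be a vertex of $\sigma_0$; you package the same linear-independence fact as the identity $\dim\Supp(gW_{R\diff J})=|J|=|\type(gW_{R\diff J})|$ and compare cardinalities of types. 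These are the same argument in different clothing.
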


\begin{proof}
We apply Theorem~\ref{Webb} to the map 
\[
\Supp: \Face(\Delta_T^U)\twoheadrightarrow \Pi_T^U\diff\{\hat{1}\}.
\] 
Let $X\in\Pi_T^U\diff\{\hat{1}\}\subseteq \Pi^U\diff\{\hat{1}\}$, and 
consider first the Quillen fiber
\begin{equation}\label{UnrestrictedFiber}
\Phi:=X\cap \Delta^U
\end{equation}
for the unrestricted map $\Supp:\Face(\Delta^U)\to \Pi^U\diff\{\hat{1}\}$.

By definition of locally conical system, 
$\Phi$ has a cone point $p$.  Since $\Delta$ is balanced, the subcomplex 
$\Phi$ is also balanced.  It follows that $p$ is the unique vertex of $\Phi$ of 
its type.  

Choose $\sigma\in\Delta_T^U$ with $X=\Supp(\sigma)$.  
By the construction of $\rho(\Delta)$, the vertices of the join $\{p\}*\sigma$ 
are contained in 
\[g\Lambda=\{g\lambda_1,g\lambda_2,\ldots, g\lambda_\ell\}\]
for some $g\in W$.  Because $g\Lambda$ is a linearly independent set,
$p\in\Supp(\sigma)$ implies that $p$ is a vertex of $\sigma$, and hence 
a vertex of $\Delta_T^U$.  
Therefore, the restricted Quillen fiber $X\cap \Delta_T^U$ 
also has $p$ as a cone point.  
It follows from Proposition~\ref{Prop:Contraction} 
that $X\cap\Delta_T^U$ is $\Stab_{W_{R\diff U}}(X)$-contractible.
\end{proof}

\section{Homology of locally conical systems}
\label{Section:Homology}

By applying the homology functor to Theorem~\ref{Equivalence}, we have the following

\begin{theorem}\label{Theorem:Main}  
Let $(W,R,\Lambda)$ be a locally conical system.  Let $U\subseteq R$ be 
nonempty and $T\subseteq R$.  Then
$\bigoplus_i\tilde{H}_{i}(\Delta_T^U)$
and 
$\bigoplus_i\tilde{H}_{i}(\Delta(\Pi_T^U\diff \{\hat{1}\}))$
are isomorphic as 
graded $(W_{R\diff U})$-modules.
\end{theorem}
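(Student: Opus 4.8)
The plan is to obtain this as an immediate consequence of Theorem~\ref{Equivalence} together with standard facts about equivariant homotopy equivalences. Since $(W,R,\Lambda)$ is locally conical, Theorem~\ref{Equivalence} gives a $W_{R\diff U}$-homotopy equivalence between the simplicial complex $\Delta_T^U$ and the order complex $\Delta(\Pi_T^U\diff\{\hat 1\})$. An equivariant homotopy equivalence of $G$-spaces induces isomorphisms on all reduced homology groups, and moreover these isomorphisms are $G$-equivariant, hence are isomorphisms of $\ZZ G$-modules (or $\K G$-modules, depending on the chosen coefficient ring) in each degree.

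First I would recall that reduced simplicial homology $\tilde H_i(-)$ is a homotopy functor: any homotopy equivalence $f\colon \Sigma\homotopic \Sigma'$ induces isomorphisms $f_*\colon \tilde H_i(\Sigma)\xrightarrow{\ \sim\ }\tilde H_i(\Sigma')$ for all $i$. Second, I would note that when $f$ is $G$-equivariant, $f_*$ commutes with the induced $G$-actions on homology, so each $f_*$ is an isomorphism of $G$-modules. Taking the direct sum over all degrees $i$ then yields the claimed isomorphism of graded $(W_{R\diff U})$-modules
\[
\bigoplus_i \tilde H_i(\Delta_T^U)\ \isom\ \bigoplus_i \tilde H_i\bigl(\Delta(\Pi_T^U\diff\{\hat 1\})\bigr).
\]
One small point worth spelling out is why the two sides even carry $W_{R\diff U}$-actions compatibly with $f$: the action on $\Delta_T^U$ comes from the fact (noted just before Theorem~\ref{Equivalence}) that $\Delta^U_T$ is a $W_{R\diff U}$-complex because $W_{R\diff U}=\Stab_W(W_{R\diff U})$ acts on $\St_\Delta(W_{R\diff U})$ preserving types, and the action on $\Pi_T^U$ is induced by $\Supp$, which is $W_{R\diff U}$-equivariant; the homotopy equivalence furnished by Theorem~\ref{Webb}/\ref{Equivalence} is equivariant for precisely this group.

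There is essentially no obstacle here — the theorem is a formal corollary, as the phrase ``by applying the homology functor'' in the text already signals. The only thing requiring a moment's care is the bookkeeping that the homotopy equivalence is equivariant for the \emph{same} group $W_{R\diff U}$ on both sides, which is exactly what Theorem~\ref{Equivalence} asserts, so nothing new needs to be proved. If one wished to be scrupulous about coefficients, one would fix a commutative ring and observe that equivariant homotopy equivalences induce $\ZZ G$-module (equivalently, after base change, $\K G$-module) isomorphisms in each degree; this is standard and needs no argument beyond functoriality.
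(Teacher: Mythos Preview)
Your proposal is correct and is exactly the paper's approach: the theorem is stated as an immediate consequence of applying the homology functor to the $W_{R\diff U}$-homotopy equivalence of Theorem~\ref{Equivalence}, and your write-up just unpacks that one line. Nothing further is needed.
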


Recall that a simplicial complex $\Sigma$ is 
\emph{Cohen-Macaulay} (over $\mathbb Z$) if 
for each $\sigma\in \Sigma$ we have $\tilde{H}_i(\lk\ \sigma,\mathbb Z)=0$ whenever 
$i<\dim(\lk_\Sigma\ \sigma)$, where 
$\lk_\Sigma$ denotes the \emph{link}:
\[\lk_\Sigma(\sigma)=\{\tau\in \Sigma\ :\ \tau\cap \sigma=\varnothing,\ 
\tau\cup \sigma\in 
\Sigma\}.\]
The complex $\Sigma$ is  \emph{homotopy Cohen-Macaulay} if 
for each $\sigma\in \Sigma$ we have $\pi_r(\lk_\Sigma(\sigma))=0$ whenever $r\leq \dim\lk_\Sigma(\sigma)-1$.  
Homotopy Cohen-Macaulay implies Cohen-Macaulay (over $\mathbb Z$), 
and Cohen-Macaulay implies 
Cohen-Macaulay over any field; see the appendix of~\cite{Bjorner}.

This section is devoted to establishing explicit descriptions for the modules in 
Theorem~\ref{Theorem:Main} 
when $\Delta$ is Cohen-Macaulay.  We start with the following

\begin{proposition}\label{CM:Typed}
Let $(W,R,\Lambda)$ be a well-framed system.  Let $U,T\subseteq R$.   If $\Delta$ is 
Cohen-Macaulay (resp. homotopy Cohen-Macaulay), then $\Delta_T^U$ is Cohen-Macaulay 
(resp. homotopy Cohen-Macaulay).  
\end{proposition}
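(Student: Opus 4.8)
The plan is to reduce the statement about the type-selected, pointed subcomplex $\Delta_T^U$ to known facts about type-selected subcomplexes of a Cohen--Macaulay \emph{balanced} complex, carried out in two stages: first pass from $\Delta$ to its star $\Delta^U=\St_\Delta(W_{R\diff U})$, then from $\Delta^U$ to the type-selection $\Delta_T^U=\Delta^U\mid_T$. For the second stage I would invoke the standard theorem of Munkres/Stanley (see the appendix of~\cite{Bjorner}) that a type-selected subcomplex of a balanced Cohen--Macaulay complex is again Cohen--Macaulay, and its homotopy analogue; by Proposition~\ref{Proposition:Simplicial} the complex $\Delta$ (hence $\Delta^U$) is balanced, so the only real content is the first stage.

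For the first stage, I would observe that $\Delta^U$ is the closed star of the single face $\sigma_0:=W_{R\diff U}\in\Delta$. The closed star of a face in a simplicial complex decomposes, combinatorially, as a join: writing $\tau_0$ for the minimal face carrying the vertices of $\sigma_0$, one has $\St_\Delta(\sigma_0)\cong \overline{\sigma_0}\ast \lk_\Delta(\sigma_0)$ at the level of abstract simplicial complexes, where $\overline{\sigma_0}$ is the full simplex on the vertices of $\sigma_0$. Since a full simplex is (homotopy) Cohen--Macaulay, and a join $\Sigma_1\ast\Sigma_2$ is (homotopy) Cohen--Macaulay precisely when both $\Sigma_1$ and $\Sigma_2$ are (again in the appendix of~\cite{Bjorner}; for the link direction this is essentially the definition of Cohen--Macaulayness of $\Delta$ itself), the question becomes whether $\lk_\Delta(\sigma_0)$ is (homotopy) Cohen--Macaulay. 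But that is immediate from the hypothesis: $\Delta$ Cohen--Macaulay means exactly that every link $\lk_\Delta(\sigma)$ is Cohen--Macaulay (with the appropriate vanishing of reduced homology below top dimension), and likewise in the homotopy setting. Hence $\Delta^U$ is (homotopy) Cohen--Macaulay, and it inherits a balanced structure from $\Delta$ via the type function restricted to vertices appearing in $\Delta^U$.

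Putting the two stages together: $\Delta^U$ is balanced and (homotopy) Cohen--Macaulay, so its type-$T$ selection $\Delta_T^U=\Delta^U\mid_T$ is (homotopy) Cohen--Macaulay by the type-selection theorem, which is what we wanted. The main obstacle I anticipate is purely bookkeeping: being careful that the join decomposition of a closed star is literally an isomorphism of \emph{abstract} simplicial complexes (not merely a homeomorphism), so that the combinatorial notion of Cohen--Macaulayness transfers cleanly, and checking that the balanced structure/type function behaves correctly under both passing to the star and type-selection — in particular that the colors appearing on $\lk_\Delta(\sigma_0)$ are exactly $R\diff\type(\sigma_0)$, so that selecting by $T$ on $\Delta^U$ really does land inside the join-factor where selection makes sense. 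Once these identifications are pinned down, each implication is a citation to a standard result, and no genuine computation is required.
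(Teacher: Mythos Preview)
Your proposal is correct and follows essentially the same two-step approach as the paper: first show that the closed star $\Delta^U$ inherits (homotopy) Cohen--Macaulayness from $\Delta$, then apply a type-selection theorem for balanced complexes to pass to $\Delta_T^U$. The paper treats the first step as an ``easy exercise'' without details, whereas you spell it out via the join decomposition $\St_\Delta(\sigma_0)\cong\overline{\sigma_0}\ast\lk_\Delta(\sigma_0)$; the paper cites Bj\"orner~\cite[Thm.~11.13]{Bjorner:Top} and Bj\"orner--Wachs--Welker~\cite{BWW} for the second step rather than Munkres/Stanley, but the content is the same.
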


\begin{proof} 
It is an easy exercise to show that stars inherit the Cohen-Macaulay 
(resp. homotopy Cohen-Macaulay) property.  
The nontrivial step is concluding that $\Delta_T^U$ is Cohen-Macaulay 
(resp. homotopy Cohen-Macaulay).  This follows 
from a type-selection theorem for pure simplicial complexes; see 
Bj\"orner~\cite[Thm. 11.13]{Bjorner:Top} and Bj\"orner, Wachs, and Welker~\cite{BWW}.
\end{proof}

\begin{lemma}\label{Lemma:Contractible}  
Let $(W,R,\Lambda)$ be a well-framed system.  
If $U\cap T\neq\varnothing$, then $\Delta_T^U$ is contractible.  
\end{lemma}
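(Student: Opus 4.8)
The claim is that if $U\cap T\neq\varnothing$, then $\Delta_T^U$ is contractible. The plan is to exhibit a cone point. Pick $r_i\in U\cap T$. I claim the vertex $v_0:=W_{R\diff\{r_i\}}$ of $\Delta$ (corresponding to $\rho(v_0)=\lambda_i$) lies in $\Delta_T^U$ and is a cone point of it. First I would check membership: since $r_i\in T$, the type $\{r_i\}$ of $v_0$ is contained in $T$, so $v_0\in\Delta_T$; and since $v_0\supseteq W_{R\diff U}$ as cosets (because $R\diff U\subseteq R\diff\{r_i\}$, forcing $W_{R\diff U}\subseteq W_{R\diff\{r_i\}}$, i.e. $v_0\leq W_{R\diff U}$ in the reverse-inclusion order so they are joinable — indeed $v_0\cup W_{R\diff U}=v_0\in\Delta$), we have $v_0\in\St_\Delta(W_{R\diff U})=\Delta^U$. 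Hence $v_0\in\Delta_T^U$.

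Next I would show $v_0$ is a cone point of $\Delta_T^U$, i.e. every simplex $\sigma\in\Delta_T^U$ is joinable to $v_0$ inside $\Delta_T^U$. Write $\sigma=gW_{R\diff J}$ with $J\subseteq T$. Joinability in $\Delta$ of $\sigma$ and $v_0$ amounts to $gW_{R\diff J}\cap W_{R\diff\{r_i\}}\neq\varnothing$ as cosets, equivalently (after translating) that $r_i\notin J$ and $gW_{R\diff(J\cup\{r_i\})}$ is a common refinement — more carefully, since $\Delta(W,R)$ is a simplicial poset, two faces $gW_{R\diff J}$ and $h W_{R\diff K}$ are joinable iff there is a face below both, and the natural candidate for $\sigma * v_0$ is $gW_{R\diff(J\cup\{r_i\})}$, which exists precisely when $r_i\notin J$ and these two cosets intersect. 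The key point is that $\sigma\in\Delta^U$ means $\sigma$ is joinable to $W_{R\diff U}$, and $r_i\in U$; I would use this together with the fact that being joinable to $W_{R\diff U}$ constrains $\sigma$ (it must intersect that big coset) to deduce joinability to the smaller coset $v_0 = W_{R\diff\{r_i\}}\supseteq W_{R\diff U}$. Concretely: if $gW_{R\diff J}$ meets $W_{R\diff U}$ then a fortiori it meets $W_{R\diff\{r_i\}}$, since $W_{R\diff U}\subseteq W_{R\diff\{r_i\}}$; so $\sigma$ and $v_0$ are joinable in $\Delta$. Finally, the join $\sigma * v_0$ has type $J\cup\{r_i\}\subseteq T$ (using $r_i\in T$), so it lies in $\Delta_T$, and it lies in $\Delta^U$ because it contains the face $\sigma$ which is joinable to $W_{R\diff U}$ — more directly, its type is still disjoint from nothing problematic and it still meets $W_{R\diff U}$. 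Hence $\sigma * v_0\in\Delta_T^U$, so $v_0$ is a cone point.

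With a cone point in hand, contractibility is immediate (a cone point gives a straight-line deformation retraction, as in the proof of Proposition~\ref{Prop:Contraction}, ignoring the equivariance there).

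\textbf{Main obstacle.} The only delicate bookkeeping is the coset-intersection argument showing joinability is inherited when passing from the coset $W_{R\diff U}$ to the larger coset $W_{R\diff\{r_i\}}$, and checking that the join $\sigma * v_0$ — not just $\sigma$ — still lies in $\Delta^U$ (i.e. is itself joinable to $W_{R\diff U}$). I expect this to follow cleanly from the fact that in the simplicial poset $\Delta(W,R)$ two faces are joinable iff their underlying cosets intersect, together with the inclusion $W_{R\diff U}\subseteq W_{R\diff\{r_i\}}$; the subtlety is just to phrase "joinable to $W_{R\diff U}$" correctly as a coset-intersection condition and verify it is preserved under forming the join with $v_0$. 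No genuinely hard geometry is needed — well-framedness is used only implicitly via Proposition~\ref{Proposition:Simplicial} so that $\Delta$ is a genuine simplicial complex and these join manipulations are legitimate.
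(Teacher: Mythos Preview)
Your argument is correct and is essentially the same as the paper's, just spelled out in more detail: the paper's one-line proof observes that $\Delta^U=\{W_{R\diff U}\}*\lk_{\Delta}(W_{R\diff U})$, so type-selecting to $T$ gives a join of the (nonempty, since $U\cap T\neq\varnothing$) face of $W_{R\diff U}$ of type $U\cap T$ with the type-selected link, hence a cone. Your choice of cone point $v_0=W_{R\diff\{r_i\}}$ is precisely a vertex of that nonempty face, and your coset-intersection verification is the explicit unpacking of the join decomposition; the ``main obstacle'' you flag dissolves once you note that $v_0$ is a vertex of $W_{R\diff U}$, so $(\sigma*v_0)*W_{R\diff U}=\sigma*W_{R\diff U}$, which exists since $\sigma\in\Delta^U$.
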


\begin{proof}
This is immediate from $\Delta^U=\{W_{R\diff U}\}*\lk_{\Delta}W_{R\diff U}$.
\end{proof}

\begin{theorem}\label{Homology}
Let $(W,R,\Lambda)$ be a well-framed system.  Let $U,T\subseteq R$, and assume that $\Delta$ is Cohen-Macaulay.  
If $U\cap T\neq\varnothing$, then the top homology 
$\tilde{H}_{|T|-1}(\Delta_T^U)$ is trivial; otherwise, 
\begin{equation}\label{Modules}\tilde{H}_{|T|-1}(\Delta_T^U)
\cong \sum_{J\subseteq T} (-1)^{|T\diff J|}\Ind_{W_{ R\diff (U\cup J)}}^{W_{R\diff U}} \triv
\end{equation}
as virtual $(W_{R\diff U})$-modules.
\end{theorem}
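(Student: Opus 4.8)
The plan is to dispose of the degenerate case $U\cap T\neq\varnothing$ at once and then reduce the substantive case to a chain-level computation in the representation ring of $W_{R\diff U}$. If $U\cap T\neq\varnothing$, Lemma~\ref{Lemma:Contractible} says $\Delta_T^U$ is contractible, so all of its reduced homology vanishes; in particular $\tilde H_{|T|-1}(\Delta_T^U)=0$. So suppose $U\cap T=\varnothing$. Since the identity lies in $W_{R\diff T}\cap W_{R\diff U}$, the coset $W_{R\diff T}$ is joinable with $W_{R\diff U}$ and is therefore a simplex of type $T$ in $\Delta_T^U$; together with the type bound this gives $\dim\Delta_T^U=|T|-1$. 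By Proposition~\ref{CM:Typed}, $\Delta_T^U$ is Cohen--Macaulay, so (taking $\sigma=\varnothing$ in the definition) its reduced homology is concentrated in degree $|T|-1$. Passing to virtual characters, the equivariant Euler--Poincar\'e relation then gives, in the representation ring of $W_{R\diff U}$,
\[(-1)^{|T|-1}\bigl[\tilde H_{|T|-1}(\Delta_T^U)\bigr]=\sum_{i\geq -1}(-1)^i\bigl[\tilde C_i(\Delta_T^U)\bigr],\]
so it suffices to identify the reduced chain modules $\tilde C_i(\Delta_T^U)$ as $W_{R\diff U}$-modules.

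To do that I would sort the simplices of $\Delta_T^U$ by type. Fix $S\subseteq T$, so $S\cap U=\varnothing$. A simplex of type $S$ in $\Delta_T^U$ is a coset $gW_{R\diff S}$ joinable with $W_{R\diff U}$; since in the coset poset two simplices are joinable exactly when the cosets intersect, this means $gW_{R\diff S}\cap W_{R\diff U}\neq\varnothing$. Choosing the representative $g$ inside that intersection, one checks that $g(W_{R\diff S}\cap W_{R\diff U})\mapsto gW_{R\diff S}$ is a $W_{R\diff U}$-equivariant bijection from $W_{R\diff U}/(W_{R\diff S}\cap W_{R\diff U})$ onto the set of type-$S$ simplices of $\Delta_T^U$. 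The one genuinely reflection-theoretic ingredient is the parabolic intersection identity $W_{R\diff S}\cap W_{R\diff U}=W_{R\diff(S\cup U)}$, which I would extract from the Galois correspondence (Theorem~\ref{Galois}): $W_{R\diff S}$ is the pointwise stabilizer of $\Supp(W_{R\diff S})=\bigcap_{r_i\notin S}H_i$, so $W_{R\diff S}\cap W_{R\diff U}$ is the pointwise stabilizer of the linear span of $\bigcap_{r_i\notin S}H_i$ and $\bigcap_{r_i\notin U}H_i$; since $\Lambda$ is a basis and $H_i=\LinSpan(\Lambda\diff\{\lambda_i\})$, one computes $\bigcap_{r_i\notin S}H_i=\LinSpan\{\lambda_j:r_j\in S\}$, whence that span equals $\bigcap_{r_i\notin S\cup U}H_i=\Supp(W_{R\diff(S\cup U)})$, whose pointwise stabilizer is $W_{R\diff(S\cup U)}$ by Theorem~\ref{Galois} again.

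Granting this, the type-$S$ simplices of $\Delta_T^U$ form the $W_{R\diff U}$-set $W_{R\diff U}/W_{R\diff(S\cup U)}$. Because $\Delta$ is balanced (Proposition~\ref{Proposition:Simplicial}) and $W$ preserves types, any element of $W_{R\diff U}$ stabilizing a simplex must fix each of its vertices (they carry distinct types) and hence fixes the simplex pointwise; so no element inverts a simplex, and each reduced chain group is literally the permutation module on the simplices of that dimension. Thus $\tilde C_{|S|-1}(\Delta_T^U)$ receives the summand $\mathbb C[W_{R\diff U}/W_{R\diff(S\cup U)}]=\Ind_{W_{R\diff(S\cup U)}}^{W_{R\diff U}}\triv$ for each $S\subseteq T$, with $S=\varnothing$ accounting for $\tilde C_{-1}=\triv$. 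Substituting into the displayed relation,
\[\bigl[\tilde H_{|T|-1}(\Delta_T^U)\bigr]=(-1)^{|T|-1}\sum_{S\subseteq T}(-1)^{|S|-1}\Ind_{W_{R\diff(S\cup U)}}^{W_{R\diff U}}\triv=\sum_{J\subseteq T}(-1)^{|T\diff J|}\Ind_{W_{R\diff(U\cup J)}}^{W_{R\diff U}}\triv,\]
which is the claimed formula. The step I expect to demand real care is the parabolic intersection identity together with the precise bookkeeping of which cosets constitute the type-$S$ faces of the star $\Delta^U$; everything after that is formal manipulation in the representation ring.
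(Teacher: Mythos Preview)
Your proof is correct and follows essentially the same route as the paper: dispose of $U\cap T\neq\varnothing$ via Lemma~\ref{Lemma:Contractible}, then use Cohen--Macaulayness (Proposition~\ref{CM:Typed}) together with the Hopf trace/Euler--Poincar\'e relation to reduce to identifying the chain modules as permutation modules $\Ind_{W_{R\diff(U\cup S)}}^{W_{R\diff U}}\triv$. The paper packages the combinatorial input as Lemmas~\ref{Parabolic:Typed} and~\ref{Induced} (the latter invoking the intersection condition, equivalent to $\Delta$ being simplicial), whereas you derive the key identity $W_{R\diff S}\cap W_{R\diff U}=W_{R\diff(S\cup U)}$ directly from the Galois correspondence; these are two equivalent ways to extract the same fact from well-framedness, and the remaining argument (which the paper only cites as ``the standard argument \dots\ as detailed by Solomon'') is exactly what you spell out.
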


\begin{remark}
Specializing to type $A$ and to Ehrenborg and Jung's objects (see 
Section~\ref{Section:Ehrenborg}), Lemma~\ref{Lemma:Contractible} 
translates to
$\Delta_{\vec{c}}$ being contractible whenever $\vec{c}$ ends with a zero; 
this is precisely Lemma 3.1 in~\cite{Ehrenborg}.  The condition that 
$U\cap T= \varnothing$ collapses to the condition that 
$\vec{c}$ not end with zero.

We also note that 
the virtual modules in~\eqref{Modules} are well-known to be the natural 
generalization of ribbon representations to all Coxeter and Shephard groups; see~\cite{Solomon:Decomp} and~\cite{OrlikReinerShepler}.
\end{remark}

Though the proof of Theorem~\ref{Homology} is entirely standard, we first need a particular 
description of $\Delta_T^U$ when $U\cap T=\varnothing$.  
The following intermediate description is straightforward.

\begin{lemma}\label{Parabolic:Typed}  
Let $(W,R)$ be a well-generated system.  Then 
for $U,T\subseteq R$ we have
\begin{equation}\label{Delta:description}
\Delta_T^U=\{gW_{R\diff J}\ :\ g\in W_{R\diff U},\ J\subseteq T\}.
\end{equation}
\end{lemma}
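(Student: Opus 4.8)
\textbf{Proof plan for Lemma~\ref{Parabolic:Typed}.}
The plan is to unwind the definitions of $\Delta^U$, $\Delta_T^U$, and the closed star, and then recognize the resulting set of cosets. First I would recall that $\Delta^U\Def\St_\Delta(W_{R\diff U})$, and that in $\Delta(W,R)$ two faces $gW_J$ and $hW_{J'}$ are joinable (i.e.\ have a common upper bound in the reverse-inclusion order, equivalently the cosets $gW_J$ and $hW_{J'}$ intersect) precisely when $gW_J\cap hW_{J'}\ne\varnothing$, in which case their join is the coset $g'W_{J\cup J'}$ for any $g'$ in the intersection. Applying this with $hW_{J'}=W_{R\diff U}$: a face $gW_{R\diff J}$ lies in $\St_\Delta(W_{R\diff U})$ iff $gW_{R\diff J}\cap W_{R\diff U}\ne\varnothing$. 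Since $R\diff U\subseteq R\diff J$ forces nothing automatically, I would instead argue directly: the intersection is nonempty iff $g\in W_{R\diff U}\cdot W_{R\diff J}$, and because $W_{R\diff U}$ and $W_{R\diff J}$ are standard parabolics, this product is itself a union of $W_{R\diff U}$-cosets; the key point is that $gW_{R\diff J}\cap W_{R\diff U}\ne\varnothing$ iff $gW_{R\diff J}=g'W_{R\diff J}$ for some $g'\in W_{R\diff U}$. Hence $\Delta^U=\{gW_{R\diff J}: g\in W_{R\diff U},\ J\subseteq R\}$.

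The second step is to intersect with the type-selected subcomplex. By definition $\Delta_T^U=\Delta^U\mid_T=\{\sigma\in\Delta^U:\type(\sigma)\subseteq T\}$, and recall from the type function that $\type(gW_{R\diff J})=J$. So imposing $\type(\sigma)\subseteq T$ is exactly the condition $J\subseteq T$. Combining with the description of $\Delta^U$ from the first step yields
\[
\Delta_T^U=\{gW_{R\diff J}\ :\ g\in W_{R\diff U},\ J\subseteq T\},
\]
which is~\eqref{Delta:description}. One should check that the presentation is well defined: if $g\in W_{R\diff U}$ and $gW_{R\diff J}=g''W_{R\diff J}$, one may always take the representative $g''$ inside $W_{R\diff U}$ as well (since $W_{R\diff J}\supseteq W_{R\diff(J\cup U)}$ plays no role here and the coset relation only shifts $g$ by an element of $W_{R\diff J}$, which need not stay in $W_{R\diff U}$)---so the honest statement is that the \emph{set} of cosets arising is as claimed, not that every representative lies in $W_{R\diff U}$. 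This subtlety is the only place care is needed.

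The main obstacle, such as it is, is the joinability criterion: one must be sure that in the simplicial poset $\Delta(W,R)$, two cosets are joinable exactly when they intersect as subsets of $W$, and that their join is then represented by any common element. This is standard for parabolic coset posets (see~\cite{Brown} for simple systems and~\cite{BabsonReiner} for general minimal $R$, both cited above), and it is exactly what makes $\Delta(W,R)$ a simplicial poset; I would cite it rather than reprove it. Everything else is a direct translation through the definitions of $\St$, $\mid_T$, and $\type$, so the lemma is genuinely routine once that criterion is in hand.
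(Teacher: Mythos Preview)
Your proposal is correct and matches the paper's approach: the paper simply declares the lemma ``straightforward'' and gives no proof, and your argument---unwinding the definitions of $\St$, $\type$, and $\mid_T$ via the joinability criterion (cosets are joinable iff they intersect)---is exactly the routine verification the paper has in mind. The subtlety you flag about representatives is handled correctly: the claim is about the \emph{set} of cosets, not about every representative lying in $W_{R\diff U}$.
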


When $\Delta(W,R)$ is a simplicial complex, one has that $(W,R)$ satisfies 
the \emph{intersection condition} 
\[\bigcap_{r\in R\diff J} W_{R\diff \{r\}}=W_{J}\quad\text{for all $J\subseteq R$}.\]
In fact, satisfying the intersection condition is equivalent to $\Delta(W,R)$ being 
a simplicial complex; see~\cite[Cor. 2.6]{BabsonReiner}.  The following lemma 
is a straightforward application of the intersection property, and the desired 
description of $\Delta_T^U$ is obtained by applying the lemma to~\eqref{Delta:description}.

\begin{lemma}\label{Induced}  
Let $(W,R)$ be well-framed.  If $U\cap T=\varnothing$, then the map
\begin{align*}
\{gW_{R\diff J}\ :\ g\in W_{R\diff U},\ J\subseteq T\}
&\longrightarrow 
\{gW_{R\diff (U\cup J)}\ :\ g\in W_{R\diff U},\ J\subseteq T\}\\
gW_{R\diff J}&\longmapsto gW_{R\diff (U\cup J)}
\end{align*}
is a $(W_{R\diff U})$-poset isomorphism.
\end{lemma}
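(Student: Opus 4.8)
The plan is to verify directly that the stated map is a well-defined bijection and that both it and its inverse are order-preserving, all equivariantly. First I would record the hypothesis carefully: since $(W,R)$ is well-framed, Proposition~\ref{Proposition:Simplicial} tells us $\Delta(W,R)$ is a simplicial complex, so the intersection condition $\bigcap_{r\in R\diff J}W_{R\diff\{r\}}=W_J$ holds for all $J\subseteq R$. This is the engine that drives everything. I would also fix notation: write $A=\{gW_{R\diff J}:g\in W_{R\diff U},\ J\subseteq T\}$ and $B=\{gW_{R\diff(U\cup J)}:g\in W_{R\diff U},\ J\subseteq T\}$, both regarded as subposets of $\Delta(W,R)$ ordered by reverse inclusion of cosets, with $W_{R\diff U}$ acting by left translation. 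Note $B$ makes sense because $U\cap T=\varnothing$ forces $U\cup J\subseteq R$ with $J\cap U=\varnothing$, so $R\diff(U\cup J)$ ranges over the subsets of $R\diff U$ contained in $(R\diff U)\diff T$; in particular $B$ is visibly a subcomplex of $\Delta(W_{R\diff U},R\diff U)$.

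Next I would check that the map $\psi:gW_{R\diff J}\mapsto gW_{R\diff(U\cup J)}$ is well-defined, i.e. independent of the chosen coset representative. If $gW_{R\diff J}=g'W_{R\diff J}$ then $g^{-1}g'\in W_{R\diff J}$; since $U\subseteq R\diff J$ (because $U\cap T=\varnothing$ and $J\subseteq T$), we have $W_{R\diff J}\subseteq W_{R\diff\varnothing}$ but more to the point $R\diff(U\cup J)\subseteq R\diff J$, so $W_{R\diff(U\cup J)}\subseteq W_{R\diff J}$ and $g^{-1}g'\in W_{R\diff J}$ does \emph{not} immediately land in $W_{R\diff(U\cup J)}$ --- so instead I argue the other direction: actually the correct check is that $W_{R\diff(U\cup J)}\supseteq$ nothing helpful, so I must be more careful. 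The clean fix is to prove injectivity and well-definedness together using the intersection condition. Concretely, $gW_{R\diff(U\cup J)}=g'W_{R\diff(U\cup J')}$ as cosets with $g,g'\in W_{R\diff U}$ and $J,J'\subseteq T$; comparing types (the action preserves the type function of Section~\ref{Section:Well-framed}) forces $U\cup J=U\cup J'$, hence $J=J'$ since $J,J'\subseteq T$ and $U\cap T=\varnothing$. Then $g^{-1}g'\in W_{R\diff(U\cup J)}$. Now I want to conclude $gW_{R\diff J}=g'W_{R\diff J}$, i.e. $g^{-1}g'\in W_{R\diff J}$: but $g^{-1}g'\in W_{R\diff(U\cup J)}\subseteq W_{R\diff J}$ since $R\diff(U\cup J)\subseteq R\diff J$. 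That settles both injectivity and well-definedness of $\psi$ (take $g=g'$ for well-definedness). Surjectivity is built into the definition of $B$. So $\psi$ is a bijection of underlying sets.

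For the poset structure, I would argue as follows. Suppose $gW_{R\diff J}\leq g'W_{R\diff J'}$ in $A$, meaning $gW_{R\diff J}\supseteq g'W_{R\diff J'}$, which forces $J\subseteq J'$ and $g^{-1}g'\in W_{R\diff J}$. I must show $gW_{R\diff(U\cup J)}\supseteq g'W_{R\diff(U\cup J')}$, i.e. $U\cup J\subseteq U\cup J'$ (clear from $J\subseteq J'$) and $g^{-1}g'\in W_{R\diff(U\cup J)}$. We know $g^{-1}g'\in W_{R\diff J}$ and, since $g,g'\in W_{R\diff U}$, also $g^{-1}g'\in W_{R\diff U}$. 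Hence $g^{-1}g'\in W_{R\diff J}\cap W_{R\diff U}$, and by the intersection condition applied inside $\Delta(W,R)$ --- writing $R\diff J$ and $R\diff U$ as the relevant index sets, $W_{R\diff J}\cap W_{R\diff U}=W_{(R\diff J)\cap(R\diff U)}=W_{R\diff(J\cup U)}$ --- we get exactly $g^{-1}g'\in W_{R\diff(U\cup J)}$, as needed. The converse implication (that $\psi^{-1}$ is order-preserving) runs the same way, reading the chain of equalities in reverse and using $R\diff(U\cup J)\subseteq R\diff J$ to recover $g^{-1}g'\in W_{R\diff J}$ together with the type comparison to recover $J\subseteq J'$. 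Finally, equivariance is trivial: for $h\in W_{R\diff U}$, $\psi(h\cdot gW_{R\diff J})=\psi(hgW_{R\diff J})=hgW_{R\diff(U\cup J)}=h\cdot\psi(gW_{R\diff J})$, and $hg\in W_{R\diff U}$ since $W_{R\diff U}$ is a group.

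The only real subtlety --- the step I expect to be the main obstacle --- is the use of the intersection condition to identify $W_{R\diff J}\cap W_{R\diff U}$ with $W_{R\diff(U\cup J)}$ while keeping careful track of the fact that cosets $gW_{R\diff J}$ in $A$ may have many representatives, so that ``$g^{-1}g'$'' is only defined up to right multiplication by elements of $W_{R\diff J}$; the type-function argument (relying on balancedness of $\Delta(W,R)$ from Section~\ref{Section:Well-framed}) is what pins down $J$ and lets the coset comparison go through cleanly. Everything else is bookkeeping with parabolic cosets. I would present this as: (1) $B$ is a well-defined subcomplex; (2) $\psi$ is a well-defined bijection via type-comparison plus the inclusion $W_{R\diff(U\cup J)}\subseteq W_{R\diff J}$; (3) $\psi$ and $\psi^{-1}$ are order-preserving via the intersection condition; (4) $\psi$ is $W_{R\diff U}$-equivariant.
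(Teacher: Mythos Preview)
Your approach is correct and is exactly what the paper intends: the lemma is stated as ``a straightforward application of the intersection property,'' and you correctly identify $W_{R\diff J}\cap W_{R\diff U}=W_{R\diff(J\cup U)}$ (a consequence of the intersection condition) as the key step. One small clean-up: your parenthetical ``(take $g=g'$ for well-definedness)'' does not actually establish well-definedness; the real argument is the one you give for order-preservation---namely, if $g,g'\in W_{R\diff U}$ represent the same coset $gW_{R\diff J}=g'W_{R\diff J}$, then $g^{-1}g'\in W_{R\diff J}\cap W_{R\diff U}=W_{R\diff(U\cup J)}$---so just move that computation up front.
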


The proof of Theorem~\ref{Homology} now follows:

\begin{proof}[Proof of Theorem~\ref{Homology}]  
The first claim follows from Lemma~\ref{Lemma:Contractible}.  When $U\cap T=\varnothing$, 
the result follows from the description of $\Delta_T^U$ 
obtained through Lemmas~\ref{Parabolic:Typed} and~\ref{Induced} 
by applying the standard argument using Cohen-Macaulayness and the Hopf 
trace formula, as is detailed by Solomon in~\cite{Solomon:CM}.  Other good sources 
include~\cite{Stanley:Aspects} and the notes of Wachs~\cite{Wachs:Tools}.
\end{proof}

\section{Specializing to objects of  Ehrenborg and Jung}\label{Section:Ehrenborg}
In the case of type $A$, Ehrenborg and Jung constructed \emph{pointed objects} 
$\Delta_n,\Pi^\bullet_{n}$ and subcomplexes $\Delta_{\vec{c}},\Pi^\bullet_{\vec{c}}$ 
indexed by compositions $\vec{c}=(c_1,c_2,\ldots, c_k)$ of $n$ that have 
positive entries $c_1,\ldots, c_{k-1}>0$ but \emph{nonnegative} last entry $c_k\geq 0$.  
Figure~\ref{Fig:Ehrenborg:S3:Poset} shows Ehrenborg and Jung's $\Delta_2$ and 
$\Pi^\bullet_2$, each carrying an action of $\mathfrak S_2$.  In~\cite{Ehrenborg}, they show 
that the top homology of $\Delta_{\vec{c}}$ is homotopy equivalent to 
the order complex of $\Pi^\bullet_{\vec{c}}\diff\{\hat{1}\}$, and that the top homology is given by 
a ribbon Specht module of $\mathfrak S_n$ with row sizes determined by $\vec{c}$.  

\begin{figure}[hbt]
\begin{center}
{
\includegraphics[scale=2.20]{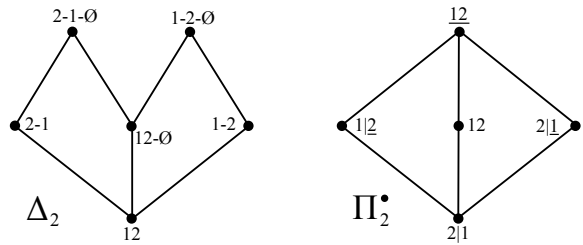}
}
\caption{Ehrenborg and Jung's pointed objects.}
\label{Fig:Ehrenborg:S3:Poset}
\end{center}
\end{figure}

The aim of this section is to present the underlying geometry of Ehrenborg and Jung's objects 
by showing how our objects $\Delta_T^U,\Pi_T^U$ specialize to theirs.  It will 
follow that applying our results to this specialization recovers their 
main results, even upgrading their homotopy equivalence to an equivariant one.  
The translations between objects of this section are well-known, and we will largely 
follow the discussion of Aguiar and Mahajan; see~\cite[\S 1.4]{Aguiar}.

Let $W=\mathfrak S_{n+1}$ and let $R=\{r_1,r_2,\ldots, r_n\}$ be the usual 
generating set of adjacent transpositions, i.e., 
\[r_i=(i,i+1).\]
The symmetric group $\mathfrak S_{n+1}$ is the symmetry group 
of the standard $n$-simplex $\P_n$ with vertices labeled by $\{1,2,\ldots, n+1\}$.  The 
barycentric subdivision $B(\P_n)$ of the boundary of $\P_n$ 
is homeomorphic to the Coxeter complex via 
radial projection.  Letting $\lambda_i$ be the vertex of $B(\P_n)$ indexed 
by $\{1,2,\ldots, i\}$ yields a well-framed system $(W,R,\Lambda)$ with 
$\rho(\Delta)=B(\P_n)$.  Note that, in particular, 
$\Lambda$ lies on the \emph{distinguished face} 
$\P_{n-1}$ of $\P_n$ that has vertex set $\{1,2,\ldots, n\}$; see Figure~\ref{E:intro}.  
We call $(W,R,\Lambda)$ the \emph{standard system for $\mathfrak S_{n+1}$}.

\begin{figure}[hbt]
\begin{center}
{
\includegraphics[scale=2.50]{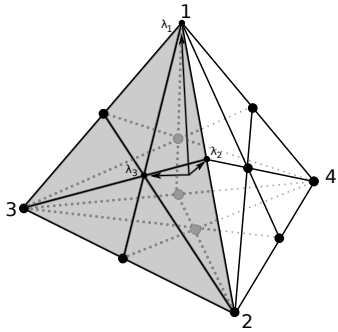}
}
\caption{$W=\mathfrak S_{4}$ with the subdivision of facet $\P_2$ shaded.}
\label{E:intro}
\end{center}
\end{figure}

A \emph{set composition} of $n+1$ is an ordered partition 
$B_1 \Dash B_2\Dash \cdots \Dash B_k$ of 
$[n+1]$ with nonempty blocks.  
The collection of all set compositions of $n+1$ form a simplicial complex $\Sigma_{n+1}$ under 
refinement, with chambers having 
$n+1$ singleton blocks; see Figure~\ref{Fig:Ehrenborg:S3}.  The \emph{type} of a set composition is the composition of its block sizes, i.e., 
\[\type(B_1 \Dash B_2\Dash \cdots \Dash B_k)=(|B_1|,|B_2|,\ldots,|B_k|).\] 
Given a composition $\vec{c}$ of $n+1$, the subcomplex of $\Sigma_{n+1}$ generated by 
the faces of type $\vec{c}$ is denoted $\Sigma_{\vec{c}}$.

The map $\phi$ obtained by letting
\[\{\lambda_{i_1},\lambda_{i_2},\ldots, \lambda_{i_k}\}\mapsto 1,\ldots, i_1 \Dash (i_1+1),\ldots, i_2\Dash\cdots\Dash(i_{k}+1),\ldots, n+1,\]
and extending by the action of $W$, is an equivariant isomorphism 
$\rho(\Delta)\rightarrow \Sigma_{n+1}$; see Figure~\ref{Fig:Ehrenborg:Map}. 
 Under this isomorphism, a face $B_1\Dash B_2\Dash \cdots\Dash B_k$ of type $\vec{c}$ 
corresponds to a face of type
\[{\rm{Des}}(\vec{c}):=\{r_{|B_1|},r_{|B_1|+|B_2|},\ldots, r_{|B_1|+\cdots+|B_{k-1}|}\}.\]
Note also that  those faces of $B(\P_{n})$ in the star of $\P_{n-1}$ either have $n+1$ 
contained in the last block or have last block equal to $\{n+1\}$.

\begin{figure}[hbt]
\begin{center}
{
\includegraphics[scale=2.50]{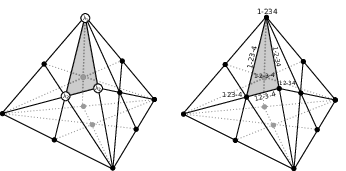}
}
\caption{}
\label{Fig:Ehrenborg:Map}
\end{center}
\end{figure}

The lattice of hyperplane intersections $\L_W$ for $W$ also has a simple 
description obtained from set compositions.  
Under the identification of $\rho(\Delta)$ and $\Sigma_{n+1}$, the support map 
corresponds to forgetting the order on the blocks.  That is,
\[{\rm Supp}(B_1\Dash B_2\Dash \cdots\Dash B_k)=B_1|B_2|\cdots|B_k.\]
The induced partial order is given by refinement.

A \emph{pointed set composition} of $n$ is an ordered partition 
$B_1\Dash B_2\Dash \cdots\Dash B_k$ of 
$[n]$ with last block $B_k$ possibly empty.  
We denote the collection of all pointed set compositions of $[n]$ by 
$\Delta_n^\bullet$.  The previous discussion shows that by removing $n+1$ from blocks in 
elements of $\Sigma_{n+1}$, the barycentric subdivision of the facet $\P_{n-1}$ 
can be identified with $\Delta_n^\bullet$; see Figures~\ref{Fig:Ehrenborg:Map} 
and~\ref{Fig:Ehrenborg}.  More accurately, 
$\Delta_n^\bullet$ is $\mathfrak S_n$-equivariantly isomorphic to $\Delta^{\{r_n\}}$.  
Given a composition $\vec{c}$ of $n$ with last part possibly $0$, the corresponding selected 
complex is denoted $\Delta_{\vec{c}}^\bullet$, which is Ehrenborg and Jung's complex 
$\Delta_{\vec{c}}$; see Figure~\ref{Fig:Ehrenborg}.  By distinguishing terminal blocks before 
taking the image of $\Delta_{\vec{c}}$ under the support map, one obtains their pointed poset 
$\Pi_{\vec{c}}^\bullet$ after removing any (possibly distinguished) empty blocks; see 
Figure~\ref{Fig:Ehrenborg:S3}.  

\begin{figure}[hbt]
\begin{center}
{
\includegraphics[scale=2.20]{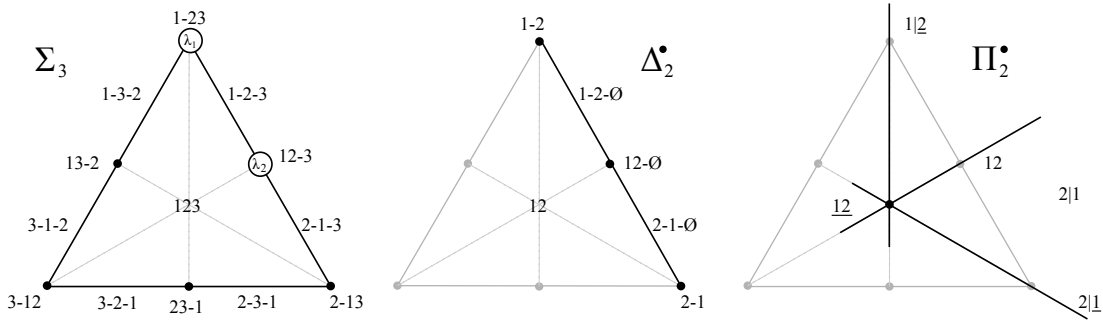}
}
\caption{The geometry of Ehrenborg and Jung's $\Delta_{2}$ and $\Pi^\bullet_2$ from 
Figure~\ref{Fig:Ehrenborg:S3:Poset}.}
\label{Fig:Ehrenborg:S3}
\end{center}
\end{figure}

Ehrenborg and Jung distinguish a block by underlining it.  Thus, the above map is 
\begin{align*}
\Delta_{\vec{c}}&\longrightarrow \Pi_{\vec{c}}^\bullet\\
B_1\Dash B_2\Dash\cdots\Dash B_k&\longmapsto B_1 | B_2|\cdots |\underline{B_k}.
\end{align*}

\begin{figure}[hbt]
\begin{center}
{
\psfrag{c=110}{\tiny{$\vec{c}=(1,1,1,0)$}}
\psfrag{c=120}{\tiny{$\vec{c}=(1,2,0)$}}
\psfrag{c=111}{\tiny{$\vec{c}=(1,1,1)$}}
\psfrag{c=210}{\tiny{$\vec{c}=(2,1,0)$}}
\psfrag{c=30}{\tiny{$\vec{c}=(3,0)$}}
\psfrag{c=21}{\tiny{$\vec{c}=(2,1)$}}
\psfrag{c=12}{\tiny{$\vec{c}=(1,2)$}}
\psfrag{c=3}{\tiny{$\vec{c}=(3)$}}
\includegraphics{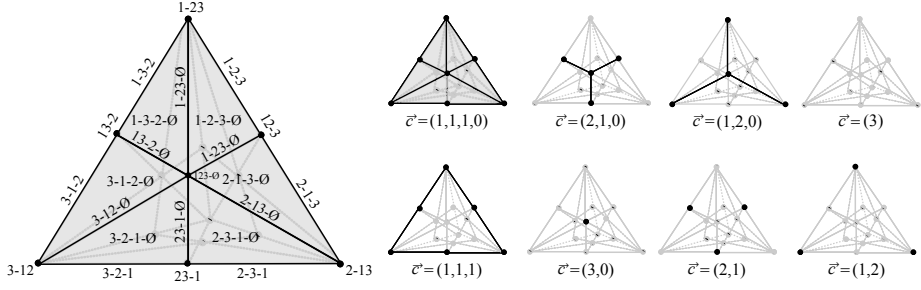}
}
\caption{Ehrenborg and Jung's $\Delta_{\vec{c}}$ for all possible choices of $\vec{c}\vdash 3$ 
(with last part is allowed to be $0$).}
\label{Fig:Ehrenborg}
\end{center}
\end{figure}

The following proposition summarizes the correspondence outlined above.

\begin{proposition}\label{Prop:Conversion}
Let $(W,R,\Lambda)$ be the standard 
system for $\mathfrak S_{n+1}$, and let $\vec{c}=(c_1,c_2,\ldots,c_k)$ be 
a composition of $n$ with last part $c_k$ possibly $0$.  
Then the following diagram composed of $\mathfrak S_n=W_{R\diff \{r_n\}}$-equivariant maps is commutative:
\[\xymatrixcolsep{.7pc}\xymatrixrowsep{3pc}
\xymatrix{
 & \Delta_{ {\rm{Des}}(\vec{c})}^{\{r_n\}}
\ar@{->>}[d]^-{}_-{\Supp} 
\ar@{<->}[rrrrr]^-{\sim}_-{} & & & & &
\Delta_{\vec{c}}\ar@{->>}[d]
& {\scriptstyle{B_1\Dash B_2\Dash \cdots\Dash B_k}} \ar@{|->}[d]^{\Supp}
\\
 & \Pi_{{\rm{Des}}(\vec{c})}^{\{r_n\}} \ar@{<.>}[rrrrr]^-\sim_-{} & & & & &{\Pi_{\vec{c}}^\bullet} & {\scriptstyle{B_1|B_2|\cdots|\underline{B_k}}}
}
\]
\end{proposition}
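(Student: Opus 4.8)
The plan is to verify commutativity of the square by unwinding all the identifications established earlier in this section, treating the proposition as a bookkeeping statement rather than something requiring new ideas. The key observation is that the whole diagram is assembled from three pieces that have already been discussed: (i) the equivariant isomorphism $\rho(\Delta)\xrightarrow{\sim}\Sigma_{n+1}$ sending $gW_{R\diff J}$ to a set composition, which intertwines $\Supp$ on the left (linear span of a face) with ``forget the order on blocks'' on the right, as noted in the passage giving $\mathrm{Supp}(B_1\Dash\cdots\Dash B_k)=B_1|\cdots|B_k$; (ii) the restriction of this isomorphism to the star $\Delta^{\{r_n\}}$, which was identified $\mathfrak S_n$-equivariantly with $\Delta_n^\bullet$ by deleting the symbol $n+1$ from every block; and (iii) the type-selection, under which a face of type $\vec c$ on the pointed side corresponds to a face of type $\mathrm{Des}(\vec c)$ on the parabolic-coset side, by the displayed formula for $\mathrm{Des}(\vec c)$.

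Concretely, I would proceed as follows. First I would recall from Lemma~\ref{Parabolic:Typed} the explicit description $\Delta_{\mathrm{Des}(\vec c)}^{\{r_n\}}=\{gW_{R\diff J}: g\in W_{R\diff\{r_n\}},\ J\subseteq \mathrm{Des}(\vec c)\}$, and match its facets (the case $J=\mathrm{Des}(\vec c)$) with the facets of $\Delta_{\vec c}$, i.e.\ pointed set compositions of type $\vec c$; the horizontal arrow on top is the composite of $\rho$, the isomorphism $\rho(\Delta)\cong\Sigma_{n+1}$, and the deletion of $n+1$, restricted to these type-selected subcomplexes, and one checks it is well-defined and bijective on faces. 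Next I would compute both ways around the square on a general face $gW_{R\diff J}$: going right-then-down sends it first to a pointed set composition $B_1\Dash\cdots\Dash B_k$ and then to $B_1|\cdots|\underline{B_k}$; going down-then-right applies $\Supp=\LinSpan\circ\rho$ first, landing in $\Pi^{\{r_n\}}_{\mathrm{Des}(\vec c)}$, then applies the bottom isomorphism, which by construction is exactly ``record the blocks, distinguish the last one, discard it if empty.'' Since $\Supp$ on $\rho(\Delta)$ was already shown to be ``forget block order,'' and since the terminal block is precisely the one containing $n+1$ (or equal to $\{n+1\}$) before deletion — this is the remark that faces in $\St(\P_{n-1})$ have $n+1$ in the last block or last block $\{n+1\}$ — the two composites agree. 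Equivariance of every map in sight is inherited from equivariance of $\rho$ and of the block-deletion map, restricted to $W_{R\diff\{r_n\}}=\mathfrak S_n$.

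The main obstacle, such as it is, is not conceptual but notational: one must be careful that the "pointed"/underlining convention on the Ehrenborg--Jung side (keeping a possibly-empty distinguished last block, then erasing empties only at the level of $\Pi^\bullet_{\vec c}$, not of $\Delta_{\vec c}$) lines up exactly with our star construction $\Delta^{\{r_n\}}=\St_\Delta(W_{R\diff\{r_n\}})$ and its image under $\Supp$. The cleanest way to dispatch this is to observe that under the identification $\rho(\Delta)\cong\Sigma_{n+1}$, the vertex $W_{R\diff\{r_n\}}\mapsto \lambda_n$ corresponds to the set composition $1,\ldots,n\Dash n+1$, whose deletion of $n+1$ gives the pointed composition with empty last block; hence a face lies in $\Delta^{\{r_n\}}$ iff it is joinable to this vertex iff, after deleting $n+1$, its last block is the one that "used to contain $n+1$" — which is exactly the distinguished block in the Ehrenborg--Jung picture. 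Given that, the square commutes essentially by definition, and I would present the verification as a short diagram chase on a representative face together with the remark that both horizontal maps and $\Supp$ are $\mathfrak S_n$-equivariant. I expect the entire proof to be three or four sentences, citing Lemma~\ref{Parabolic:Typed} and the discussion preceding Figure~\ref{Fig:Ehrenborg} for the identifications, and Theorem~\ref{Commutative} for the algebraic form of $\Supp$ if needed.
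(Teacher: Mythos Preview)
Your proposal is correct and matches the paper's own treatment: the proposition is presented there as a summary of the correspondence developed in the preceding discussion, with no separate formal proof, and your plan is precisely to unwind those same identifications (the isomorphism $\rho(\Delta)\cong\Sigma_{n+1}$, the star--deletion identification $\Delta^{\{r_n\}}\cong\Delta_n^\bullet$, and the type translation $\vec c\leftrightarrow\mathrm{Des}(\vec c)$) into a short diagram chase. There is nothing to add beyond what you have outlined.
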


The main result of~\cite{Ehrenborg} is the following

\begin{theorem}[Ehrenborg and Jung]\label{Thm:EJ}  
Let $\vec{c}=(c_1,c_2,\ldots, c_k)$ be a 
composition of $n+1$ with last part $c_k$ possibly $0$.  Then we have 
the following isomorphism of top (reduced) homology groups as $\mathfrak S_n$-modules:
\[\tilde{H}_{\rm{top}}(\Delta(\Pi^\bullet_{\vec{c}}\diff \{\hat{1}\}))\homotopic_{\mathfrak S_n}\tilde{H}_{\rm{top}}(\Delta_{\vec{c}}).\]
\end{theorem}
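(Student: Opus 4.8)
The plan is to deduce Theorem~\ref{Thm:EJ} from the machinery of Sections~\ref{Section:Equivariant}--\ref{Section:Homology} by specializing to the standard system $(W,R,\Lambda)$ for $W=\mathfrak S_{n+1}$, with $U=\{r_n\}$ and $T=\mathrm{Des}(\vec{c})$, so that $W_{R\diff U}=\mathfrak S_n$. Proposition~\ref{Prop:Conversion} furnishes the needed dictionary: it gives $\mathfrak S_n$-equivariant isomorphisms $\Delta_{\vec{c}}\cong\Delta^{\{r_n\}}_{\mathrm{Des}(\vec{c})}$ of simplicial complexes and $\Pi^\bullet_{\vec{c}}\cong\Pi^{\{r_n\}}_{\mathrm{Des}(\vec{c})}$ of posets, compatible with the support maps and matching the maximal elements $\hat 1$. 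Hence $\Delta(\Pi^\bullet_{\vec{c}}\diff\{\hat 1\})\cong\Delta(\Pi^{\{r_n\}}_{\mathrm{Des}(\vec{c})}\diff\{\hat 1\})$ as $\mathfrak S_n$-complexes, and it suffices to prove that $\tilde{H}_{\mathrm{top}}(\Delta^{\{r_n\}}_{\mathrm{Des}(\vec{c})})$ and $\tilde{H}_{\mathrm{top}}(\Delta(\Pi^{\{r_n\}}_{\mathrm{Des}(\vec{c})}\diff\{\hat 1\}))$ are isomorphic as $\mathfrak S_n$-modules.

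This isomorphism---in fact in every degree, and as graded modules---is exactly what Theorem~\ref{Theorem:Main} provides, once we know that the standard system for $\mathfrak S_{n+1}$ is locally conical. Since the rest of the deduction is formal, the one substantial point to establish, and the main obstacle, is this local coniality. (Strictly speaking, the proof of Theorem~\ref{Equivalence} only invokes the Quillen fibers over the ambient $U$, here $U=\{r_n\}$, but the full property is no harder to verify.)

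To prove local coniality I would pass to the combinatorial model of Section~\ref{Section:Ehrenborg}: faces of $\Delta$ are set compositions of $[n+1]$, the support map forgets the order of the blocks, and $\Delta^{\{r_n\}}$ consists of the set compositions with $n+1$ in the last block. Fix $X\in\Pi^{\{r_n\}}\diff\{\hat 1\}$; under $\Supp$ it is the flat of a partition $P=\{P_1,\dots,P_m\}$ of $[n+1]$ with $m\geq 2$, say $n+1\in P_m$. Unwinding the order conventions, the Quillen fiber $X\cap\Delta^{\{r_n\}}$ consists of the set compositions each of whose blocks is a union of blocks of $P$ and whose last block contains $n+1$; its maximal faces are precisely the $(m-1)!$ orderings $P_{\tau(1)}\Dash\cdots\Dash P_{\tau(m-1)}\Dash P_m$, and every one of them contains the vertex $v^{\ast}:=([n+1]\diff P_m)\Dash P_m$, the two-block composition that fuses all blocks of $P$ except the one containing $n+1$. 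Thus $v^{\ast}$ is a cone point of the fiber, and it is a legitimate vertex precisely because $P_m\neq[n+1]$, i.e.\ because $X\neq\hat 1$. (For general $U$ one argues the same way, placing the appropriate block at each level of the flag cut out by $U$.) Granting local coniality, Theorem~\ref{Theorem:Main} together with the translation of Proposition~\ref{Prop:Conversion} gives the claimed $\mathfrak S_n$-module isomorphism.
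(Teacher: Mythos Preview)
Your approach is correct and follows the same logical skeleton as the paper's own argument (stated in the Remark following Theorem~\ref{Thm:EJ}): translate via Proposition~\ref{Prop:Conversion}, then apply Theorem~\ref{Equivalence}/\ref{Theorem:Main}, which requires local coniality of the standard system for $\mathfrak S_{n+1}$. The one genuine difference is in how local coniality is established. The paper invokes Theorem~\ref{C:Thm}\eqref{C:locallyconical} (or Theorem~\ref{S:Thm}\eqref{S:locallyconical}), whose proof is a general geometric argument for all finite Coxeter groups using cone-approximating sequences, orthogonal projections onto walls (Lemma~\ref{Lemma:Proj}), and convexity in the Coxeter complex (Lemma~\ref{C:approx}). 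You instead give a direct combinatorial verification specific to type~$A$: identifying the Quillen fiber over the partition $P=\{P_1,\dots,P_m\}$ with $n+1\in P_m$ and exhibiting the two-block composition $([n{+}1]\setminus P_m)\Dash P_m$ as a cone point. Your argument is shorter and entirely self-contained for this particular theorem, while the paper's route yields the general Coxeter result as a byproduct. Your observation that the proof of Theorem~\ref{Equivalence} only uses the fibers over the given $U$ (here $U=\{r_n\}$), so that full local coniality is not strictly needed, is also correct.
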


\begin{remark}
Proposition~\ref{Prop:Conversion} shows that Theorem~\ref{Thm:EJ} is implied by 
combining Theorem~\ref{C:Thm}\eqref{C:locallyconical} or Theorem~\ref{S:Thm}\eqref{S:locallyconical} below with Theorem~\ref{Equivalence} (or Theorem~\ref{Theorem:Main}).
\end{remark}

\section{Coxeter groups}\label{Section:Coxeter}
The main theorem of this section is that we can apply all previous results 
to finite irreducible Coxeter groups.  

\begin{theorem}\label{C:Thm}
Let $(W,R)$ be a simple system for a 
finite irreducible real reflection group $W$, and let 
$\Lambda=\{\lambda_1,\lambda_2,\ldots,\lambda_\ell\}$ 
consist of one nonzero point from each extreme ray of a Weyl chamber 
corresponding to $R$.  Then the following hold:
\begin{enumerate}[(i)]
\item  $(W,R,\Lambda)$ is strongly stratified.\label{C:stronglystratified}
\item  $\rho(\Delta)$ is homeomorphic to the Coxeter complex of $(W,R)$ via 
radial projection.\label{C:complex}
\item  $\Delta:=\Delta(W,R)$ is homotopy Cohen-Macaulay. \label{C:C-M}
\item  $(W,R,\Lambda)$ is locally conical.\label{C:locallyconical}
\end{enumerate}
\end{theorem}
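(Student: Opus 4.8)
The plan is to verify the four assertions in order, leaning heavily on the classical theory of Coxeter complexes for \eqref{C:stronglystratified}--\eqref{C:C-M}, and then doing the real work for \eqref{C:locallyconical}. First I would prove \eqref{C:stronglystratified} and \eqref{C:complex} together: when $R$ is the set of reflections in the walls of a Weyl chamber $C$, the rays $\mathbb R_{\geq 0}\lambda_i$ are exactly the extreme rays of $C$, so $\Hull(\Lambda)$ is a geometric simplex meeting the unit sphere $\mathbb S^{\ell-1}$ transversally, and radial projection carries $\rho(\Delta)=\bigcup_{g\in W} g\,\Hull(\Lambda)$ homeomorphically onto the Coxeter complex. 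This is the standard identification recalled in Example~\ref{Example:Coxeter}; in particular $\rho$ is an equivariant embedding, so $(W,R,\Lambda)$ is well-framed, and since every flat $X\in\L_W$ is $\bigcap_{i\in S}H_i$ for some $S$, the face $\Lambda_{R\diff S}$ spans $X$, giving strong stratification via Proposition~\ref{Pi:L}. Then \eqref{C:C-M} is the classical fact (Solomon, Björner) that a Coxeter complex, being the boundary complex of a shellable/PL-sphere, is homotopy Cohen--Macaulay; since $\rho(\Delta)$ is homeomorphic to it by \eqref{C:complex}, $\Delta$ inherits the property.

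The substantive part is \eqref{C:locallyconical}: I must show that for each nonempty $U\subseteq R$ and each $X\in\Pi^U\diff\{\hat 1\}$ (which by \eqref{C:stronglystratified} and Proposition~\ref{Pi:L} is all of $\L_W\diff\{\hat 1\}$, i.e.\ $X$ is a nonzero intersection of reflecting hyperplanes), the Quillen fiber $X\cap\Delta^U$ has a cone point. Here $\Delta^U=\St_\Delta(W_{R\diff U})$, so $\rho(\Delta^U)$ is the closed star of the face $\rho(W_{R\diff U})=\Lambda_U=\Hull\{\lambda_i : r_i\in U\}$ inside the Coxeter complex, and $X\cap\Delta^U$ consists of those faces in this star whose $\rho$-image lies in $X$. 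The plan is to exhibit the cone point explicitly. The face $\Lambda_U$ has barycenter $b$; working in the fixed chamber $C$, the flats $X$ containing some face of $\St(\Lambda_U)$ are precisely those intersections of walls that meet the relative interior of $\St_C(\Lambda_U)$. For such an $X$, I claim the point $\lambda_j$ corresponding to the \emph{largest} generator $r_j\notin$ (the facet-type of $X$) that still lies in $\overline{X}$ — more precisely the unique vertex of the fundamental simplex $\Hull(\Lambda)$ lying in $X$ of maximal type among those forced to be in $X\cap\St_C(\Lambda_U)$ — serves as a cone point of $X\cap\Delta^U$: every maximal simplex of $X\cap\Delta^U$ is a face of some chamber $g\Hull(\Lambda)$ with $g\in W_{R\diff U}$ whose $\rho$-image lies in $X$, and because $g\Lambda$ is linearly independent and $g$ fixes $\Lambda_U$ pointwise, the chamber is determined up to the parabolic $W_X=\Stab(X)$, forcing all these chambers to share the vertex in question.

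The key mechanism I would use to nail this down is the Galois correspondence (Theorem~\ref{Galois}) together with the following structural observation: $X\cap\Delta^U$ is, via $\rho$ and radial projection, the closed star of $\Lambda_U$ in the \emph{restricted} Coxeter complex of the parabolic subgroup $W_X=\Fix$-preimage $=\Stab(X)$ acting on $X$, where $W_{R\diff U}\cap W_X$ is the relevant stabilizer. Since $U\ne\varnothing$ and $X\ne V$ (so $X\subsetneq V$ forces $W_X$ to be a proper parabolic), the face $\Lambda_U$ is nonempty, and the closed star of a nonempty face in any Coxeter complex is a cone (indeed it deformation retracts to, and has as cone point, any vertex of that face — concretely $\lambda_i$ for the smallest $r_i\in U$ with $\lambda_i\in X$, and such an $i$ exists because $\Lambda_U\cap X\ne\varnothing$ as $X$ supports a face of $\Delta^U$). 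Thus the cone point is simply a vertex of $\Lambda_U$ lying in $X$. The main obstacle is checking that some $\lambda_i$ with $r_i\in U$ actually lies in $X$ for \emph{every} relevant $X$ — equivalently that the support of any face in $\St_\Delta(W_{R\diff U})$ that spans $X$ must use a vertex of type in $U$; this follows because a face $\sigma$ with $\sigma\cup W_{R\diff U}\in\Delta$ has a chamber $g\Hull(\Lambda)$ ($g\in W_{R\diff U}$) containing both $\sigma$ and $\Lambda_U$, and if $\type(\sigma)\cap U=\varnothing$ then $\Supp(\sigma)=\bigcap_{r_i\notin\type(\sigma)}g H_i \supseteq g\LinSpan(\Lambda_U)\ni g\lambda_i$ for $r_i\in U$, i.e.\ $\lambda_i\in X$ after all — and I would argue this carefully, then apply Proposition~\ref{Prop:Contraction} to conclude $\Stab$-contractibility and hence the locally conical property.
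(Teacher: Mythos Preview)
Your treatment of \eqref{C:stronglystratified}--\eqref{C:C-M} is essentially the paper's: these are declared well-known, and your sketch is adequate (with the minor caveat that not every $X\in\L_W$ is literally $\bigcap_{i\in S}H_i$ for $S\subseteq R$; you need a $W$-translate, but the conclusion stands).

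The gap is in \eqref{C:locallyconical}. Your proposed cone point is a vertex $\lambda_i$ of $\Lambda_U$ lying in $X$, and you argue that such a vertex always exists. It does not. Take $W=\mathfrak S_4$, $R=\{r_1,r_2,r_3\}$, $U=\{r_3\}$, and $\sigma=\{\lambda_1\}$, which is joinable with $\Lambda_U=\{\lambda_3\}$ and hence lies in $\Delta^U$. Then $X=\Supp(\sigma)=\LinSpan(\lambda_1)$ is one-dimensional, and $\lambda_3\notin X$ because $\lambda_1,\lambda_3$ are linearly independent. So no vertex of $\Lambda_U$ lies in $X$. The Quillen fiber $X\cap\Delta^U$ here is the single vertex $\{\lambda_1\}$, whose cone point is $\lambda_1$ itself, a vertex of $\sigma$ but not of $\Lambda_U$. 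Your inclusion ``$\Supp(\sigma)=\bigcap_{r_i\notin\type(\sigma)} gH_i\supseteq g\LinSpan(\Lambda_U)$'' is simply false when $\type(\sigma)\cap U=\varnothing$: writing $J=\type(\sigma)$, the left side is $g\bigcap_{i\in R\diff J}H_i$ and the right side is $g\bigcap_{i\in R\diff U}H_i$, and the containment would require $R\diff J\subseteq R\diff U$, i.e.\ $U\subseteq J$, contradicting $J\cap U=\varnothing$. Your identification of $X\cap\Delta^U$ with a closed star in a ``restricted Coxeter complex of $W_X$'' is likewise not justified and does not hold in the form you need.

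The paper's argument is genuinely different: rather than look for the cone point among the vertices of $\Lambda_U$, it produces a cone point that is a vertex of $\sigma$. This is done by an iterative construction (``cone-approximating sequences'') that starts from the cone points $\Lambda_U$ of $\Delta^U$ and repeatedly passes to the intersection with a wall, using two geometric ingredients at each step: convexity of the intersected subcomplexes (so that gallery arguments apply), and the fact that the orthogonal projection of the discarded cone point onto the new wall is nonzero and lands in the cone over the current distinguished chamber (this uses that $\langle\lambda_i,\lambda_j\rangle>0$ for all $i,j$, which in turn uses irreducibility of $W$ and obtuseness of the simple roots). The process terminates only when the cone point found is already a vertex of $\sigma$, and one then checks that this vertex is a cone point of the Quillen fiber $\Supp(\sigma)\cap\Delta^U$. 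You will need an argument of this shape; the shortcut through $\Lambda_U$ does not work.
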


Properties~\eqref{C:stronglystratified}-\eqref{C:C-M} are well-known.  
The aim of this section 
is to establish~\eqref{C:locallyconical}.

We will need the following

\begin{lemma}\label{Lemma:Proj}
Let $W$ be a finite irreducible real reflection group, and let  
$\lambda_{1},\lambda_{2},\ldots, \lambda_{k}$ be nonzero vectors 
on extreme rays of a fixed Weyl chamber.  Then the orthogonal projection 
of $\lambda_{1}$ onto $\LinSpan( \lambda_2,\ldots, \lambda_k)$ is nonzero.
\end{lemma}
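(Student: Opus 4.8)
The plan is to deduce the lemma from the classical positivity of the inverse Gram matrix of the simple roots. Let $C$ be the Weyl chamber, and let $\alpha_1,\ldots,\alpha_\ell$ be the inward normals to its walls, normalized so that $\langle\alpha_i,\alpha_j\rangle=-\cos(\pi/m_{ij})$ for $i\neq j$; thus $C=\{v\in\mathbb R^\ell:\langle v,\alpha_i\rangle\geq 0\text{ for all }i\}$. Since the $\alpha_i$ form a basis of $\mathbb R^\ell$, $C$ is a simplicial cone whose extreme rays are spanned by the dual basis vectors $\omega_1,\ldots,\omega_\ell$ characterized by $\langle\omega_i,\alpha_j\rangle=\delta_{ij}$ (the fundamental weights). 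Hence each $\lambda_i$ appearing in the lemma is a positive scalar multiple $c_i\omega_{m_i}$ of some fundamental weight; I will assume $k\geq 2$, the case $k\leq 1$ being degenerate and unused.

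First I would record the three standard properties of the Gram matrix $G=(\langle\alpha_i,\alpha_j\rangle)_{i,j}$: it is symmetric positive definite (the restricted form is positive definite and the $\alpha_i$ form a basis); its off-diagonal entries are non-positive (simple roots meet at non-acute angles); and the graph with an edge $ij$ whenever $\langle\alpha_i,\alpha_j\rangle\neq 0$ is connected, since $W$ is irreducible. Writing $\omega_i=\sum_j (G^{-1})_{ij}\alpha_j$ and pairing against $\omega_j$ gives immediately $\langle\omega_i,\omega_j\rangle=(G^{-1})_{ij}$, so the Gram matrix of the fundamental weights is exactly $G^{-1}$.

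The key point, which I expect to be the only real content, is that an irreducible symmetric positive definite matrix with non-positive off-diagonal entries has a strictly positive inverse --- equivalently, the inverse Cartan matrix of an irreducible finite Coxeter system has all entries positive. One may cite this, or argue in a few lines: write $G=cI-N$ with $c$ large, so $N$ is entrywise non-negative with connected support graph (the Coxeter diagram) and, by positive-definiteness of $G$, the spectral radius of $N/c$ is less than $1$; then $G^{-1}=c^{-1}\sum_{m\geq 0}(N/c)^m$ is a sum of entrywise non-negative matrices, and connectedness of the support of $N$ forces, for each pair $i,j$, some power $N^m$ to have a positive $(i,j)$-entry, whence $(G^{-1})_{ij}>0$.

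Finally, combining the above, for each $j\in\{2,\ldots,k\}$ we get $\langle\lambda_1,\lambda_j\rangle=c_1 c_j\,(G^{-1})_{m_1 m_j}>0$. In particular $\lambda_1$ is not orthogonal to $\lambda_2\in\LinSpan(\lambda_2,\ldots,\lambda_k)$, so the orthogonal projection of $\lambda_1$ onto $\LinSpan(\lambda_2,\ldots,\lambda_k)$ is nonzero, which is the assertion of the lemma.
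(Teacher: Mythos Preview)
Your proof is correct and follows essentially the same strategy as the paper: both reduce the lemma to the claim that $\langle\lambda_i,\lambda_j\rangle>0$ for any two nonzero vectors on extreme rays of a Weyl chamber, and both derive that positivity from the obtuse basis of simple roots together with the connectedness of the Coxeter diagram. The paper cites Bourbaki for the weak inequality $\langle\lambda_i,\lambda_j\rangle\geq 0$ (via $C\subseteq\{\sum c_i\alpha_i:c_i\geq 0\}$) and Humphreys for the strict one, whereas you make the argument self-contained by identifying $\langle\omega_i,\omega_j\rangle$ with $(G^{-1})_{ij}$ and proving entrywise positivity of $G^{-1}$ through the Neumann series for an irreducible Stieltjes matrix; this is exactly the content behind the cited references, so the two routes are really the same.
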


\begin{proof}
This follows from the claim that, for all $i,j$, one has 
$\langle \lambda_i,\lambda_j\rangle>0$ 
for any set $\{\lambda_1,\ldots, \lambda_\ell\}$ of nonzero vectors 
on the extreme rays of a Weyl chamber $C$.  

To see this claim, let $\alpha_1,\ldots,\alpha_\ell$ be the simple system of roots associated 
with $C$.  In particular, $\alpha_i$ is orthogonal to 
\[H_i=\Span(\lambda_1,\ldots,\hat{\lambda}_i,\ldots,\lambda_\ell),\]
and its direction is chosen so that $\langle \lambda_i,\alpha_i\rangle\geq 0$.  
Recall that the $\alpha_i$ form an obtuse basis for $V$, i.e., a basis with
\[\langle \alpha_i,\alpha_j\rangle\leq 0\quad\text{for all $i,j$}.\]
It follows from\cite[Ch.V, \S 3, no. 5, Lemma 6]{Bourbaki} that $C\subseteq \left\{\Sigma_{i}c_i\alpha_i\ :\ c_i\geq 0\right\}$.
This implies the weak inequality, i.e., $\langle \lambda_i,\lambda_j\rangle\geq 0$ for all $i,j$. 
 
One can now obtain the desired strict inequality by using the connectivity of the 
Coxeter diagram for $W$ and the fact that the $\alpha_i$ are obtuse; 
see~\cite[p. 72, no. 8]{Humphreys} for an outline.
\end{proof}

We will also need some basic facts regarding convexity of the  
Coxeter complex; see~\cite{Brown}, particularly Section 3.6, 
for details and a more general treatment.  
Let $W\leq \GL(\mathbb R^\ell)$ be a finite 
real reflection group, and let $\Sigma$ denote its Coxeter complex.  
Recall that $\Sigma$ is a \emph{chamber complex}, meaning that all 
maximal simplices (called \emph{chambers}) are of the same dimension, and any two 
chambers can be connected by a gallery.  Here, a \emph{gallery} connecting two 
chambers $C$, $D$ is a sequence of chambers
\[C=C_0,C_1,\ldots,C_k=D\]
with the additional 
property that consecutive chambers are adjacent, meaning that they 
share a codimension-1 face.

A \emph{root}\footnote{ 
Root vectors of $W$ in $\mathbb R^\ell$ (vectors perpendicular to 
reflecting hyperplanes) are in canonical 
1-1 correspondence with roots of $\Sigma$.  The notion of root extends 
to arbitrary thin chamber complexes by introducing the notion of a folding.  
The terminology is due to Tits, who characterized 
abstract Coxeter complexes as precisely those 
thin chamber complexes with ``enough'' foldings; see~\cite[Sec. 3.4]{Brown}.
}
of $\Sigma$ is the intersection of $\Sigma$ with a 
closed half-space determined by a reflecting hyperplane, 
and a subcomplex of $\Sigma$ is called \emph{convex} if 
it is an intersection of roots.  Each convex subcomplex $\Sigma'$ 
is itself a chamber complex in which any two maximal simplices can be 
connected by a $\Sigma'$-gallery.  Moreover, a chamber subcomplex $\Sigma'$ is convex 
if and only if any shortest $\Sigma$-gallery connecting two chambers of $\Sigma'$ 
is contained in $\Sigma'$.

The main tool for proving 
Theorem~\ref{C:Thm}\eqref{C:locallyconical} is an iterative method for detecting cone 
points.  The following discussion and lemma make this precise.

Choose a nontrivial subset $U\subseteq R$ and consider a nontrivial simplex $\sigma\in\Delta^U$.  
Choose $B$ to be a chamber 
of $\Delta^U$ containing $\sigma*\Lambda_U$.  
From a sequence $b_0,b_1,\ldots, b_{k}$ of distinct vertices of $B$, 
we construct a descending sequence of convex subcomplexes
\[\Delta^U=\Delta_0\supset \Delta_1\supset\cdots\supset \Delta_k \supset \Delta_{k+1},\]
where we set $H_i^B=\Supp(B\diff\{b_i\})$ and define
\[\Delta_i=\left(\bigcap_{j=0}^{i-1} H_j^B\right)\cap \Delta^U.\]
We call the sequence $b_0,b_1,\ldots,b_k$ \emph{cone-approximating} for the triple $(\Delta^U,\sigma,B)$ 
if the following two conditions hold:

\begin{enumerate}
\item  $b_i$ is a cone point of $\Delta_i$ for $0\leq i\leq k$.
\item  $\sigma\in \Delta_{i}$ for $0\leq i\leq k$.
\end{enumerate}

Note that $\Delta_0=\Delta^U$ implies the existence of cone-approximating sequences, 
since $U$ is nontrivial.  
The main result is that any cone-approximating 
sequence can be extended to contain a vertex of $\sigma$:

\begin{lemma}\label{C:approx}
In the above setting, a maximal cone-approximating sequence $b_0,b_1,\ldots,b_m$ for 
$(\Delta^U,\sigma,B)$ has $b_m\in\sigma$. 
\end{lemma}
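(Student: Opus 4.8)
The plan is to argue by contradiction, supposing that $b_0, b_1, \ldots, b_m$ is a maximal cone-approximating sequence but that $b_m \notin \sigma$. I would first record the geometric picture: each vertex $b_i$ of the chamber $B$ corresponds to an extreme ray of $gC$ for a fixed $g \in W$ (since $B \supseteq \sigma * \Lambda_U$ lives in a single $W$-translate $g\Lambda$), and the subcomplexes $\Delta_i = \bigl(\bigcap_{j<i} H_j^B\bigr) \cap \Delta^U$ are convex chamber subcomplexes of the Coxeter complex by the convexity facts recalled before the lemma. The key point is that $\Delta_{m+1} = H_m^B \cap \Delta_m$ still contains $\sigma$ (because $b_m \notin \sigma$ forces $\sigma \subseteq \Supp(B \diff \{b_m\}) = H_m^B$, using that $\rho$ sends faces to subsets of the linearly independent set $g\Lambda$, so the support of $\sigma$ is exactly the span of $\sigma$'s vertices, and omitting a non-vertex of $\sigma$ keeps all of $\sigma$'s vertices). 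Thus condition (2) for extending the sequence by one more step is automatic; the whole difficulty is condition (1).

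So the heart of the argument is: \emph{$\Delta_{m+1}$ has a cone point.} I would establish this using Lemma~\ref{Lemma:Proj} together with the convexity of $\Delta_{m+1}$. Concretely, $\Delta_{m+1}$ is a convex subcomplex containing the face $\Lambda_U$ (or rather $g\Lambda_U$ after translating back), and I want to produce a vertex joinable to every maximal simplex. The natural candidate is the remaining vertex of $B$ of appropriate type — i.e., among $b_{m+1}, \ldots$ pick the one lying in the intersection of all the hyperplanes $H_0^B, \ldots, H_m^B$ defining $\Delta_{m+1}$ but not yet used — and show it is a cone point. Lemma~\ref{Lemma:Proj} is what guarantees, in the real reflection group case, that the relevant projection is nonzero, so that this vertex genuinely lies in $\Delta_{m+1}$ (its support is not killed by the imposed linear conditions) and, because $\Delta_{m+1}$ is convex and all chambers are connected by galleries inside it, that it is joinable to each chamber. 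More precisely, a vertex $v$ of a convex chamber subcomplex $\Sigma'$ is a cone point iff $v$ lies in every chamber of $\Sigma'$; since $\Delta_{m+1}$ is an intersection of roots and $v$ is forced to lie in the "deep" side of each of the defining hyperplanes $H_i^B$ (because $b_i$ was already a cone point of $\Delta_i \supseteq \Delta_{m+1}$ and the $\langle\lambda_i,\lambda_j\rangle > 0$ positivity from Lemma~\ref{Lemma:Proj}'s proof prevents $v$ from falling onto a wall), one concludes $v$ is in every chamber. Then $b_0, \ldots, b_m, v$ would be cone-approximating of length $m+1$, contradicting maximality.

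Assembling the steps in order: (1) reduce to a single translate $g\Lambda$ and translate by $g^{-1}$ so everything happens in the fundamental apparatus $(\Lambda, C)$; (2) observe $b_m \notin \sigma \Rightarrow \sigma \subseteq H_m^B$, hence $\sigma \in \Delta_{m+1}$, so condition (2) persists; (3) show $\Delta_{m+1}$ is a nonempty convex chamber subcomplex and identify the candidate cone point $v$ as the unused extreme ray; (4) use Lemma~\ref{Lemma:Proj} (equivalently $\langle\lambda_i,\lambda_j\rangle>0$) to see $v \in \Delta_{m+1}$ and, via convexity and galleries, that $v$ is a cone point of $\Delta_{m+1}$; (5) conclude $b_0,\ldots,b_m,v$ extends the sequence, contradiction, so in fact $b_m \in \sigma$. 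I expect step (4) to be the main obstacle: turning the linear-algebraic positivity statement of Lemma~\ref{Lemma:Proj} into the combinatorial-geometric assertion that the natural candidate vertex lies on the correct side of every folding hyperplane defining the convex subcomplex $\Delta_{m+1}$, and hence in every chamber. The other steps are bookkeeping about supports of faces in a balanced complex and the standard convexity dictionary for Coxeter complexes from \cite{Brown}.
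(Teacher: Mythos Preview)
Your overall architecture matches the paper's: argue by contradiction, show $\sigma\in\Delta_{m+1}$ (your step (2) is fine), produce a cone point $b_{m+1}$ of $\Delta_{m+1}$, and extend the sequence. The gap is in step (4), where your mechanism for locating and certifying the cone point does not work as stated.

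First, your candidate vertex is not well-defined. Every remaining vertex of $B$ lies in $\bigcap_{j\leq m} H_j^B$, since $H_j^B=\Supp(B\diff\{b_j\})$ contains all vertices of $B$ except $b_j$; so ``the one lying in the intersection'' does not single anything out. Second, your cone-point argument conflates two different things: the $H_i^B$ are linear subspaces (supports of faces), not the half-space roots whose intersection cuts out a convex subcomplex, so there is no ``deep side'' of $H_i^B$ to speak of. The positivity $\langle\lambda_i,\lambda_j\rangle>0$ does not by itself tell you on which side of the \emph{reflecting} hyperplanes defining $\Delta_{m+1}$ an arbitrary leftover vertex of $B$ sits, and in general not every vertex of $B_{m+1}=B\diff\{b_0,\ldots,b_m\}$ is a cone point of $\Delta_{m+1}$.

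The paper's device for step (4) is the one piece you are missing. One takes the orthogonal projection of $b_m$ onto $\Supp(\Delta_{m+1})$ inside $\Supp(\Delta_m)$. Lemma~\ref{Lemma:Proj} says this projection is nonzero; the obtuse-angle condition on chamber walls forces the projection to meet $\mathbb R_{>0}B_{m+1}$, hence to hit a minimal face $F$ of $B_{m+1}$. Now any reflection appearing in a $\Delta_{m+1}$-gallery out of $B_{m+1}$ fixes $b_m$ (because $b_m$ is a cone point of $\Delta_m$) and stabilizes $\Supp(\Delta_{m+1})$, hence fixes its orthogonal complement in $\Supp(\Delta_m)$ pointwise, hence fixes the projection line, hence fixes $F$ pointwise. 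Choosing $b_{m+1}$ to be any vertex of $F$ therefore gives a vertex lying in every chamber of $\Delta_{m+1}$, i.e.\ a cone point. This projection-of-$b_m$ argument is the missing idea; once you insert it, the rest of your outline goes through exactly as in the paper.
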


\begin{proof}
Supposing $b_m\not\in \sigma$, we have that $\sigma\in\lk_{\Delta_m}(b_m)$.  
From this it follows that 
\[\sigma\in  H_m^B\cap \Delta_m=\Delta_{m+1}.\]
First note that $\Delta_{m+1}$ is a convex subcomplex.  Set
\[B_{m+1}=B\diff\{b_0,\ldots, b_m\},\]
the \emph{distinguished chamber} of $\Delta_{m+1}$ containing 
$\sigma$, and let $\tilde{B}_{m+1}$ be another 
chamber of $\Delta_{m+1}$.

Since $\Delta_{m+1}$ is convex, we have that
\[\Delta_{m+1}*\{b_0,\ldots, b_m\}\]
is a convex chamber subcomplex of $\Delta^U$.  Thus, there is a gallery in 
$\Delta_{m+1}*\{b_0,\ldots, b_m\}$ that connects chambers $B_{m+1}*\{b_0,\ldots,b_m\}$ and 
$\tilde{B}_{m+1}*\{b_0,\ldots, b_m\}$.  
Therefore, there is a 
sequence of reflections $\overline{\mathbf r}$ that induces a 
gallery from $B_{m+1}$ to $\tilde{B}_{m+1}$ in $\Delta_{m+1}$.  

Since $\overline{\mathbf r}$ (sequentially) 
stabilizes $\Delta_{m+1}$, meaning that $r_i\Delta_{m+1}=\Delta_{m+1}$ for every reflection 
$r_i$ in $\overline{\mathbf{r}}$, the sequence $\overline{\mathbf r}$ also 
stabilizes both $\Supp(\Delta_{m+1})$ and 
its orthogonal complement in $\Supp(\Delta_m)$.  
Because $b_m$ is fixed by $\overline{\mathbf r}$ and lies strictly on one side of 
$\Supp(\Delta_{m+1})$ in $\Supp(\Delta_m)$, 
it follows that $\overline{\mathbf r}$ 
fixes the orthogonal complement pointwise.  This implies that the orthogonal projection 
${\mathrm{Proj}}^{\Delta_m}_{\Delta_{m+1}}(b_m)$ 
of the vector $b_m$ onto $\Supp(\Delta_{m+1})$ in $\Supp(\Delta_m)$ is fixed by $\overline{\mathbf r}$.  

From Lemma~\ref{Lemma:Proj} we have 
\[\dim\ {\mathrm{Proj}}^{\Delta_{m}}_{\Delta_{m+1}}(b_m)=1.\]
We claim that this projection has nontrivial intersection with the 
cone over $B_{m+1}$.  That is,
\begin{equation}\label{Proj}
\dim\ {\mathrm{Proj}}^{\Delta_{m}}_{\Delta_{m+1}}(b_m)\cap \mathbb R_{>0}B_{m+1}=1.
\end{equation}
To see this, note first that $B_{m+1}*\{b_m\}$ forms a chamber for a Coxeter complex.  
The claim now follows from the fact that 
pairs of walls in a chamber do not intersect obtusely; indeed, writing 
$s_i,s_j$ for the reflections in two distinct walls of a chamber, we have 
$s_i\neq s_j$ and the dihedral 
angle formed by the walls is $\pi/m_{ij}$, where $m_{ij}$ is the order of $s_is_j$.

By~\eqref{Proj}, the line $L=\Span\ {\mathrm{Proj}}^{\Delta_{m}}_{\Delta_{m+1}}(b_m)$ 
intersects a face $F$ of $B_{m+1}$ that is 
minimal in the sense that no proper face of $F$ meets $L$.  Therefore, if $L$ is fixed by a 
reflection, the face $F$ is fixed pointwise by the reflection.  
Choose $b_{m+1}$ to be some vertex of $F$.  We conclude that any gallery in 
$\Delta_{m+1}$ that contains $B_{m+1}$ has $b_{m+1}$ as 
a cone point.  By connectivity, this implies that $\Delta_{m+1}$ has $b_{m+1}$ as a 
cone point.  As we also have $\sigma\in \Delta_{m+1}$ and $b_{m+1}\neq b_{m},\ldots, b_0$, 
we can append $b_{m+1}$ to obtain a longer approximating sequence. 
\end{proof}

\begin{proof}[Proof of Theorem~\ref{C:Thm}\eqref{C:locallyconical}]
Let $U\subseteq R$ be nonempty and $\sigma\in \Delta^U\diff\{\varnothing\}$.  
Choose $B$ to be a chamber of $\Delta^U$ containing $\sigma*\Lambda_U$.  
Consider the Quillen fiber $\Supp(\sigma)\cap \Delta^U$ for the map 
$\Supp:\Face(\Delta^U)\to\Pi^U\diff\{\hat{1}\}$, 
as in~\eqref{UnrestrictedFiber} of Theorem~\ref{Equivalence}, and let $b_0,b_1,\ldots, b_m$ 
be a maximal cone-approximating sequence for $(\Delta^U,\sigma,B)$.
By Lemma~\ref{C:approx}, $\Delta_m$ has cone point $b_m$ that is also a vertex
of $\sigma$.   We want to show that $b_m$ is also a cone point for the Quillen fiber.

Since $\sigma\in \Delta_m$ and $\Delta_m=X\cap \Delta^U$ 
for some particular $X\in \L_W$, it follows that $\sigma\subset X$, and hence
\[\Supp(\sigma)\cap \Delta^U\subseteq \Delta_m.\]
Let $\tau\in \Supp(\sigma)\cap \Delta^U$.  We want to show that the join 
$\tau*\{b_m\}$ is also in the fiber.  Since $\tau*\{b_m\}\in \Delta_m\subseteq \Delta^U$, 
we need only show that $\Supp(\tau*\{b_m\})\subseteq\Supp(\sigma)$.  
But this is clear, since $\tau\subset\Supp(\sigma)$ and $b_m\in \sigma$.
\end{proof}

\section{Shephard groups}\label{Background}

\emph{Shephard groups} form an important class of complex reflection groups.  
They are the symmetry groups of regular complex polytopes, as defined by 
Shephard~\cite{Shephard}.  Here we will follow Coxeter's treatment~\cite{Coxeter}.

Let $\P$ be a finite arrangement of complex affine subspaces of $V$, with partial 
order given by inclusion.  We call its elements \emph{faces}, denoting an 
$i$-dimensional face by $F_i$.  A $0$-dimensional face is called a \emph{vertex}.  
Allowing \emph{trivial faces} $F_n=V$ and $F_{-1}=\varnothing$, all other 
faces are called \emph{proper faces}.  A totally ordered set of proper faces is 
called a \emph{flag}.  
The simplicial 
complex of all flags is called the \emph{flag complex} and is denoted $K(\P)$.  
This is the order complex of $\P$ with its improper faces $\varnothing$ and $V$ omitted.

Such an arrangement $\P$ is a \emph{polytope} if the following hold:

\begin{enumerate}[(i)]
\item  $\varnothing,V\in\P$.\label{poly:1}
\item  If $F_i\subset F_j$ and $|i-j|\geq 3$, then the open interval
\[(F_i,F_j)=\{F\ :\ F_i\subset F\subset F_j\}\] 
is connected, i.e., its Hasse diagram is a connected graph.\label{poly:2}
\item\label{poly:3}  If $F_i\subset F_j$ and $|i-j|\geq 2$, then the open interval $(F_i,F_j)$ 
contains at least two distinct $k$-dimensional faces $F_k,F_k'$ for each $k$ with $i<k<j$.
\end{enumerate}

For $\P$ a polytope, 
note that properties~\eqref{poly:1} and~\eqref{poly:3} enable one to extend any 
partial flag $F_{i_1}\subset F_{i_2}\subset\cdots\subset F_{i_k}$ in $K(\P)$ to a maximal 
flag (under inclusion) of the form
\[F_0\subset F_1\subset\cdots\subset F_{\ell-1}.\]
We call maximal flags in $K(\P)$ \emph{chambers}.  
If the group $W\subset \GL(V)$ of automorphisms of $\P$ acts transitively 
on the chambers of $\P$, then we say that $\P$ is \emph{regular} and that 
$W$ is a \emph{Shephard group}.

The complexifications of the two (affine) arrangements shown in Figure~\ref{starry} are 
examples of regular (complex) polygons.  Both polygons have symmetry group $I_2(5)$, the dihedral group of order 10.
\begin{figure}[hbt]
\begin{center}
\includegraphics{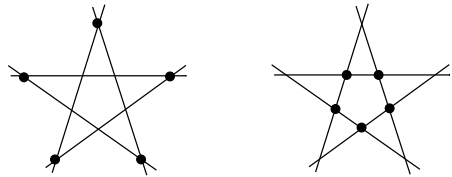}\caption{Two regular polygons with symmetry group $I_2(5)$.}
\label{starry}
\end{center}
\end{figure}

If $\P$ contains a pair of distinct vertices that are at the minimum distance apart 
(among all pairs of distinct vertices) with 
no edge of $\P$ connecting them, then $\P$ is~\emph{starry}; see~\cite[p. 87]{Shephard}.
For example, the first polytope 
of Figure~\ref{starry} is \emph{starry}, whereas the second is \emph{nonstarry}.
From the work of Coxeter, each Shephard group $W$ 
is the symmetry group of two (possibly isomorphic) nonstarry regular complex polytopes; 
see Tables IV and V in~\cite{Coxeter}.
Henceforth, we assume that all polytopes are nonstarry.  Though currently unmotivated, the 
importance of this assumption will be made clear by Theorem~\ref{S:Thm}.

Given a regular complex polytope $\P$ and a choice of maximal flag 
\[\calB=B_0\subset B_1\subset\cdots\subset B_{\ell-1},\]
called the \emph{base chamber}, for each $i$ the group
\[\mathrm{Stab}_W(B_0\subset\cdots\subset \hat{B}_i\subset\cdots\subset B_{\ell-1})\]
is generated by some reflection $r_i$.  Choosing such an $r_i$ for each $i$ 
yields an associated set $R=\{r_0,r_1,\ldots, r_{\ell-1}\}$ that generates $W$.  
We call $R$ a set of \emph{distinguished generators}.  

In the case that 
$\P$ has a real form, each $r_i$ is uniquely determined, and they give the 
usual Coxeter presentation for $W$.  In general, 
Coxeter shows that one can always choose the reflections $r_i$ so that 
for some integers $p_0,p_1,\ldots,p_{\ell-1}, q_0,q_1,\ldots,q_{\ell-2}$ 
the group has the following Coxeter-like presentation with defining relations
\begin{align*}
r_i^{p_i}&=1\\
r_ir_j&=r_jr_i\quad\text{if}\ |i-j|\geq 2\\
\underbrace{r_ir_{i+1}r_i\cdots}_{q_i}&=\underbrace{r_{i+1}r_ir_{i+1}\cdots}_{q_i}.
\end{align*}
These relations are encoded by \emph{symbol} 
\[p_0[q_0]p_1[q_1]p_2\cdots p_{\ell-2}[q_{\ell-2}]p_{\ell-1},\]
which is uniquely determined by $W$ up to reversal.  The corresponding 
nonstarry polytopes are denoted 
\[p_0\{q_0\}p_1\{q_1\}p_2\cdots p_{\ell-2}\{q_{\ell-2}\}p_{\ell-1}
\quad\text{and}\quad p_{\ell-1}\{q_{\ell-2}\}p_{\ell-2}\cdots p_{2}\{q_{1}\}p_{1}\{q_0\}p_0,\] 
the second called the \emph{dual} of the first.  If we denote one of the two polytopes by $\P$, 
then the other is denoted by $\P^*$.  The two polytopes have dual face posets and 
isomorphic flag complexes.

The complete classification 
of Shephard groups is quite short:
\begin{itemize}
\item  The symmetry groups of real regular polytopes, i.e., the Coxeter groups 
with connected unbranched diagrams: types $A_n,B_n=C_n,F_4,H_3,H_4,I_2(n)$.
\item  $p_0[q]p_1$ with $p_0,p_1\geq 2$ not both $2$, and $q\geq 3$ satisfying
\[\frac{1}{p_0}+\frac{1}{p_1}+\frac{2}{q}>1,\]
where $p_0=p_1$ if $q$ is odd.  Using Shephard and Todd's numbering, these groups are
$G_4, G_5,G_6,G_8,G_9,G_{10},G_{14},G_{16},G_{17},G_{18},G_{20},G_{21}$.
\item  $G(r,1,n)=Z_r\wr \mathfrak S_n$ with $r>2$.  The group can be represented 
as $n\times n$ permutation matrices with entries the $r$th roots of unity.
\item  $2[4]3[3]3=G_{26}$
\item  $3[3]3[3]3=G_{25}$
\item  $3[3]3[3]3[3]3=G_{32}$.
\end{itemize}

The following list summarizes notation and assumptions that will remain fixed 
when dealing with Shephard groups.
\begin{itemize}
\item  $W$ is a Shephard group.  
\item  $\P$ is a non-starry regular complex polytope with symmetry group $W$.
\item  $K(\P)$ is the flag complex of $\P$, consisting of all flags of (proper) faces.
\item  $\mathcal B=B_0\subset B_1\subset\cdots\subset B_{\ell-1}$ is a chosen base flag in $K(\P)$.
\item  $R$ is a set of distinguished generators for $W$ corresponding to $\mathcal B$.  
One can choose $R$ to satisfy the presentation found in the classification above, but 
doing so is unnecessary.
\item  Let $U\subseteq R$ with $U=\{r_{i_1},\ldots,r_{i_k}\}$ and $i_1<\cdots< i_k$.  Then 
\[\mathcal B_U\Def B_{i_1}\subset\cdots\subset B_{i_k}.\]
\item  $\L_W$ is the lattice of hyperplane intersections for $W$ under reverse inclusion.
\end{itemize}

\section{Shephard systems}\label{Section:Shephard:2}
The aim of this section is to present an analogue of Theorem~\ref{C:Thm} for Shephard groups.  

If $W$ is the symmetry group of a \emph{real} regular polytope $\P$, then 
the Coxeter complex $\Sigma$ is obtained by intersecting the reflecting hyperplanes 
with the real sphere $\mathbb S^{\ell-1}$.  A radial projection sends $\Sigma$ 
homeomorphically onto the barycentric subdivision of $\P$.
Moreover, it is a geometric realization of the 
(a priori) poset of \emph{standard cosets}
\[\Delta_W=\{gW_{J}\}_{J\subseteq R},\]
ordered by reverse inclusion.  This section presents the analogous picture for Shephard groups, 
as established by Orlik~\cite{OrlikMilnor} and Orlik, Reiner, Shepler~\cite{OrlikReinerShepler}.

The \emph{vertices} of a face $F$ of $\P$ are the vertices of $\P$ lying on $F$, and 
the \emph{centroid} $O_F$ of $F$ is the average of its vertices.   
Centroids play an important role in what follows.

\begin{definition}
A \emph{Shephard system} is a triple $(W,R,\Lambda)$ with the following properties:
\begin{enumerate}[(i)]
\item  $W$ is the symmetry group of a nonstarry regular complex polytope $\mathscr P$.
\item  $R$ is a set of distinguished generators corresponding to a chosen base flag
\[\mathcal B=B_0\subset B_1\subset\cdots\subset B_{\ell-1}.\]
\item  $\Lambda=\{\lambda_0,\lambda_1,\ldots,\lambda_{\ell-1}\}$ is defined by setting 
$\lambda_i=O_{B_i}$.
\end{enumerate}
\end{definition}

We start by observing that such triples are framed systems.

\begin{proposition}  
A Shephard system is a framed system.
\end{proposition}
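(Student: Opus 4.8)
The plan is to verify directly that a Shephard system $(W,R,\Lambda)$ satisfies the definition of a framed system, namely that each centroid $\lambda_i = O_{B_i}$ is a nonzero vector lying in $H_1 \cap \cdots \cap \hat{H_i} \cap \cdots \cap H_{\ell}$ (indexed from $0$ to $\ell-1$), where $H_j$ is the reflecting hyperplane of $r_j$. Nonvanishing is the easier half: since $W$ acts irreducibly on $V$, it fixes no nonzero vector, so $O_{B_{\ell-1}} \neq 0$; more generally, the centroid $O_{B_i}$ is fixed by the subgroup $W_{R \setminus \{r_0,\ldots,r_{i}\}^c}$—precisely the parabolic stabilizing the partial flag up through $B_i$—and the key point is that $B_i$, being a proper face of the polytope, is not the whole space $V$, so its centroid is a nontrivial averaging of vertices that cannot be the origin (the origin is the unique $W$-fixed point, and no proper face is $W$-stable).

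The heart of the argument is the incidence statement: $\lambda_i = O_{B_i} \in H_j$ for every $j \neq i$. The mechanism is that for $j \neq i$, the distinguished generator $r_j$ stabilizes the flag $B_0 \subset \cdots \subset \hat{B_j} \subset \cdots \subset B_{\ell-1}$, and in particular $r_j$ stabilizes the face $B_i$ (since $i \neq j$, $B_i$ appears in that truncated flag). A reflection that stabilizes the affine subspace $B_i$ as a set must permute its vertices, hence fix the centroid $O_{B_i}$. Therefore $\lambda_i = O_{B_i}$ is fixed by $r_j$, which means $\lambda_i$ lies in the fixed space of $r_j$, i.e., on the reflecting hyperplane $H_j$. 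Running this over all $j \neq i$ gives $\lambda_i \in \bigcap_{j \neq i} H_j$, which is exactly the frame condition.

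I would organize the write-up as: (1) recall that $r_j \in R$ is (by definition of distinguished generators) a reflection generating $\mathrm{Stab}_W(B_0 \subset \cdots \subset \hat{B_j} \subset \cdots \subset B_{\ell-1})$, so $r_j B_i = B_i$ whenever $i \neq j$; (2) observe that an automorphism of $\P$ fixing a face $B_i$ setwise fixes its centroid $O_{B_i}$, since automorphisms send vertices to vertices and $O_{B_i}$ is the average of the vertices lying on $B_i$; (3) conclude $\lambda_i \in H_j$ for all $j \neq i$; (4) dispatch nonvanishing via irreducibility, as above.

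The main obstacle, and the one deserving the most care, is step (2) combined with confirming that the centroid is genuinely nonzero—one must be sure that "average of vertices lying on $B_i$" is well-defined (the face $B_i$ of a regular polytope has at least one vertex, which follows from property~\eqref{poly:3} of a polytope allowing extension of partial flags downward to a vertex $B_0$), and that a proper face cannot have its vertex-barycenter at the $W$-fixed origin. The latter is most cleanly seen by noting $\mathrm{Stab}_W(B_i)$ acts irreducibly on $\LinSpan(B_i - O_{B_i})$ is not needed; rather, it suffices that if $O_{B_i} = 0$ then $B_i$ (as an affine subspace through $0$, hence linear) would be a $\mathrm{Stab}_W(B_i)$-invariant proper subspace whose setwise stabilizer is a maximal parabolic—this is a standard fact, but I would state it carefully or instead simply invoke that the vertices of $\P$ are the $W$-orbit of $B_0 \neq 0$ and lie on a sphere about the origin, so no proper subset of them averages to $0$ unless forced by a symmetry that the parabolic $\mathrm{Stab}_W(B_i)$ does not possess. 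Everything else is routine.
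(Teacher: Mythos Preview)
Your core argument is exactly the paper's: the paper's proof is two sentences, observing that each $r_k$ stabilizes the flag $B_0\subset\cdots\subset\hat{B}_k\subset\cdots\subset B_{\ell-1}$ and therefore fixes the centroid $O_{B_i}$ for every $i\neq k$, so $O_{B_i}\in\bigcap_{k\neq i}H_k$. The paper does not spell out the intermediate step (a setwise stabilizer of a face permutes its vertices and hence fixes their average), nor does it verify that $O_{B_i}\neq 0$; you are more thorough on both points.

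That said, your nonvanishing arguments wander and none of them quite lands. The observation that no proper face is $W$-stable does not by itself prevent $O_{B_i}=0$, and the sphere argument fails outright (antipodal vertices of a cross-polytope average to the origin). The clean route is Coxeter's orthoscheme property, which the paper invokes shortly afterward: for any maximal flag the differences $O_{F_{\ell-1}}-O_{F_\ell},\ O_{F_{\ell-2}}-O_{F_{\ell-1}},\ \ldots,\ O_{F_0}-O_{F_1}$ (with $O_{F_\ell}=0$) form an orthogonal basis of $V$, so each $O_{F_i}$ is a nonzero partial sum. Citing this settles the nonvanishing in one line; the rest of your write-up is fine as is.
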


\begin{proof}
Recall from Section~\ref{Background} that for $0\leq k\leq \ell-1$, the reflection 
$r_k\in R$  
stabilizes 
\[B_0\subset\cdots\subset \hat{B}_k\subset\cdots \subset B_{\ell-1}.\]
In particular, centroid $O_i$ of $B_i$ is fixed by all $r_k$ with $k\neq i$.  
In other words,
\[O_i\in H_0\cap\cdots\cap \hat{H}_i\cap\cdots\cap H_{\ell-1}.\]
\end{proof}

Let $B(\P)$ denote the (topological) subspace of $V$ that consists 
of all real convex hulls of centroids of flags under inclusion, i.e., 
\[B(\P)=\bigcup_{({F_{i_1}\subset\cdots\subset F_{i_k}})\in K(\P)} 
\Hull(O_{F_{i_1}},\ldots, O_{F_{i_k}}).\]
We can now state the analogue of Theorem~\ref{C:Thm} for Shephard groups:

\begin{theorem}\label{S:Thm}
Let $(W,R,\Lambda)$ be a Shephard system.  Then the following hold:
\begin{enumerate}[(i)]
\item  $(W,R,\Lambda)$ is strongly stratified.\label{S:stronglystratified}
\item  $\rho(\Delta)=B(\P)$.\label{S:B}
\item  $\Delta:=\Delta(W,R)$ is homotopy Cohen-Macaulay. \label{S:C-M}
\item  $(W,R,\Lambda)$ is locally conical.\label{S:locallyconical}
\end{enumerate}
\end{theorem}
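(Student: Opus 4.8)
\textbf{Proof proposal for Theorem~\ref{S:Thm}.}
The plan is to prove the four parts essentially in order, with parts \eqref{S:stronglystratified}--\eqref{S:C-M} following quickly from the literature (Orlik~\cite{OrlikMilnor} and Orlik--Reiner--Shepler~\cite{OrlikReinerShepler}) and part \eqref{S:locallyconical} being the real content. For \eqref{S:B} I would show directly that $\rho$ sends a face $gW_{R\diff J}$ to $g\,\Hull(\{O_{B_i} : r_i\in J\})$, and that as $g$ and $J$ vary these simplices sweep out exactly the union $B(\P)$ defined by flags of faces of $\P$; the key identification is that the chambers of $K(\P)$ are in $W$-equivariant bijection with the cosets $gW_\varnothing$, so barycentric-type subdivision of $K(\P)$ matches $\Delta(W,R)$. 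Injectivity of $\rho$ (hence that the Shephard system is well-framed) should come from the nonstarry hypothesis: the centroids $O_{B_0},\ldots,O_{B_{\ell-1}}$ are affinely independent and the convex hulls $g\Lambda_J$ meet only along shared subfaces, because two distinct chambers of the flag complex of a nonstarry polytope cannot have overlapping barycentric simplices. Once $\rho$ is an embedding, \eqref{S:stronglystratified} follows because each $X\in\L_W$ is $V^{gW_Jg^{-1}}$ for a suitable parabolic, and the corresponding face $gW_{R\diff J}$ has support exactly $X$; \eqref{S:C-M} follows because $B(\P)$ is a barycentric subdivision of a shellable (indeed Cohen--Macaulay) complex, the face poset of the polytope, which is known to be homotopy Cohen--Macaulay for regular complex polytopes.

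For \eqref{S:locallyconical}, the strategy is to mimic the Coxeter-group argument of Theorem~\ref{C:Thm}\eqref{C:locallyconical}, replacing the Coxeter complex and its galleries by the flag complex $K(\P)$ and the analogous chamber-connectivity structure. So I would fix nonempty $U\subseteq R$ and a nontrivial simplex $\sigma\in\Delta^U$, choose a chamber $B$ of $\Delta^U$ containing $\sigma*\Lambda_U$, and build the same descending chain of ``linear sections'' $\Delta_i = \big(\bigcap_{j<i} H_j^B\big)\cap\Delta^U$ from a cone-approximating sequence $b_0,b_1,\ldots$ of vertices of $B$. The goal is an analogue of Lemma~\ref{C:approx}: a maximal cone-approximating sequence must terminate with a vertex of $\sigma$, and then the argument in the proof of Theorem~\ref{C:Thm}\eqref{C:locallyconical} applies verbatim (linear independence of $g\Lambda$ forces the cone point into $\sigma$, and then into the Quillen fiber). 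This requires a Shephard-group substitute for two ingredients used in the Coxeter case: (a) convex subcomplexes of $K(\P)$ are themselves chamber complexes in which any two chambers are joined by an interior gallery, and the reflections realizing such a gallery stabilize the relevant flat; (b) the positive-inner-product/``walls of a chamber meet nonobtusely'' fact (Lemma~\ref{Lemma:Proj}), guaranteeing that the orthogonal projection of $b_m$ onto $\Supp(\Delta_{m+1})$ inside $\Supp(\Delta_m)$ is one-dimensional and meets the cone over the distinguished chamber $B_{m+1}$ nontrivially.

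I expect ingredient (b) to be the main obstacle, because it is exactly where real reflection groups use positivity of a Gram matrix of simple roots, and there is no literal analogue for complex reflection groups. The resolution I would pursue is to work not in $V$ but in the real span of the centroids of a chamber of $K(\P)$: for a nonstarry regular complex polytope the centroids $O_{B_0},\ldots,O_{B_{\ell-1}}$ of a base flag span a genuine real simplex (this is how $B(\P)$ is a topological ball), and the walls of that simplex meet in dihedral angles $\pi/q_i$ (or $\pi/2$) governed by the Coxeter-like symbol $p_0[q_0]p_1\cdots$; hence the relevant real picture is an ordinary (real) Coxeter-type chamber even though $W$ itself is complex. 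That is precisely the role of following Coxeter's treatment in Section~\ref{Background}: the flag complex of a Shephard polytope carries a real ``shadow'' simplicial geometry on which Lemma~\ref{Lemma:Proj}-type inequalities hold. Once that real simplex picture is in place, the gallery-and-projection machinery of Lemma~\ref{C:approx} transfers, giving the cone point and hence local conicality; the remaining verifications (that $H_i^B\in\L_W$, that the reflections in a gallery stabilize the appropriate flat) are the same intersection-lattice bookkeeping already used for Coxeter groups and for Theorem~\ref{Commutative}.
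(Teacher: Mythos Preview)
Your handling of \eqref{S:B} and \eqref{S:C-M} is close to the paper's, which factors $\rho$ through the isomorphism $\Delta(W,R)\cong K(\P)$ of Orlik--Reiner--Shepler (Theorem~\ref{Parabolic}) and the realization $K(\P)\cong B(\P)$ of Orlik (Theorem~\ref{barycentric}), then cites Orlik's Milnor-fiber theorem (Theorem~\ref{CM}) for homotopy Cohen--Macaulayness.  Your argument for the \emph{strongly stratified} part of \eqref{S:stronglystratified}, however, is too quick: saying ``each $X\in\L_W$ is $V^{gW_Jg^{-1}}$ for a suitable parabolic'' assumes exactly the nontrivial point, namely that every parabolic subgroup of $W$ is a \emph{standard} parabolic for the chosen $R$.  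The paper instead invokes the Orlik--Solomon counting result (Theorem~\ref{Theorem:OrlikSolomon}) to show each $\Gamma_X$ has top-dimensional faces, which is a genuinely different input.

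For \eqref{S:locallyconical} your plan diverges substantially from the paper and, as written, has a real gap.  You propose to transport the Coxeter gallery-and-projection argument of Lemma~\ref{C:approx}, but that argument rests on structural features of Coxeter complexes that $K(\P)$ does not have: there is no notion of roots or convex subcomplexes of $K(\P)$ ensuring that intersections $X\cap\Delta^U$ are chamber complexes with gallery connectivity, nor do the reflections along a putative gallery in $K(\P)$ obviously stabilize the relevant flat and its orthogonal complement.  Your ``real shadow'' fix would at best control the geometry inside a single chamber; it does not recover the gallery/convexity machinery needed to propagate a cone point across $\Delta_{m+1}$.  The paper avoids all of this by using a different mechanism: Coxeter's orthoscheme property (Proposition~\ref{Prop:Orthoscheme}) for the centroids of a maximal flag, and a direct flag-comparison lemma (Proposition~\ref{unique}) showing that if two $k$-flags $\calF,\calF'$ have the same $\LinSpan$ of centroids, then for each face $F$ with $F_{i_s}\subseteq F\subsetneq F_{i_{s+1}}$ either $F'_{j_s}=F_{i_s}$ or no element of $\calF'$ lies in $F$.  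Applying this with $F\in\calB_U$ immediately produces an explicit cone point $F_{t_m}$ of the Quillen fiber, with a short dualization to handle the remaining case.  This bypasses galleries entirely and is what you are missing.
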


Note that Figure~\ref{starry} illustrates why the non-starry assumption is necessary; 
indeed, $\rho$ fails to be an embedding in the starry case 
when $\Lambda=\{O_{B_0},O_{B_1}\}$ and $B_0\subset B_1$ is a maximal flag.

The remainder of this section explains~\eqref{S:stronglystratified}-\eqref{S:C-M}, 
while~\eqref{S:locallyconical} will be established in the next section.  During 
our discussion, the reader should take note of our use of Theorems~\ref{barycentric} 
and~\ref{Theorem:OrlikSolomon}, two \emph{uniformly stated} theorems that are \emph{proven 
case-by-case}.  
In particular, Theorem~\ref{barycentric} relies on 
a theorem of Orlik and Solomon that says a Shephard group and an associated 
Coxeter group have the same discriminant matrices, a result relying on the classification 
of Shephard groups; see~\cite{OrlikSolomon:Disc}.  
However, up to the use of Theorems~\ref{barycentric} and~\ref{Theorem:OrlikSolomon}, our 
approach for Theorem~\ref{S:Thm} is case-free.

For each Shephard group, the invariant $f_1$ of smallest 
degree $d_1$ is unique, up to constant scaling.  For example, if 
$W$ has a real form, then $f_1=x_1^2+\cdots+x_\ell^2$ for some suitable set of coordinates.  
The \emph{Milnor fiber of $W$} is defined to be 
$f_1^{-1}(1)$, where $f_1$ is regarded as a map $f_1:V\to \mathbb C$.  
In~\cite{OrlikMilnor}, Orlik constructs a $W$-equivariant strong deformation retraction 
of the Milnor fiber 
$f_1^{-1}(1)$ onto a simplicial complex $\Gamma$ homeomorphic to $B(\P)$, which he shows is 
a geometric realization of the flag complex $K(\P)$.  

\begin{theorem}[Orlik~\cite{OrlikMilnor}]\label{barycentric}  
Let $W\subset {\rm{GL}}(V)$ be a Shephard group with invariant $f_1:V\to \mathbb C$ of smallest 
degree.  Then there exists a simplicial complex $\Gamma\subset f_1^{-1}(1)$ called the 
\emph{Milnor fiber complex} 
containing the vertices of $\P$ such that
\begin{enumerate}
\item
\begin{enumerate}
\item There is an equivariant strong deformation retract $\pi: f_1^{-1}(1)\to\Gamma$.
\item  For each $X\in \L_W$, the set 
$\Gamma_X:=\Gamma\cap X$ is a subcomplex of $\Gamma$, and $\pi$ 
restricts to a strong deformation retract of $f_1^{-1}(1)\cap X$ onto $\Gamma_X$.
\end{enumerate}
\item\label{skeleton}  Let $\Gamma^k$ and $\Gamma_X^k$ denote the $k$-skeleton of each complex.  
Then 
\begin{enumerate}
\item  $\Gamma_X^k=\Gamma^k\cap X$ for all $k$, and
\item  $\Gamma^k\diff\Gamma^{k-1}=\bigcup_{\dim X=k+1}(\Gamma_X^k\diff\Gamma_X^{k-1})$ is a 
disjoint union.
\end{enumerate}
\item\label{realization}
\begin{enumerate}
\item  $\Gamma$ is $W$-equivariantly homeomorphic to $B(\P)$, and
\item  $B(\P)$ is a geometric realization of the flag complex $K(\P)$ via
  \[F_{i_1}\subset\cdots \subset F_{i_k}\longmapsto 
\Hull( O_{F_{i_1}},\ldots, O_{F_{i_k}}).\]
\end{enumerate}
\end{enumerate}
\end{theorem}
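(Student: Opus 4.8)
The plan is to reconstruct Orlik's argument from~\cite{OrlikMilnor}, building $\Gamma$ together with the retraction $\pi$ by a single induction up the lattice of flats $\L_W$, folding the compatibility clauses into the inductive hypothesis, and identifying the outcome with $B(\P)$ and $K(\P)$ only at the end. First I would normalize. Because $W$ acts irreducibly the lowest-degree invariant $f_1$ is unique up to a scalar, and because the vertices of $\P$ form a single $W$-orbit, $f_1$ is constant and nonzero on them, so after rescaling $\P$ the vertices lie on $f_1^{-1}(1)$. For a face $F$ of $\P$ let $W_F$ denote its pointwise stabilizer; by Steinberg's theorem $W_F$ is a reflection subgroup, so $X_F:=V^{W_F}$ lies in $\L_W$, the map $F\mapsto X_F$ is (by regularity of $\P$) an embedding of the face lattice of $\P$ into $\L_W$ sending flags to chains, and each centroid $O_F$ lies on $X_F$. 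The one nontrivial input at this stage is the general-position statement that for every flag $F_{i_1}\subset\cdots\subset F_{i_k}$ the centroids $O_{F_{i_1}},\dots,O_{F_{i_k}}$ are affinely independent with linear span a flat of $\L_W$ of dimension $k$; this is exactly where the nonstarry hypothesis is used — in the starry case two centroids can become linearly dependent together with the ray through the origin and $\rho$ fails to embed, as Figure~\ref{starry} shows.

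Over $X\in\L_W$ taken in order of increasing dimension, I would establish the inductive claim: $f_1^{-1}(1)\cap X$ admits a $\Stab_W(X)$-equivariant strong deformation retraction onto an $(\dim X-1)$-dimensional subcomplex $\Gamma_X$ with $\Gamma_X\cap Y=\Gamma_Y$ for all $Y\subsetneq X$, whose top-dimensional cells have $\LinSpan$ equal to $X$ and meet no proper subflat in their relative interiors, and containing the vertices of $\P$ that lie in $X$ among its vertices. Granted this for all proper subflats of $X$, one has the subcomplex $\bigcup_{Y\subsetneq X}\Gamma_Y$ inside $f_1^{-1}(1)\cap\bigl(\bigcup_{Y\subsetneq X}Y\bigr)$, and the inductive step is to extend the retraction over the remainder of $f_1^{-1}(1)\cap X$. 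This is a Morse-theoretic, stratified-flow argument along the gradient of a suitable $\Stab_W(X)$-invariant real function built from $f_1$, using the $W$-invariant Hermitian form to keep the flow equivariant and compatible with the pieces already constructed, with the vertices of $\P$ in $X$ acting as sinks. Taking $X=V$ produces $\Gamma$ and $\pi$, giving clauses (1)(a)--(1)(b); clause (2)(a), namely $\Gamma_X^k=\Gamma^k\cap X$, follows because a cell lies in $X$ precisely when its linear span does; and clause (2)(b) just records that the open $k$-cells of $\Gamma$ are partitioned by the $(k+1)$-dimensional flats that contain them, which is the balancedness written into the inductive claim.

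It remains to identify $\Gamma$. By construction its face poset is the poset of chains of flats arising as $\LinSpan$'s of cells, which by the stratification set up above is the image of the face lattice of $\P$; so $\Gamma$ realizes $K(\P)$, equivalently $\Delta(W,R)=\{wW_J\}$ ordered by reverse inclusion, since $W$ is transitive on the chambers of a regular polytope. That $B(\P)$ is likewise a realization of $K(\P)$ via $F_{i_1}\subset\cdots\subset F_{i_k}\mapsto\Hull(O_{F_{i_1}},\dots,O_{F_{i_k}})$ is the general-position statement together with the standard fact that distinct flags then give simplices meeting exactly in the simplex of their common subflag. Finally, the two realizations $\Gamma$ and $B(\P)$ of this single abstract complex are matched $W$-equivariantly by pushing each simplex of $B(\P)$, flat by flat, radially out onto the corresponding cell of $\Gamma$ along a straight-line homotopy; this gives the homeomorphism asserted in~(3).

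The main obstacle is the inductive step of the second paragraph: producing the $\Stab_W(X)$-equivariant deformation retraction of $f_1^{-1}(1)\cap X$ with all of the stated compatibility, which amounts to controlling the local topology of the Milnor fiber transverse to each flat. This is precisely where the uniformly stated conclusion becomes a case-by-case matter: the transverse local models are governed by the rank-two sub-configurations, and one reduces their analysis to the real-reflection-group case — where $f_1^{-1}(1)\simeq\mathbb S^{\ell-1}$ and everything is classical — by appealing to Orlik and Solomon's theorem~\cite{OrlikSolomon:Disc} that a Shephard group and its associated Coxeter group have equal discriminant matrices, an equality that is itself verified group by group from the classification.
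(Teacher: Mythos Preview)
The paper does not supply its own proof of this theorem; it simply cites Orlik~\cite{OrlikMilnor}, pointing to Theorem~4.1(i)--(ii) there for parts (1)--(2) and to the proof of Theorem~5.1 for part (3), and explicitly flags that the argument rests on the case-by-case Orlik--Solomon discriminant comparison~\cite{OrlikSolomon:Disc}. So there is no in-paper argument to compare against beyond the citation.

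Your sketch is a plausible high-level reconstruction of Orlik's strategy, and you correctly locate the case-by-case ingredient. A few places where the outline is looser than a proof would allow. First, your identification of the face poset of $\Gamma$ with $K(\P)$ is too quick: your induction indexes cells by chains in $\L_W$, not by flags of $\P$, and nothing in the setup guarantees that every flat supporting a top cell comes from a face of $\P$, nor that the cell counts agree; in the present paper that matching is handled separately via the Orlik--Solomon count (Theorem~\ref{Theorem:OrlikSolomon}), not as a byproduct of the retraction. Second, the affine independence of centroids along a flag is the orthoscheme property (Proposition~\ref{Prop:Orthoscheme}, due to Coxeter) and does not itself invoke nonstarryness; the nonstarry hypothesis is what ensures the \emph{global} injectivity of $K(\P)\to B(\P)$, i.e., that distinct flags give simplices meeting only in the simplex of their common subflag. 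Third, the proposed ``radial straight-line'' homeomorphism $B(\P)\to\Gamma$ presumes $f_1$ is nonvanishing along the connecting segments and that the push is injective, neither of which is automatic. These are exactly the technical points that Orlik's paper has to work for, so your last paragraph is right to name the inductive retraction as the real obstacle rather than something one can wave through.
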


Parts (1) and (2) are \cite[Thm 4.1(i)-(ii)]{OrlikMilnor}, while (3) uses the 
proof of \cite[Thm 5.1]{OrlikMilnor}.   A nice discussion of the related theory is 
found in~\cite{OrlikReinerShepler}.

Using the additional property that $f_1^{-1}(1)$ has an isolated critical point at the origin, 
Orlik was able to describe the topology of the flag complex $K(\P)$:  

\begin{theorem}[Orlik~\cite{OrlikMilnor}]\label{CM}  
$K(\P)$ is homotopy Cohen-Macaulay, and is homotopy equivalent 
to a wedge of $(d_1-1)^\ell$-spheres of dimension $\ell-1$.
\end{theorem}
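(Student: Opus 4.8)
The plan is to extract both statements from the Milnor fibre complex $\Gamma$ of Theorem~\ref{barycentric}, together with Milnor's theory of isolated hypersurface singularities. The only non-formal ingredient needed is that the smallest-degree invariant $f_1$ has an isolated critical point at the origin; this is checked case by case, and enters through the Orlik--Solomon comparison of discriminants.

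First I would fix the homotopy type of $K(\P)$. By Theorem~\ref{barycentric}, $K(\P)$ is realised as $B(\P)$, which is $W$-equivariantly homeomorphic to $\Gamma$, and $\Gamma$ is a strong deformation retract of the Milnor fibre $f_1^{-1}(1)$; hence $K(\P)\homotopic f_1^{-1}(1)$. Since $f_1$ is homogeneous of degree $d_1$ with an isolated singularity at the origin, Milnor's fibration theorem says $f_1^{-1}(1)$ is homotopy equivalent to a wedge of $(\ell-1)$-spheres, the number of spheres being the Milnor number $\mu=\dim_{\mathbb C}\mathbb C[x_1,\dots,x_\ell]/J$, where $J=\bigl(\tfrac{\partial f_1}{\partial x_1},\dots,\tfrac{\partial f_1}{\partial x_\ell}\bigr)$ is the Jacobian ideal. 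The isolated-singularity hypothesis makes the $\ell$ partials a homogeneous system of parameters, hence a regular sequence, so $\mathbb C[x_1,\dots,x_\ell]/J$ is a graded complete intersection with Hilbert series $\bigl((1-t^{d_1-1})/(1-t)\bigr)^{\ell}=(1+t+\cdots+t^{d_1-2})^{\ell}$; evaluating at $t=1$ (the algebra is finite-dimensional) gives $\mu=(d_1-1)^{\ell}$. This proves the wedge-of-spheres assertion, and — being uniform in the group — it holds for every Shephard group.

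Next I would deduce homotopy Cohen--Macaulayness. The complex $K(\P)$ is pure of dimension $\ell-1$, because the defining properties \ref{poly:1} and \ref{poly:3} of a polytope let one extend any flag to a chamber $F_0\subset\cdots\subset F_{\ell-1}$. For a nonempty flag $\sigma=(F_{i_1}\subset\cdots\subset F_{i_k})$ the link splits as a join
\[\lk_{K(\P)}(\sigma)\;=\;K(\P^{(0)})*K(\P^{(1)})*\cdots*K(\P^{(k)}),\]
where $\P^{(0)}=[\varnothing,F_{i_1}]$, $\P^{(j)}=[F_{i_j},F_{i_{j+1}}]$ for $1\le j\le k-1$, and $\P^{(k)}=[F_{i_k},V]$, a section of rank $\le 1$ contributing the empty complex. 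Each such section is again a nonstarry regular complex polytope — it corresponds to a connected sub-symbol of $\P$, so its symmetry group is a parabolic subgroup of $W$ and hence again a Shephard group — and it has strictly smaller rank than $\P$. By the first part, each $K(\P^{(j)})$ is therefore a wedge of spheres of its top dimension, so it is $(\dim K(\P^{(j)})-1)$-connected. Because the join of an $a$-connected complex and a $b$-connected complex is $(a+b+2)$-connected while $\dim(A*B)=\dim A+\dim B+1$, the join $\lk_{K(\P)}(\sigma)$ is $(\dim\lk_{K(\P)}(\sigma)-1)$-connected; the case $\sigma=\varnothing$ is covered by the wedge-of-spheres computation for $K(\P)$ itself. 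Together with purity, this is precisely the homotopy Cohen--Macaulay condition.

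The hard part is the analytic input isolated in the first paragraph — that $f_1$ has an isolated critical point at the origin — which is exactly where the classification of Shephard groups is used. Once that and Theorem~\ref{barycentric} are granted, the rest is Milnor's theorem, the Hilbert-series computation of $\mu$, and the elementary join/connectivity bookkeeping; within the combinatorial half the only point needing care is the identification of the sections of $\P$ as lower-rank regular complex polytopes, so that the wedge-of-spheres conclusion is available for the factors of each link.
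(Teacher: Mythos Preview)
The paper does not supply its own proof of this statement; it is quoted as a result of Orlik~\cite{OrlikMilnor}, preceded only by the remark that the key extra ingredient is that $f_1^{-1}(1)$ has an isolated critical point at the origin. Your sketch reconstructs Orlik's argument correctly in outline: the identification $K(\P)\simeq f_1^{-1}(1)$ via Theorem~\ref{barycentric}, Milnor's theorem for an isolated homogeneous singularity, and the Hilbert-series computation of $\mu=(d_1-1)^\ell$ are all exactly the right ingredients for the wedge-of-spheres claim, and the link-as-join-of-sections argument is a standard and valid route to homotopy Cohen--Macaulayness.

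Two small corrections. First, your description of $\P^{(k)}=[F_{i_k},V]$ as ``a section of rank $\le 1$ contributing the empty complex'' is a slip: its rank is $\ell-i_k$, which can be anything from $1$ to $\ell$, and it must be treated on the same footing as the other factors. (Perhaps you meant to note that any factor of rank $1$ --- a covering pair --- contributes the empty complex, which is the $(-1)$-sphere and behaves correctly under the join connectivity bound.) Second, the assertion that every section $[F_i,F_j]$ of a nonstarry regular complex polytope is again a \emph{nonstarry} regular complex polytope deserves a word: it holds because the section carries the sub-symbol $p_{i+1}[q_{i+1}]\cdots[q_{j-1}]p_{j}$, and all such symbols appear in Coxeter's list of nonstarry polytopes, but this is another appeal to the classification rather than an a priori fact. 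With these points addressed, your argument is complete.
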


We will establish \eqref{S:stronglystratified} and \eqref{S:C-M} of 
Theorem~\ref{S:Thm} 
by combining Theorem~\ref{barycentric}\eqref{realization} and Theorem~\ref{CM} with 
the following

\begin{theorem}[Orlik, Reiner, Shepler~\cite{OrlikReinerShepler}]\label{Parabolic}  
Let $W$ be a Shephard group of $\P$, and let $\calB$ be a base flag with corresponding 
distinguished generating set $R$.  Then the map
\begin{align*}
\phi:K(\P)&\longrightarrow  \{gW_J\ :\ g\in W,\ J\subseteq R\}\\
g(B_{i_1}\subset\cdots\subset B_{i_s})&\stackrel{\phi}{\longmapsto} 
gW_{R\diff\{r_{i_1},\ldots, r_{i_s}\}}
\end{align*}
is a $W$-equivariant 
isomorphism 
\end{theorem}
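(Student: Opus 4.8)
The plan is to prove that $\phi$ is a well-defined bijection which, together with its inverse, is order-preserving and $W$-equivariant, by reducing every one of these properties to the single stabilizer identity
\[\Stab_W(\calB_U)=W_{R\diff U}\qquad\text{for all }U\subseteq R,\]
where, for $U=\{r_{i_1},\dots,r_{i_s}\}$, I write $\calB_U=B_{i_1}\subset\cdots\subset B_{i_s}$ and $\Stab_W$ of a flag means the subgroup of $W$ fixing each face of that flag setwise. Granting this identity, $W$-equivariance of $\phi$ is immediate since $\phi(h\cdot g\calB_U)=(hg)W_{R\diff U}=h\cdot\phi(g\calB_U)$; surjectivity is the observation that $gW_J=\phi(g\calB_{R\diff J})$; and $\phi$ is well-defined and injective because an equality $g\calB_U=g'\calB_{U'}$ --- of flags, or of their $\phi$-images --- forces $U=U'$ (in the first case by matching faces by dimension, since an automorphism of $\P$ carries an $i$-dimensional face to an $i$-dimensional face; in the second because a left coset determines its parabolic subgroup, hence $R\diff U$) and then $g'^{-1}g\in\Stab_W(\calB_U)=W_{R\diff U}$. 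That $\phi$ and $\phi^{-1}$ are order-preserving then follows once one also knows that $J\mapsto W_J$ is injective on subsets of $R$, that $W_J\subseteq W_{J'}$ implies $J\subseteq J'$, and that $r_i\in W_J$ implies $r_i\in J$; these are elementary for the well-generated system $(W,R)$, because the normals $\alpha_0,\dots,\alpha_{\ell-1}$ to the reflecting hyperplanes $H_0,\dots,H_{\ell-1}$ are linearly independent ($\bigcap_i H_i=V^{\langle R\rangle}=V^W=0$ by irreducibility), so $J\mapsto V^{W_J}=\bigcap_{r_i\in J}H_i$ is an order-reversing injection and the three assertions read off from it.

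It remains to prove the stabilizer identity. The inclusion $W_{R\diff U}\subseteq\Stab_W(\calB_U)$ is immediate from the construction of distinguished generators recalled in Section~\ref{Background}: for each index $k$ the reflection $r_k$ stabilizes $B_0\subset\cdots\subset\hat B_k\subset\cdots\subset B_{\ell-1}$, hence stabilizes every face $B_i$ with $i\neq k$; letting $k$ range over the indices of $R\diff U$ gives the inclusion. For the reverse inclusion I would use Orlik's realization of the flag complex. By Theorem~\ref{barycentric}\eqref{realization}, $B(\P)$ is a geometric realization of $K(\P)$ in which the flag $\calB_U$ becomes the simplex $\Lambda_U=\Hull(\lambda_i:r_i\in U)$ with $\lambda_i=O_{B_i}$; this realization is $W$-equivariant since $w\cdot O_F=O_{wF}$. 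Now $K(\P)$ is the order complex of the graded face poset of $\P$, hence balanced with the $\ell$-coloring $F\mapsto\dim F$, and the $W$-action respects this coloring because $\dim wF=\dim F$. The colors of the vertices $\{\lambda_i:r_i\in U\}$ of $\Lambda_U$ are pairwise distinct, so any $w$ stabilizing the simplex $\Lambda_U$ --- equivalently, any $w\in\Stab_W(\calB_U)$ --- fixes each $\lambda_i$ individually; thus $\Stab_W(\calB_U)$ is the pointwise $W$-stabilizer of $\Span(\lambda_i:r_i\in U)$. This span equals $\bigcap_{r_j\in R\diff U}H_j=V^{W_{R\diff U}}$, since both sides have dimension $|U|$ and the left is contained in the right because $\lambda_i\in H_j$ whenever $j\neq i$. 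Hence, by Steinberg's fixed-point theorem (pointwise stabilizers in a complex reflection group are themselves reflection groups), $\Stab_W(\calB_U)$ is the parabolic closure of the reflection subgroup $W_{R\diff U}$, and the stabilizer identity is precisely the statement that the standard parabolic $W_{R\diff U}$ is parabolic --- that every reflection of $W$ whose hyperplane contains $V^{W_{R\diff U}}$ already lies in $W_{R\diff U}$.

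This last statement is the one genuinely nontrivial step, and is the content of the Orlik--Reiner--Shepler result quoted here. The idea: $R\diff U$ cuts the Coxeter-like diagram of $(W,R)$ into segments; $W_{R\diff U}$ factors as the corresponding direct product of smaller Shephard groups (generators in different segments commute, since their indices differ by at least two), each realized as the symmetry group of a section of $\P$ cut out between consecutive faces of $\calB_U$, with the improper faces adjoined; and an induction on $\ell=\dim V$, applying Orlik's Milnor-fiber-complex description (Theorem~\ref{barycentric}) to each section, shows that no further reflection of $W$ can fix $V^{W_{R\diff U}}$ pointwise. I expect this inductive identification of standard parabolic subgroups with symmetry groups of polytope sections to be the main obstacle; like Theorem~\ref{barycentric} itself, it ultimately leans on the classification of Shephard groups, through Orlik and Solomon's comparison of discriminant matrices. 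Everything else in the argument is formal.
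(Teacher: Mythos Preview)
Your approach is essentially the same as the paper's: the paper records no proof beyond the single sentence ``The crux of the proof is that $\Stab_W(B_{j_1}\subset\cdots\subset B_{j_s})=W_{R\setminus\{r_{j_1},\ldots,r_{j_s}\}}$,'' attributing the result to~\cite{OrlikReinerShepler}, and you have correctly identified this stabilizer identity as the heart of the matter and carefully reduced all the formal properties (well-definedness, bijectivity, equivariance, order-preservation in both directions) to it. Your further reduction of the hard inclusion to the assertion that each standard parabolic $W_{R\setminus U}$ is parabolic (i.e., a pointwise stabilizer) is exactly right, and indeed the paper later isolates this very fact as Theorem~\ref{Theorem:Parabolic}, noting that it is obtained from the classification case-by-case. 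One small simplification: you need not invoke the full geometric realization of Theorem~\ref{barycentric} to see that $w\in\Stab_W(\calB_U)$ fixes each centroid $\lambda_i=O_{B_i}$; since $w$ stabilizes $B_i$ setwise it permutes the vertices of $B_i$ and hence fixes their average.
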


The crux of the proof is that 
\[
\mathrm{Stab}_W(B_{j_1}\subset B_{j_2}\subset\cdots\subset B_{j_s})
=
W_{R\diff\{r_{j_1},r_{j_2},\ldots,r_{j_s}\}}
.\]

The type function on $K(\P)$ is naturally given by 
\[\type(B_{i_1}\subset\cdots\subset B_{i_s})=\{r_{i_1},\ldots, r_{i_s}\}.\]

\begin{proof}[Proof of Theorem~\ref{S:Thm}~\eqref{S:B},~\eqref{S:C-M}, and the well-framed 
component of~\eqref{S:stronglystratified}]
Notice that $\rho$ factors as
\[\Delta\stackrel{\phi^{-1}}{\longrightarrow} K(\P)\stackrel{\sim}\longrightarrow 
B(\P)\hookrightarrow V,\]
where $\phi$ is as in Theorem~\ref{Parabolic}, and 
$K(\P)\stackrel{\sim}{\longrightarrow} B(\P)$ is 
provided by Theorem~\ref{barycentric}\eqref{realization}(a).  Hence, the triple 
$(W,R,\Lambda)$ is well-framed and $\rho(\Delta)=B(\P)$.  Employing 
Theorem~\ref{CM} yields~\eqref{S:C-M}.
\end{proof}

All that remains for 
establishing~\eqref{S:stronglystratified}-\eqref{S:C-M} is to show that  
each $X\in \L_W$ contains the image under $\rho$ of a $(\dim X-1)$-simplex of $\Delta$.  
This follows from the following 
beautiful theorem that merges work of Orlik and Solomon~\cite[Thm. 6]{OrlikSolomon} 
and Orlik~\cite[Thm. 4.1(iii)]{OrlikMilnor}; this amalgamation appears in the latter 
paper of Orlik.

\begin{theorem}[Orlik-Solomon]\label{Theorem:OrlikSolomon}  
Let $W$ the symmetry group of a nonstarry regular complex polytope $\P$.  Let 
$X\in \L_W$, and write $\dim X=n$.  Then there exists strictly 
positive integers $b_1^X,\ldots, b_n^X$ such that 
\begin{equation}\label{Equation:OrlikSolomon}
|\Gamma_X^{n-1}\diff\Gamma_X^{n-2}|=(m_1+b_1^X)\cdots(m_1+b_n^X),
\end{equation}
where $m_1=d_1-1$ and $\Gamma_X^{k}$ is the $k$-skeleton of the 
restricted Milnor fiber complex $X\cap \Gamma$.
\end{theorem}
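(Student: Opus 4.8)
The plan is to reduce Theorem~\ref{Theorem:OrlikSolomon} to the two ingredients already quoted in the excerpt: Orlik's count for the Milnor fiber complex $\Gamma$ itself, and the Orlik--Solomon theorem equating the discriminant (hence the exponents) of a Shephard group with those of an associated Coxeter group. Concretely, one first localizes at $X\in\L_W$. By Theorem~\ref{barycentric}\eqref{skeleton}, the restricted complex $\Gamma_X=\Gamma\cap X$ has $\Gamma_X^k=\Gamma^k\cap X$, and the top-dimensional open cells of $\Gamma_X$ (those in $\Gamma_X^{n-1}\diff\Gamma_X^{n-2}$, where $n=\dim X$) are exactly the $(n-1)$-cells of $\Gamma$ that lie in $X$ but in no proper flat below $X$. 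So the left-hand side of~\eqref{Equation:OrlikSolomon} is an intrinsic count for the arrangement $W$ acts on, restricted to the flat $X$.

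The next step is to invoke the structure of the pointwise stabilizer $W_X=\Stab_W(X)$, which is itself a (parabolic) reflection group acting on $X$, and to apply Orlik's Milnor-fiber machinery relative to $X$: by Theorem~\ref{barycentric}(1)(b), $\pi$ restricts to a deformation retract of $f_1^{-1}(1)\cap X$ onto $\Gamma_X$, and one can identify $\Gamma_X$ with the analogous complex built from the restriction of $f_1$ to $X$. The heart of the argument is then the factorization formula: Orlik~\cite[Thm. 4.1(iii)]{OrlikMilnor} expresses $|\Gamma_X^{n-1}\diff\Gamma_X^{n-2}|$ as a product $\prod_{i=1}^n(m_1+b_i^X)$, where the $b_i^X$ are the ``local exponents'' attached to the pair $(W,X)$. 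To see that these $b_i^X$ are \emph{strictly positive integers}, I would appeal to the Orlik--Solomon discriminant theorem: a Shephard group and its associated Coxeter group share discriminant matrices, so the numbers $b_i^X$ coincide with the corresponding data for a real reflection group, where positivity and integrality are classical (they are differences of consecutive exponents along the restriction, or equivalently coexponents of the parabolic); this is exactly the case-by-case input flagged in the discussion preceding the theorem.

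I would organize the write-up as: (1) identify $|\Gamma_X^{n-1}\diff\Gamma_X^{n-2}|$ via Theorem~\ref{barycentric}\eqref{skeleton} with the count of top cells of the restricted Milnor fiber complex; (2) apply Orlik's relative Milnor-fiber theorem to the reflection group $W_X$ acting on $X$ with the restricted lowest-degree invariant, obtaining the product formula with $m_1$ unchanged (since $f_1$ restricts to the lowest-degree invariant on $X$, its degree $d_1$ and hence $m_1=d_1-1$ are the same) and exponents $b_i^X$; (3) cite Orlik--Solomon's discriminant coincidence to transfer positivity/integrality of the $b_i^X$ from the Coxeter side.

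\textbf{Main obstacle.} The delicate point is step (2): making rigorous that $f_1|_X$ really is (a scalar multiple of) the lowest-degree basic invariant for $W_X$ acting on $X$, so that the same $m_1$ reappears and Orlik's formula applies verbatim to the restriction. This requires knowing that restriction of invariants behaves well for parabolic subgroups of Shephard groups --- which again is where the classification-dependent Orlik--Solomon input does the real work --- together with care that $\Gamma_X$ as defined by intersection genuinely agrees with the Milnor fiber complex constructed intrinsically from $(W_X, X, f_1|_X)$. Everything else is bookkeeping with the skeleton decomposition in Theorem~\ref{barycentric}.
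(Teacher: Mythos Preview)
The paper does not give its own proof of Theorem~\ref{Theorem:OrlikSolomon}; it is quoted as a black box from the literature (Orlik--Solomon~\cite[Thm.~6]{OrlikSolomon} together with Orlik~\cite[Thm.~4.1(iii)]{OrlikMilnor}, the amalgamation appearing in the latter), and is then used immediately to finish the proof of Theorem~\ref{S:Thm}\eqref{S:stronglystratified}. So there is no in-paper argument to compare your proposal against; your sketch is an attempt to reconstruct what the cited papers do.

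That said, your reconstruction has a genuine conceptual slip. In step~(2) and in your ``main obstacle'' paragraph you repeatedly speak of ``the reflection group $W_X$ acting on $X$'' and of ``$f_1|_X$ being the lowest-degree basic invariant for $W_X$ on $X$.'' But $W_X=\Stab_W(X)$ is the \emph{pointwise} stabilizer of $X$: it acts trivially on $X$ and is a reflection group only on the complement $X^\perp$. Consequently every polynomial on $X$ is $W_X$-invariant, and the phrase ``lowest-degree basic invariant of $W_X$ on $X$'' is vacuous. The integers $b_i^X$ in~\eqref{Equation:OrlikSolomon} are not exponents of the parabolic $W_X$; they are the (co)exponents attached to the \emph{restricted arrangement} $\mathcal A^X$ in $X$, i.e., the roots of its characteristic polynomial, and in general there is no reflection group on $X$ whose invariant degrees produce them. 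Your instinct to localize at $X$ and then transfer positivity from the Coxeter side via the Orlik--Solomon discriminant coincidence is the right one, but the object being localized is the arrangement, not the stabilizer group; once you reframe step~(2) in terms of $\mathcal A^X$ and the restricted Milnor fibration $f_1|_X:X\to\mathbb C$ rather than in terms of $W_X$, the outline matches what Orlik actually does.
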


\begin{proof}[Proof of Theorem~\ref{S:Thm}\eqref{S:stronglystratified}]  
Consider $X\in \L_W$ with $\dim X=n\geq 1$.   By Theorem~\ref{Theorem:OrlikSolomon},
$\Gamma_X^{n-1}\diff\Gamma_X^{n-2}$ is nonempty if $m_1\geq 0$, as this implies 
that the right side of~\eqref{Equation:OrlikSolomon} 
is strictly positive.   But this is clear, since 
$d_1=\deg f_1\geq 1$ for any set of basic invariants $f_1,\ldots, f_\ell$.  
(In fact, $d_1\geq 2$, with equality if and only if $W$ is a real reflection group.)

By Theorem~\ref{barycentric}, each 
$(n-1)$-simplex of $\Gamma\cap X$ corresponds to an $(n-1)$-simplex of $B(\P)\cap X$.  
Hence, $(B(\P)\cap X)^{n-1}$ is nonempty.  Since $\rho(\Delta)= B(\P)$, it follows 
by considering dimension that $X=\Supp(\sigma)$ for some $\sigma\in\Delta$.
\end{proof}

\section{Shephard systems are locally conical}

This section is dedicated to proving~\eqref{S:locallyconical} of Theorem~\ref{S:Thm}.  Throughout, $(W,R,\Lambda)$ will be a fixed Shephard system, and 
$F_i$ will denote an $i$-dimensional face of $\P$.  
Because we will need to work with faces of $\P$ instead of centroids,
we start with some straightforward results relating the two.  The most important of these 
results says that centroids of a maximal flag (together with the origin $O_{F_\ell}$)
form an \emph{orthoscheme}; see~\cite[p. 116]{Coxeter}.  
More precisely, we have the following
\begin{proposition}[Coxeter]\label{Prop:Orthoscheme}
Let $\P$ be a regular complex polytope, and let
\[\calF=F_0\subset F_1\subset\cdots\subset F_{\ell-1}\]
be a maximal flag of faces.  Then the vectors
\begin{equation}\label{Eq:Ortho}
O_{F_{\ell-1}}-O_{F_{\ell}},\qquad O_{F_{\ell-2}}-O_{F_{\ell-1}},\qquad\ldots,\qquad 
O_{F_0}-O_{F_{1}}
\end{equation}
form an orthogonal basis for $V$.
\end{proposition}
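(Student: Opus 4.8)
The plan is to induct on $\ell$ and reduce the orthogonality statement to a statement about the apex face $F_0$ (a vertex of $\P$) and a suitable hyperplane reflection. Fix the maximal flag $\calF = F_0\subset F_1\subset\cdots\subset F_{\ell-1}$, and recall that by the definition of a Shephard system the distinguished generators $R = \{r_0,\dots,r_{\ell-1}\}$ attached to $\calF$ satisfy $\langle O_{F_k}\ :\ k\ne i\rangle \subseteq H_i$, the reflecting hyperplane of $r_i$. The key observation is that $r_0$ fixes the entire partial flag $F_1\subset\cdots\subset F_{\ell-1}$, hence fixes each centroid $O_{F_1},\dots,O_{F_{\ell-1}}$ as well as $O_{F_\ell}=0$; therefore it fixes $O_{F_k}-O_{F_{k+1}}$ for every $k\ge 1$, i.e.\ it fixes the span of the last $\ell-1$ vectors in~\eqref{Eq:Ortho}. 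Since $r_0$ is a reflection with reflecting hyperplane $H_0$, its fixed space is exactly $H_0$, which is an $(\ell-1)$-dimensional subspace. So the span of $O_{F_{\ell-1}}-O_{F_\ell},\dots,O_{F_1}-O_{F_2}$ is contained in $H_0$; comparing dimensions (these $\ell-1$ vectors will be shown independent by the inductive hypothesis, applied to the regular polytope structure on the face $F_{\ell-1}$), that span equals $H_0$.

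Next I would show that the remaining vector $O_{F_0}-O_{F_1}$ is orthogonal to $H_0$, which finishes the induction. Here is where the regularity of $\P$ and the precise choice $\lambda_i = O_{F_i}$ enter. The point $O_{F_0}$ is the vertex $F_0$ itself. Because $W$ acts transitively on chambers and $\Stab_W(F_1\subset\cdots\subset F_{\ell-1})=\langle r_0\rangle$ acts on the vertices of $F_1$ lying in this flag's combinatorial ``edge figure'' as a cyclic group, the centroid $O_{F_1}$ is the average of the $\langle r_0\rangle$-orbit of $O_{F_0}$ within $F_1$ (more precisely, of the vertices of $F_1$). Averaging a vector over a finite cyclic group generated by an orthogonal transformation lands in the fixed space of that transformation; hence $O_{F_1}\in H_0$, and $O_{F_0}-O_{F_1}$ is congruent mod $H_0$ to $O_{F_0}-(\text{its average over }\langle r_0\rangle)$, which is a vector lying in the $(-1)$- (or more generally nontrivial) eigenspaces of $r_0$, i.e.\ in $H_0^\perp$ — using that $r_0$ is unitary for the $W$-invariant Hermitian form $\langle-,-\rangle$, so $V = H_0 \perp H_0^\perp$. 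I should be slightly careful that $O_{F_0}\notin H_0$ (equivalently $O_{F_0}-O_{F_1}\ne 0$): this is exactly nondegeneracy of the orthoscheme and follows because otherwise the flag complex $K(\P)$ would collapse in dimension, contradicting that $\rho$ is an embedding onto $B(\P)$ (Theorem~\ref{barycentric}) with $\dim = \ell-1$.

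Assembling: by induction the $\ell-1$ vectors $O_{F_{\ell-1}}-O_{F_\ell},\dots,O_{F_1}-O_{F_2}$ form an orthogonal basis of $H_0$ (apply the inductive hypothesis to the regular complex polytope naturally carried by $F_{\ell-1}$, whose symmetry data is the parabolic $W_{R\diff\{r_{\ell-1}\}}$ acting on $\LinSpan(F_{\ell-1})$, after checking this ambient span is $H_0$ — which again is the statement that $r_0$'s fixed hyperplane is spanned by the relevant centroids), and $O_{F_0}-O_{F_1}\in H_0^\perp$ is a nonzero vector orthogonal to all of them. Hence all $\ell$ vectors in~\eqref{Eq:Ortho} are pairwise orthogonal and nonzero, so they form an orthogonal basis of $V$. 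I expect the main obstacle to be the bookkeeping in the inductive step: identifying $F_{\ell-1}$ (resp.\ more generally each $F_k$) with a lower-dimensional regular complex polytope whose Shephard system is the relevant parabolic subsystem, and verifying that its ambient linear span is precisely the hyperplane $H_0$ so that the inductive hypothesis applies verbatim; the ``averaging over the cyclic stabilizer'' lemma and the unitarity argument are routine once that is set up.
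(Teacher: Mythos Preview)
The paper does not prove this proposition; it is stated as a result of Coxeter with a citation to \cite[p.~116]{Coxeter}.  So let me assess your argument on its own merits.

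Your averaging step is fine: the vertices of the edge $F_1$ are exactly the $\langle r_0\rangle$-orbit of $F_0$, so $O_{F_1}$ is the $\langle r_0\rangle$-average of $O_{F_0}$, hence the orthogonal projection of $O_{F_0}$ onto $H_0$; thus $O_{F_0}-O_{F_1}\in H_0^\perp$.  You also correctly observe that $O_{F_k}-O_{F_{k+1}}\in H_0$ for all $k\ge 1$.

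The gap is in the inductive step, where you have crossed two dual recursions.  Applying the inductive hypothesis to the facet $F_{\ell-1}$, viewed as a regular $(\ell-1)$-polytope with centre $O_{F_{\ell-1}}$, gives orthogonality of
\[
O_{F_{\ell-2}}-O_{F_{\ell-1}},\ \ldots,\ O_{F_0}-O_{F_1},
\]
not of $O_{F_{\ell-1}}-O_{F_\ell},\ldots,O_{F_1}-O_{F_2}$ as you claim: the first vector $O_{F_{\ell-1}}-O_{F_\ell}=O_{F_{\ell-1}}$ involves the centre of $\P$, not the centre of $F_{\ell-1}$, and is invisible to any statement intrinsic to $F_{\ell-1}$.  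Relatedly, your claim that $\LinSpan(F_{\ell-1})=H_0$ is false: the vertex $F_0=O_{F_0}$ lies in $F_{\ell-1}$ but not in $H_0$ (otherwise $r_0$ would fix the whole base flag), so in fact $\LinSpan(F_{\ell-1})=V$.

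The repair is to match the peeling to the recursion.  Either (i) keep your $r_0$ argument for $O_{F_0}-O_{F_1}$ and induct on the \emph{vertex figure} at $F_0$ rather than on $F_{\ell-1}$; or (ii) keep the induction on $F_{\ell-1}$ but instead peel off $O_{F_{\ell-1}}-O_{F_\ell}=O_{F_{\ell-1}}$ at the top, showing it is orthogonal to the direction space $F_{\ell-1}-O_{F_{\ell-1}}$ (which contains all the other differences).  For (ii) one does not need an averaging identity: the symmetry group $W_{\{r_0,\dots,r_{\ell-2}\}}$ of $F_{\ell-1}$ fixes $O_{F_{\ell-1}}$ and acts irreducibly on that direction space, so the orthogonal projection of $O_{F_{\ell-1}}$ onto it is a fixed vector, hence zero.
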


By taking partial sums in~\eqref{Eq:Ortho}, we obtain the following

\begin{lemma}\label{orthoscheme_basis}
Let $F_0\subset F_1\subset\cdots\subset F_{\ell-1}$ be a maximal flag of a regular polytope $\P$.  
Then the centroids $O_{F_0},O_{F_1},\ldots, O_{F_{\ell-1}}$ form a basis for $V=\mathbb C^\ell$.
\end{lemma}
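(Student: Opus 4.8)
The plan is to obtain this as an immediate corollary of Proposition~\ref{Prop:Orthoscheme} via a unitriangular change of basis. First I would adjoin the trivial face $F_\ell=V$ to the given maximal flag and write $O:=O_{F_\ell}$ for the centroid of $\P$. Proposition~\ref{Prop:Orthoscheme} then says that the $\ell$ vectors
\[
O_{F_{\ell-1}}-O,\quad O_{F_{\ell-2}}-O_{F_{\ell-1}},\quad\ldots,\quad O_{F_0}-O_{F_1}
\]
form an orthogonal basis of $V$; in particular they are $\mathbb C$-linearly independent.

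Next I would pass to partial sums. Adding up the first $m+1$ of the vectors displayed above telescopes to $O_{F_{\ell-1-m}}-O$, so letting $m$ run over $0,1,\dots,\ell-1$ produces exactly the vectors $O_{F_0}-O,\ O_{F_1}-O,\ \dots,\ O_{F_{\ell-1}}-O$. Since forming these partial sums amounts to applying an invertible (unitriangular) linear transformation to the orthogonal basis above, the set $\{\,O_{F_k}-O : 0\le k\le\ell-1\,\}$ is again a basis of $V$.

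It therefore remains only to check that $O=0$. The centroid $O_{F_\ell}$ of $\P$ is the average of the full vertex set of $\P$, and every $g\in W$ permutes that vertex set, so $gO=O$ for all $g\in W$. Because $W\subset\GL(V)$ acts irreducibly and by linear maps, its only fixed vector is the origin; hence $O=0$. Substituting $O=0$ into the previous step gives that $O_{F_0},O_{F_1},\dots,O_{F_{\ell-1}}$ form a basis of $V=\mathbb C^\ell$, as desired.

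I do not anticipate a genuine obstacle: the argument is a telescoping sum followed by a remark about $W$-invariance. The only point that is not purely formal is the identification $O_{F_\ell}=0$, which uses the standing irreducibility hypothesis on $W$ rather than anything special to Shephard groups, and which is why the center of $\P$ may be taken to be the origin throughout.
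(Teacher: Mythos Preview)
Your proof is correct and matches the paper's own argument: the paper simply says ``by taking partial sums in~\eqref{Eq:Ortho},'' which is exactly your unitriangular change of basis. You have additionally spelled out the one point the paper leaves implicit, namely that $O_{F_\ell}=0$ because the centroid of $\P$ is $W$-fixed and $W$ acts irreducibly.
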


Two particularly important results follow from Lemma~\ref{orthoscheme_basis}.

\begin{corollary}\label{OrthoSpan}
$F_i=\AffSpan( O_{F_0},\ldots,O_{F_i})$ for all $i\geq 0$.
\end{corollary}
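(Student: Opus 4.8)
The plan is to obtain the inclusion $\AffSpan(O_{F_0},\dots,O_{F_i})\subseteq F_i$ essentially for free and then force equality by a dimension count supplied by Lemma~\ref{orthoscheme_basis}.

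First I would check the inclusion. Each centroid $O_{F_j}$ lies on $F_j$: by definition it is the average of the vertices of $F_j$, each of which lies on the affine subspace $F_j$, and an affine combination of points of $F_j$ again lies in $F_j$. Since the flag is ascending, $F_j\subseteq F_i$ for $j\le i$, so $O_{F_0},\dots,O_{F_i}\in F_i$, and because $F_i$ is an affine subspace this gives $\AffSpan(O_{F_0},\dots,O_{F_i})\subseteq F_i$. For a face $F_i$ that is not presented as part of a named maximal flag, I would first invoke polytope properties (i) and (iii) to extend $F_0\subset\dots\subset F_i$ to a maximal flag, so that Lemma~\ref{orthoscheme_basis} applies.

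For the reverse inclusion I would compare dimensions. By Lemma~\ref{orthoscheme_basis}, the full list $O_{F_0},\dots,O_{F_{\ell-1}}$ is a linear basis of $V$, so its initial segment $O_{F_0},\dots,O_{F_i}$ is linearly independent, hence affinely independent, and therefore $\AffSpan(O_{F_0},\dots,O_{F_i})$ has dimension exactly $i$. Equivalently, one can read this off Proposition~\ref{Prop:Orthoscheme} directly: the consecutive differences $O_{F_j}-O_{F_{j+1}}$ for $0\le j\le i-1$ are $i$ of the vectors in an orthogonal basis of $V$, and a triangular (telescoping) change of basis shows they span the same subspace as $O_{F_j}-O_{F_0}$, $1\le j\le i$, which is precisely affine independence of $O_{F_0},\dots,O_{F_i}$. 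Since $F_i$ is itself an $i$-dimensional affine subspace containing this $i$-dimensional affine subspace, the two coincide.

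I do not expect a genuine obstacle; the only point needing a little care is the passage from ``$O_{F_0},\dots,O_{F_{\ell-1}}$ is a basis of $V$'' to ``$O_{F_0},\dots,O_{F_i}$ is affinely independent,'' which I would dispatch either by recalling that linear independence implies affine independence or, more transparently, by arguing straight from the orthoscheme of Proposition~\ref{Prop:Orthoscheme}.
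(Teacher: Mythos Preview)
Your argument is correct and is exactly the routine dimension count the paper has in mind: the corollary is stated without proof as an immediate consequence of Lemma~\ref{orthoscheme_basis}, and your inclusion-plus-dimension argument is the intended one-line justification. The extra remark about extending to a maximal flag is unnecessary here, since the corollary is stated in the same context as Proposition~\ref{Prop:Orthoscheme} and Lemma~\ref{orthoscheme_basis}, where a maximal flag is already fixed.
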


\begin{corollary}\label{FlagIntersection}
If $\calF_1,\calF_2$ are two subflags of a fixed flag $\calF$, then 
\begin{align*}
\LinSpan( \{O_{F}\}_{F\in\calF_1})\cap\LinSpan( \{O_F\}_{F\in\calF_2})
&=\LinSpan( \{O_F\}_{F\in\calF_1\cap\calF_2})
\intertext{and}
\AffSpan( \{O_{F}\}_{F\in\calF_1})\cap\AffSpan( \{O_F\}_{F\in\calF_2})
&=\AffSpan( \{O_F\}_{F\in\calF_1\cap\calF_2}).
\end{align*}
\end{corollary}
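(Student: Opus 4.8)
The plan is to deduce Corollary~\ref{FlagIntersection} directly from Lemma~\ref{orthoscheme_basis} by reducing the statement about spans of centroids to a statement about spans of coordinate vectors. First I would fix a maximal flag $\calF'=F_{j_0}\subset F_{j_1}\subset\cdots\subset F_{j_{\ell-1}}$ containing the given flag $\calF$ (such an extension exists by the polytope axioms, as noted in Section~\ref{Background}). By Lemma~\ref{orthoscheme_basis}, the centroids $O_{F_{j_0}},\ldots,O_{F_{j_{\ell-1}}}$ form a basis of $V=\mathbb C^\ell$; in the coordinate system afforded by this basis, each $O_F$ for $F\in\calF'$ becomes a distinct standard basis vector $e_m$. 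So without loss of generality the set $\{O_F\}_{F\in\calF}$ is a subset $\{e_m : m\in S\}$ of the standard basis indexed by some $S\subseteq\{0,1,\ldots,\ell-1\}$, and the two subflags $\calF_1,\calF_2$ correspond to subsets $S_1,S_2\subseteq S$, with $\calF_1\cap\calF_2$ corresponding to $S_1\cap S_2$.

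With this reduction in place, the linear statement is the elementary fact that for subsets $S_1,S_2$ of an index set, $\LinSpan(\{e_m : m\in S_1\})\cap\LinSpan(\{e_m : m\in S_2\}) = \LinSpan(\{e_m : m\in S_1\cap S_2\})$; the inclusion $\supseteq$ is trivial, and $\subseteq$ follows by writing a vector in the intersection in coordinates and observing that its support must lie in both $S_1$ and $S_2$. For the affine version I would argue analogously: a point in $\AffSpan(\{e_m : m\in S_i\})$ has coordinates summing to $1$ and supported on $S_i$, so a point in both affine spans is supported on $S_1\cap S_2$ with coordinate sum $1$, hence lies in $\AffSpan(\{e_m : m\in S_1\cap S_2\})$ (using that $S_1\cap S_2$ is nonempty, which holds because both $\calF_1$ and $\calF_2$ are nonempty subflags of the single flag $\calF$ — or, if one allows empty subflags, interpreting the affine span of the empty set as empty makes the identity hold vacuously). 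Again the reverse inclusion is immediate.

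I do not expect a serious obstacle here; the only point requiring a little care is the passage from "centroids of faces in a flag" to "a linearly independent family that can be completed to a basis," which is exactly what Lemma~\ref{orthoscheme_basis} (via Proposition~\ref{Prop:Orthoscheme}) supplies, so the corollary is genuinely a routine consequence. The mildest subtlety is bookkeeping around whether subflags are permitted to be empty and what convention to adopt for $\AffSpan(\varnothing)$; I would simply state that all flags and subflags under consideration are nonempty, matching the conventions of Section~\ref{Background}, which sidesteps the issue entirely. Corollary~\ref{OrthoSpan} is not needed for this argument, though it offers an alternative route: one could phrase the affine identity in terms of intersections of the faces $F_i$ themselves once one knows $F_i=\AffSpan(O_{F_0},\ldots,O_{F_i})$, but the coordinate argument above is the cleanest.
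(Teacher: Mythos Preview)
Your approach is correct and matches the paper's: the paper states this as an immediate corollary of Lemma~\ref{orthoscheme_basis} with no further argument, and your reduction to coordinate subspaces via that lemma is exactly the intended justification. One small slip: the parenthetical claim that $S_1\cap S_2$ is nonempty ``because both $\calF_1$ and $\calF_2$ are nonempty subflags of the single flag $\calF$'' is false in general (e.g.\ $\calF_1=\{F_0\}$, $\calF_2=\{F_1\}$), but your fallback---that when $S_1\cap S_2=\varnothing$ both sides of the affine identity are empty---is the correct way to handle that case, so the argument stands.
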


We are now ready to present the main tool that will be used in the proof 
of Theorem~\ref{S:Thm}\eqref{S:locallyconical}:

\begin{proposition}\label{unique}
Let $X\in \L_W$ of dimension $k>0$, and suppose that  
$\calF=F_{i_1}\subset \cdots\subset F_{i_k}$ and 
$\calF'=F'_{j_1}\subset\cdots\subset F'_{j_k}$ are two $k$-flags with 
\[
X=\LinSpan( O_{F_{i_1}},\ldots, O_{F_{i_k}})=\LinSpan( O_{F'_{j_1}},\ldots, O_{F'_{j_k}}).
\]
Set $i_{k+1}=\ell$.  For $1\leq s\leq k$, if $F$ is a face such that 
$F_{i_s}\subseteq F\subsetneq F_{i_{s+1}}$,
then one of the following holds:
\begin{enumerate}[(a)]
\item  $F_{j_s}'=F_{i_s}$. 
\item  $F'_{j_t}\not\subseteq F$ for all  $t$.
\end{enumerate}
 \end{proposition}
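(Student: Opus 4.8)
The plan is to reduce the statement to a computation inside the ``local'' subspace $L_F:=\LinSpan(\{O_G : G\text{ a face of }\mathscr P,\ G\subseteq F\})$, with Corollary~\ref{FlagIntersection} and Corollary~\ref{OrthoSpan} doing the work.

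First I would record that $F$ is automatically comparable to every face of $\calF$: the faces $F_{i_1}\subseteq\cdots\subseteq F_{i_s}$ all lie in $F$, while $F_{i_{s+1}}\subseteq\cdots\subseteq F_{i_k}$ all strictly contain $F$ because $F\subsetneq F_{i_{s+1}}$. Hence there is a maximal flag $\mathcal{G}$ of $\mathscr P$ containing both $\calF$ and $F$. Inside $\mathcal{G}$, the sequence $\calF$ and the subflag $\mathcal{G}_{\subseteq F}:=\{G\in\mathcal{G} : G\subseteq F\}$ are both subflags, and Corollary~\ref{OrthoSpan} gives $L_F=\LinSpan(\{O_G : G\in\mathcal{G}_{\subseteq F}\})$. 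Feeding these two subflags of $\mathcal{G}$ into Corollary~\ref{FlagIntersection} yields
\[
X\cap L_F \;=\; \LinSpan(\{O_G : G\in\calF\cap\mathcal{G}_{\subseteq F}\}) \;=\; \LinSpan(O_{F_{i_1}},\ldots,O_{F_{i_s}}),
\]
a flat of dimension exactly $s$, since the centroids of any flag are linearly independent (extend to a maximal flag and apply Lemma~\ref{orthoscheme_basis}).

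Next I would assume we are not in case (b) and set $r:=\max\{t : F'_{j_t}\subseteq F\}\geq 1$. Then $O_{F'_{j_1}},\ldots,O_{F'_{j_r}}$ lie in $X\cap L_F$ and are linearly independent, so $r\leq s$; moreover, running the argument above with $\calF'$ in place of $\calF$ and with the face $F'_{j_r}$ (which is comparable to all of $\calF'$ since it belongs to $\calF'$) gives $\LinSpan(O_{F'_{j_1}},\ldots,O_{F'_{j_r}})=X\cap L_{F'_{j_r}}\subseteq X\cap L_F$. It then remains to promote $r\leq s$ to $r=s$ and to identify $F'_{j_s}=F_{i_s}$. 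I would do this by induction on $k=\dim X$: once $r=s$ is known, the $s$-flags $F_{i_1}\subset\cdots\subset F_{i_s}$ and $F'_{j_1}\subset\cdots\subset F'_{j_s}$ both span the flat $X\cap L_F$ of dimension $s\leq k$, and the inductive hypothesis applied to this flat (with $F$ replaced by $F_{i_s}$) forces $F'_{j_s}=F_{i_s}$; the base case $k=1$ is that a complex line through the origin meets a proper face of $\mathscr P$ — an affine subspace missing the origin — in at most one point, so the two centroids, hence (distinct faces of $\mathscr P$ having distinct centroids) the two faces, coincide.

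The main obstacle is exactly the passage from $r\leq s$ to $r=s$, i.e.\ showing that $\calF'$ cannot have strictly fewer faces inside $F$ than $\calF$ does. The subtlety is that $F$ need not be comparable to $F'_{j_{r+1}},\ldots,F'_{j_k}$, so Corollary~\ref{FlagIntersection} is not directly available for the pair $(\calF',F)$. My plan there is to extend the truncated flag $F'_{j_1}\subset\cdots\subset F'_{j_r}\subseteq F$ to a maximal flag and use its orthoscheme basis from Proposition~\ref{Prop:Orthoscheme} together with Corollary~\ref{OrthoSpan} in a dimension count: every vector of $X$ lying in $L_F$ should turn out to be a linear combination of $O_{F'_{j_1}},\ldots,O_{F'_{j_r}}$, whence $\dim(X\cap L_F)=r$ and therefore $r=s$. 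This is the step where the orthoscheme structure (hence the non-starry hypothesis) is genuinely used and where the bookkeeping will be heaviest.
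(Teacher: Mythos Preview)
Your diagnosis is exactly right: the passage from $r\le s$ to $r=s$ is the whole difficulty, and your proposed fix does not work. Extending $F'_{j_1}\subset\cdots\subset F'_{j_r}\subseteq F$ to a maximal flag gives a basis in which $L_F$ is transparent, but $X=\LinSpan(O_{F'_{j_1}},\ldots,O_{F'_{j_k}})$ is \emph{not} expressed in that basis (the faces $F'_{j_{r+1}},\ldots,F'_{j_k}$ need not belong to the new flag), so no dimension count is available. In fact the step is unprovable in the stated generality: for the $3$-cube take $\calF=(1,1,1)\subset\{x_1=x_2=1\}\subset\{x_1=1\}$ and $\calF'=(1,1,-1)\subset\{x_1=1,x_3=-1\}\subset\{x_3=-1\}$, so $X=V$; with $s=2$ and $F=\{x_1=x_2=1\}$ one has $r=1<2=s$ and neither (a) nor (b) holds. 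What the paper's ``Similarly'' actually uses is that $F$ is comparable to every member of $\calF'$: then $\calF'\cup\{F\}$ extends to a single maximal flag and Corollaries~\ref{OrthoSpan} and~\ref{FlagIntersection} apply to $\calF'$ verbatim, giving $F\cap X=\AffSpan(O_{F'_{j_1}},\ldots,O_{F'_{j_t}})$ and hence $t=s$. This comparability holds in the paper's only use of the proposition, where $F\in\calB_U$ and $\calF'\in\St_{K(\P)}(\calB_U)$.

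Even granting $r=s$, your inductive endgame differs from the paper's and has its own gap: invoking the proposition on the smaller flat $X\cap L_F$ with new face $F_{i_s}$ needs some $F'_{j_t}\subseteq F_{i_s}$ (otherwise you land in alternative (b) and learn nothing), which you have not established; and when $s=k$ there is no reduction at all. The paper instead argues directly once $t=s$: it shows that $F_{i_s}$ and $F'_{j_s}$ are each minimal faces of $\P$ containing the affine set $F\cap X$ (each contains it and has its own centroid inside it), and then an orthoscheme length computation combined with the regularity of $\P$ forces $F'_{j_s}=F_{i_s}$.
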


\begin{proof}
Suppose that $F'_{j_t},F_{i_s}\subseteq F\subsetneq F_{i_{s+1}}$ with $s$ and $t$ maximal.  
Using the definition of a polytope, we can extend 
$F_{i_1}\subset\cdots\subset F_{i_s}\subseteq F\subseteq \cdots\subseteq F_{i_k}$ 
to a maximal flag $\calF$.  By doing so, 
Corollaries~\ref{OrthoSpan} and~\ref{FlagIntersection} imply that
\begin{equation}\label{1}
F\cap X=\AffSpan( O_{F_{i_1}},\ldots, O_{F_{i_s}}).
\end{equation}  
Similarly,
\begin{equation}\label{2}
F\cap X=\AffSpan( O_{F'_{j_1}},\ldots, O_{F'_{j_t}}).
\end{equation}
By comparing dimension, we have that $t=s$.

Our first claim is that $F_{i_s},F'_{j_s}$ are minimal faces 
of $F$ containing $F\cap X$.  Certainly the intersection is 
contained in $F_{i_s}$ (and $F'_{j_s}$).  Moreover, 
it contains the centroid $O_{F_{i_s}}$ (resp. $O_{F'_{j_s}}$), which cannot be contained 
in any proper face of $F_{i_s}$ (resp. $F'_{j_s}$) by Corollary~\ref{OrthoSpan}.

Assume without loss of generality that 
$i_s\leq j_s$.  From the equalities of \eqref{1} and \eqref{2}, we have 
$O_{F'_{j_s}}\in F_{i_s}$.  The definition of a polytope implies the existence 
of a face $\tilde{F}_{j_s}\supseteq F_{i_s}$, which must therefore necessarily contain 
both $O_{\tilde{F}_{j_s}}$ and $O_{F'_{j_s}}$.  We claim that $F'_{j_s}=\tilde{F}_{j_s}$.  This follows 
immediately if $O_{F'_{j_s}}=O_{\tilde{F}_{j_s}}$, since faces and centroids determine 
each other; see Theorem~\ref{barycentric}(3)(b), for example.  The other case is 
slightly more work.

Suppose that $O_{F'_{j_s}}\neq O_{\tilde{F}_{j_s}}$.  Extend $\tilde{F}_{j_s}$ to a maximal flag 
$\tilde{\mathcal F}$ and recall that the centroids for $\tilde{\mathcal F}$ form an orthoscheme.  
Combining Corollary~\ref{OrthoSpan} with Proposition~\ref{Prop:Orthoscheme}, 
it follows that the vector of $O_{\tilde{F}_{j_s}}$ is perpendicular to $\tilde{F}_{j_s}$.  As $O_{F'_{j_s}}$ is also in $\tilde{F}_{j_s}$, this implies 
that $O_{F_{j_s}}-O_{\tilde{F}_{j_s}}$ 
is perpendicular to $O_{\tilde{F}_{j_s}}$.  Hence,
\begin{align*}
|O_{F'_{j_s}}|^2&=|O_{F'_{j_s}}-O_{\tilde{F}_{j_s}}+O_{\tilde{F}_{j_s}}|^2\\
&=\langle O_{F'_{j_s}}-O_{\tilde{F}_{j_s}}+O_{\tilde{F}_{j_s}},O_{F'_{j_s}}-O_{\tilde{F}_{j_s}}+O_{\tilde{F}_{j_s}}\rangle\\
&=|O_{F'_{j_s}}-O_{\tilde{F}_{j_s}}|^2+|O_{\tilde{F}_{j_s}}|^2.
\end{align*}
Because $O_{F'_{j_s}}\neq O_{\tilde{F}_{j_s}}$, we therefore have
\begin{equation*}
|O_{F'_{j_s}}|>|O_{\tilde{F}_{j_s}}|.
\end{equation*}
This contradicts regularity, since there is a unitary $g\in W$ with 
$g\tilde{F}_{j_s}=F'_{j_s}$, i.e.,  with $gO_{\tilde{F}_{j_s}}=O_{F'_{j_s}}$.

Having established that $F'_{j_s}=\tilde{F_{j_s}}\supseteq F_{i_s}$, the minimality 
of $F'_{j_s}$ forces equality.
\end{proof}

\begin{proof}[Proof of Theorem~\ref{S:Thm}\eqref{S:locallyconical}]
Let $U\subseteq R$ be nonempty.  Identifying $\Delta$ with $K(\P)$, we have 
$W_{R\diff U}$ corresponds to $\mathcal B_U$, and 
\[\Delta^U=\St_{\Delta}(W_{R\diff U})=\St_{K(\P)}(\mathcal B_U),\]
with $\Supp:\Face(\Delta^U) \longrightarrow \Pi^U\diff\{\hat{1}\}$ given by
$F_{i_1}\subset\cdots \subset F_{i_s}\longmapsto \LinSpan( O_{F_{i_1}},\ldots, O_{F_{i_s}})$.

Let $X\in \pPi\diff\{\hat{1}\}$.  
We claim that for some face $F_t$ of $\P$, the Quillen fiber 
\[\Phi=X\cap \Delta^U\] 
has $F_t$ as a cone point.
This implies the desired claim of $(W,R,\Lambda)$ being locally conical.

From the definition of $\Pi^U$ and 
Theorem~\ref{barycentric}\eqref{realization}, 
we can write 
\[X=\LinSpan( O_{F_{t_1}},\ldots,O_{F_{t_k}})\]
for some $k$-flag $\calF=F_{t_1}\subset\cdots\subset F_{t_k}$ in $\Delta^U$.  
Because $\calF$ can be extended to a flag in $\Delta$ containing $\mathcal B_U$, 
we can fix a nontrivial face $F\in\mathcal B_U$ and choose $m$ such that one of the following 
holds:

\begin{enumerate}[(a)]
\item  $F_{t_m}$ is the maximal face of $\calF$ that is weakly contained in $F$, or
\item  $F_{t_m}$ is the minimal face of $\calF$ that weakly contains $F$.
\end{enumerate}

We claim that $F_{t_m}$ is a cone point of $\Phi$.  To establish this, 
suppose $\mathcal F'$ is another $k$-flag in $\Delta^U$ with support $X$.  Using 
Lemma~\ref{unique}, we will show that 
$F_{t_m}\in \mathcal F'.$

Suppose first that we are in case (a), i.e.,   
$F_{t_m}\subseteq F$.  
Because $\calF'$ can be extended to a maximal flag containing $\mathcal B_U$, either
\begin{enumerate}[(1)]
\item  an element of $\calF'$ is weakly contained in $F$, or
\item  each element of $\calF'$ strictly contains $F$.
\end{enumerate}
However, (2) is not a valid possibility.  Supposing otherwise, we can extend 
$\calF'$ by $F_{t_m}$ to 
obtain a strictly larger flag with support equal to $X$.  Considering dimension yields 
a contradiction.  
In the case of (1), Lemma~\ref{unique} shows that $F_{t_m}\in \calF'$.  

If we are instead in case (b), i.e., $F_{t_m}\supseteq F$, we can use the same argument, applied to 
the dual polytope $\P^*$, to conclude that
$F_{t_m}\in\calF'$. 
\end{proof}

\section{Additional properties of ribbon representations}
The aim of this section is to relate ribbon representations of Shephard groups to 
the group algebra, the exterior powers of the reflection representation, and to the coinvariant algebra.  

Recall that the group algebra $\mathbb C[G]$ of a finite group $G$ over $\mathbb C$ is semisimple, and thus 
the Grothendieck group $R(G)$ of the category of finite dimensional $G$-representations is the free $\mathbb Z$-module with basis the 
isomorphism classes of irreducible $G$-modules.  The map sending 
each irreducible module to its character linearly extends to an isomorphism of $\mathbb C\otimes_{\mathbb Z}R(G)$ and the 
space of complex class functions on $G$.  
In fact, this is an isomorphism in the category of finite-dimensional Hilbert spaces, 
where the form $\langle -,-\rangle$ on $\mathbb C\otimes_{\mathbb Z}R(G)$ is that for which the irreducible $G$-modules form an orthonormal basis, and 
the form $\langle-,-\rangle$ on class functions is given by $\langle \varphi,\psi\rangle=\frac{1}{|G|}\sum_{g\in G}\varphi(g)\overline{\psi(g)}$.  
In what follows, we make no notational distinction between elements of $\mathbb C\otimes_{\mathbb Z}R(G)$ and class functions.

\subsection{Solomon's group algebra decomposition}
Let $(W,R)$ be a simple system, and $\epsilon$ be the sign representation, defined by $\epsilon(r)=-1$ for $r\in R$.  
In~\cite{Solomon:Decomp}, Solomon showed that
\begin{equation}\label{SolomonDecomp}\mathbb C W=\bigoplus_{T\subseteq R} \mathbb C Wc_T\quad\text{and}\quad
\mathbb C W c_T\cong_W\sum_{T\subseteq J\subseteq R}(-1)^{|J\diff T|}\Ind_{W_J}^W \triv
\end{equation}
for $c_T:=a_T b_{R\diff T}$, where
\[a_T=\frac{1}{|W_T|}\sum_{g\in W_T} g\quad\text{and}\quad b_{R\diff T}=\frac{1}{|W_{R\diff T}|}\sum_{g\in W_{R\diff T}} \epsilon(g) g.\]  

The $W$-modules $\mathbb C W c_T$ are known as 
\emph{ribbon representations} due to their alternate description when $W=\mathfrak S_n$ and $r_i=(i,i+1)$ for $1\leq i\leq n-1$.  
Considering this case, let $T\subseteq R$, write 
$R\diff T=\{r_{i_1},\ldots, r_{i_j}\}$ with $i_1<\cdots< i_j$, and let $\lambda/\mu$ be the ribbon skew shape corresponding to composition $(i_1,i_2-i_1,\ldots, i_j-i_{j-1},n-i_j)$; 
see Figure~\ref{Tableau:Fig}.  
Filling $\lambda/\mu$ with $1,\ldots, n$ in increasing order from southwest to northeast produces a tableau 
whose rows are stabilized by $W_T$ and whose columns are stabilized by $W_{R\diff T}$.  Thus, 
$c_T$ is the \emph{Young symmetrizer} $c_{\lambda/\mu}$, and hence $\mathbb C W c_T$ is the 
$\mathfrak S_n$-Specht module of ribbon skew shape $\lambda/\mu$.
\begin{figure}[hbt]
\begin{center}
\includegraphics{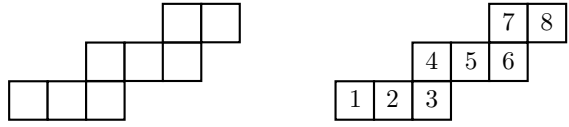}
\caption{The ribbon skew shape and filling for composition $(3,3,2)$.}
\label{Tableau:Fig}
\end{center}
\end{figure}  

For a simple system or Shephard system $(W,R)$, define $\chi^T=\tilde{H}_{|T|-1}(\Delta_T)$ for $T\subseteq R$.  
We call these \emph{ribbon representations}, noting that in the case of Coxeter groups 
\[\mathbb C c_T\cong _W \sum_{T\subseteq J\subseteq R}(-1)^{|J\diff T|}\Ind_{W_J}^W \triv =\sum_{J\subseteq R\diff T} (-1)^{|(R\diff T)\diff J|}\Ind_{W_{R\diff J}}^W \triv \cong_W \chi^{R\diff T}.\]
Regarding Solomon's group algebra decomposition, applying M\"obius inversion to the equality $\chi^R=\sum_{T\subseteq R} (-1)^{|R\diff T|}\Ind_{W_{R\diff T}}^W \triv$ yields

\begin{theorem}\label{CS:Decomp}
For $(W,R)$ a simple system or Shephard system, $\mathbb C W\cong_W\bigoplus_{T\subseteq R} \chi^T$.
\end{theorem}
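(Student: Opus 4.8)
The plan is to deduce Theorem~\ref{CS:Decomp} from the two ingredients already in hand: the formula for $\chi^T$ in terms of induced trivial modules, and the classical fact that $\mathbb C W$ decomposes as the sum over all $T\subseteq R$ of the one-dimensional terms indexed by $T$ coming from Solomon's idempotents---but reorganized so that the final statement is an identity in the representation ring $R(W)$. Concretely, I would start from the definition $\chi^T=\tilde H_{|T|-1}(\Delta_T)$. For a simple system or Shephard system, Theorem~\ref{Homology} (with $U=\varnothing$, so that $\Delta_T^\varnothing=\Delta_T$ and $W_{R\diff U}=W$) gives
\[
\chi^T\ \cong_W\ \sum_{J\subseteq T}(-1)^{|T\diff J|}\,\Ind_{W_{R\diff J}}^{W}\triv
\]
as virtual $W$-modules. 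The point is that the map $(W,R)\mapsto\bigoplus_T\chi^T$ should collapse to a sum of plain induced-trivials which, by the orthogonality/counting identity, is exactly $\mathbb C W$.

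First I would sum the displayed formula over all $T\subseteq R$:
\[
\bigoplus_{T\subseteq R}\chi^T\ \cong_W\ \sum_{T\subseteq R}\ \sum_{J\subseteq T}(-1)^{|T\diff J|}\,\Ind_{W_{R\diff J}}^{W}\triv .
\]
Interchanging the order of summation (fixing $J$ and letting $T$ range over subsets of $R$ containing $J$), the inner sum becomes
\[
\sum_{J\subseteq R}\Ind_{W_{R\diff J}}^{W}\triv\cdot\!\!\sum_{J\subseteq T\subseteq R}(-1)^{|T\diff J|}
=\sum_{J\subseteq R}\Ind_{W_{R\diff J}}^{W}\triv\cdot[\,R\diff J=\varnothing\,],
\]
since $\sum_{T\colon J\subseteq T\subseteq R}(-1)^{|T\diff J|}=(1-1)^{|R\diff J|}$, which vanishes unless $J=R$. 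Hence the only surviving term is $J=R$, giving $\Ind_{W_{\varnothing}}^W\triv=\mathbb C W$. Thus $\bigoplus_{T\subseteq R}\chi^T\cong_W\mathbb C W$ as virtual modules, and since $\mathbb C W$ is a genuine module and each $\chi^T$ is a genuine module (being a homology group, concentrated in the top degree by Cohen--Macaulayness of $\Delta$, which holds by Theorem~\ref{C:Thm}\eqref{C:C-M} or Theorem~\ref{S:Thm}\eqref{S:C-M}), the virtual identity is an honest isomorphism.

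The one subtlety---and the step I expect to require the most care---is justifying that each $\chi^T$ is a genuine $W$-module rather than merely a virtual one, so that the equality in $R(W)$ upgrades to an isomorphism of actual representations. This is where Cohen--Macaulayness of $\Delta=\Delta(W,R)$ enters: by Proposition~\ref{CM:Typed} the type-selected subcomplex $\Delta_T$ inherits Cohen--Macaulayness, so $\tilde H_i(\Delta_T)=0$ for $i<|T|-1$ and all reduced homology is concentrated in the top degree $|T|-1$; hence $\chi^T=\tilde H_{|T|-1}(\Delta_T)$ is a genuine module and the alternating sum in Theorem~\ref{Homology} computes precisely this top homology. Once this is in place, the remark in the excerpt (immediately preceding the statement) that M\"obius inversion converts $\chi^R=\sum_{T\subseteq R}(-1)^{|R\diff T|}\Ind_{W_{R\diff T}}^W\triv$ into the displayed formula for each $\chi^T$ is just the $U=\varnothing$ case of Theorem~\ref{Homology}, and the telescoping computation above finishes the proof. (In the Coxeter case one may alternatively cite Solomon's decomposition~\eqref{SolomonDecomp} directly together with the identification $\mathbb C Wc_T\cong_W\chi^{R\diff T}$ recorded just above the theorem; but the homology argument has the advantage of covering the Shephard case uniformly.)
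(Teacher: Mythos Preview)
Your proposal is correct and takes essentially the same approach as the paper: the paper's one-line proof (``applying M\"obius inversion to $\chi^R=\sum_{T\subseteq R}(-1)^{|R\setminus T|}\Ind_{W_{R\setminus T}}^W\triv$'') is exactly your interchange-of-sums telescoping, just stated tersely, and your invocation of Theorem~\ref{Homology} with $U=\varnothing$ is the correct source of the underlying family of identities. Your explicit justification that each $\chi^T$ is a genuine module via Cohen--Macaulayness (Proposition~\ref{CM:Typed} together with Theorems~\ref{C:Thm}\eqref{C:C-M} and~\ref{S:Thm}\eqref{S:C-M}) fills in a point the paper leaves implicit.
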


Another extension of a main theorem in~\cite{Solomon:Decomp} is

\begin{theorem}\label{Theorem:Steinberg}
Let $(W,R)$ be a simple system or Shephard system.  For $T\subseteq R$, the ribbon representation 
$\chi^T$ contains a unique irreducible submodule isomorphic to $\bigwedge^{|T|}V$ and has 
no submodule isomorphic to $\bigwedge^p V$ for $p\neq |T|$.
\end{theorem}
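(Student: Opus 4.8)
The plan is to reduce the statement to a character-multiplicity computation using Theorems~\ref{CS:Decomp}, \ref{Homology}, and the known decomposition of the exterior powers $\bigwedge^p V$ as $W$-modules. First I would recall that for a simple system or Shephard system $(W,R)$, the reflection representation $V$ has a canonical grading in the coinvariant algebra whose exterior powers $\bigwedge^p V$ appear with controlled multiplicities; in particular, Solomon's work (and its Shephard-group analogue in~\cite{OrlikReinerShepler,Solomon:Decomp}) shows that $\bigwedge^p V$ occurs in $\mathbb C W$ with multiplicity equal to its dimension $\binom{\ell}{p}$, and that the ``fake degree''/exterior-power data pins down exactly which graded pieces carry each $\bigwedge^p V$. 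The key structural input is the group-algebra decomposition $\mathbb C W\cong_W\bigoplus_{T\subseteq R}\chi^T$ from Theorem~\ref{CS:Decomp}, which is $2^\ell$ summands; I would match these against the $\sum_p \binom{\ell}{p}=2^\ell$ copies of exterior powers by showing each $\chi^T$ contributes exactly one $\bigwedge^{|T|}V$.

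The main step is a symmetry/counting argument. For each $p$ there are exactly $\binom{\ell}{p}$ subsets $T\subseteq R$ with $|T|=p$, and exactly $\binom{\ell}{p}$ copies of $\bigwedge^p V$ inside $\mathbb C W$. So it suffices to prove the single inequality: $\langle \chi^T,\bigwedge^{|T|}V\rangle\geq 1$ for every $T$, together with $\langle \chi^T,\bigwedge^{p}V\rangle=0$ whenever $p>|T|$ (the latter being easy, since $\chi^T=\tilde H_{|T|-1}(\Delta_T)$ lives in a complex of dimension $|T|-1$, so by the virtual-module formula~\eqref{Modules} — taking $U=\varnothing$, $\Delta_T^\varnothing=\Delta_T$ — and Frobenius reciprocity one bounds the possible exterior-power constituents by degree considerations in the coinvariant algebra). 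Once those two facts are in hand, a global count forces every inequality to be an equality and forces each $\chi^T$ to contain exactly one $\bigwedge^{|T|}V$ and nothing of the form $\bigwedge^p V$ with $p<|T|$ either: indeed the total multiplicity of $\bigwedge^p V$ across all $\chi^T$ with $|T|=p$ is already $\binom{\ell}{p}$, leaving no room for $\bigwedge^p V$ to appear in any $\chi^T$ with $|T|>p$.

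To get the crucial lower bound $\langle\chi^T,\bigwedge^{|T|}V\rangle\geq 1$, I would use the explicit top-homology description. By~\eqref{Modules} with $U=\varnothing$,
\[
\chi^T=\tilde H_{|T|-1}(\Delta_T)\cong\sum_{J\subseteq T}(-1)^{|T\setminus J|}\Ind_{W_{R\setminus J}}^{W}\triv
\]
as virtual modules, and by M\"obius inversion this is Solomon's $\mathbb C W c_{R\setminus T}$ (up to the sign-twist identification recorded just before Theorem~\ref{CS:Decomp}). Solomon proved for Coxeter groups, and Orlik--Reiner--Shepler extended to Shephard groups, that $\langle \mathbb C W c_{S},\bigwedge^{\ell-|S|}V\rangle=1$; translating through $S=R\setminus T$ gives exactly $\langle\chi^T,\bigwedge^{|T|}V\rangle=1$. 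The cleanest route is therefore to invoke this Solomon/Orlik--Reiner--Shepler identity directly and then combine it with the dimension count above.

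\textbf{Main obstacle.} I expect the hard part to be the vanishing $\langle\chi^T,\bigwedge^pV\rangle=0$ for $p<|T|$ in a case-free way for Shephard groups: the degree bound from the dimension of $\Delta_T$ only immediately controls $p>|T|$, so ruling out smaller exterior powers requires either the graded refinement of Theorem~\ref{CS:Decomp} (tracking where each $\bigwedge^pV$ sits in the coinvariant algebra, via Orlik--Solomon's coincidence of discriminant matrices, Theorem~\ref{Theorem:OrlikSolomon} and its surrounding results) or a direct pigeonhole argument exploiting that $\sum_{|T|=p}\langle\chi^T,\bigwedge^pV\rangle=\binom{\ell}{p}=\dim_{\mathbb C}\mathrm{Hom}_W(\bigwedge^pV,\mathbb C W)$ already exhausts the available multiplicity. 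I would pursue the pigeonhole version, since it keeps the argument uniform and only uses the two displayed identities plus Theorem~\ref{CS:Decomp}.
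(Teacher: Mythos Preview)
Your counting framework is the same as the paper's: both use Theorem~\ref{CS:Decomp} together with the fact that $\bigwedge^{p}V$ occurs in $\mathbb C W$ with multiplicity $\binom{|R|}{p}$, so that it suffices to pin down $\langle\chi^T,\bigwedge^{|T|}V\rangle$ for each $T$. The divergence is in how that inner product is obtained, and this is where your proposal has a genuine gap.

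You plan to import the equality $\langle\chi^T,\bigwedge^{|T|}V\rangle=1$ from Solomon (Coxeter case) and from ``Orlik--Reiner--Shepler extended to Shephard groups.'' For Shephard groups this is circular: the cited paper~\cite{OrlikReinerShepler} treats the \emph{sign} representation $\bigwedge^{\ell}V$, not general exterior powers, and the present theorem is precisely the extension of Solomon's result you are invoking. So in the Shephard case you have no independent source for step~(a). Separately, your step~(b)---that $\langle\chi^T,\bigwedge^{p}V\rangle=0$ for $p>|T|$ is ``easy'' because $\Delta_T$ has dimension $|T|-1$---is not an argument: the dimension of a complex places no a priori restriction on which irreducibles occur in its top homology, and the vague ``degree considerations in the coinvariant algebra'' would, if made precise, amount to the very Frobenius/Steinberg computation you are trying to avoid.

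The paper sidesteps both issues by computing the exact value directly. Writing $\chi^T=\sum_{J\subseteq T}(-1)^{|T\setminus J|}\Ind_{W_{R\setminus J}}^W\triv$, it shows the Kronecker-delta identity
\[
\bigl\langle \Ind_{W_{R\setminus J}}^W\triv,\ \textstyle\bigwedge^{|T|}V\bigr\rangle=\delta_{J,T}\qquad(J\subseteq T)
\]
via Frobenius reciprocity, the decomposition $W_{R\setminus J}\cong W_{I_1}\times\cdots\times W_{I_n}$ with $V=V^{W_{R\setminus J}}\oplus V_1\oplus\cdots\oplus V_n$, and Steinberg's theorem that the $W_{I_j}$-modules $\bigwedge^{k}V_j$ are irreducible and pairwise non-isomorphic (so $\langle\triv,\bigwedge^{i_j}V_j\rangle_{W_{I_j}}=\delta_{i_j,0}$). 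This yields $\langle\chi^T,\bigwedge^{|T|}V\rangle=1$ on the nose, after which the multiplicity count finishes the proof with no need for a separate $p>|T|$ argument. That single computation is the missing idea in your proposal.
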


One can follow the same proof as in~\cite{Solomon:Decomp}, replacing~\eqref{SolomonDecomp} with Theorem~\ref{CS:Decomp}.  
However, for the reader's convenience, we supply a simple and significantly shorter argument.

\begin{proof}[Proof of Theorem~\ref{Theorem:Steinberg}]  
Since $ \Wedge^{|T|}V$ occurs in $\CC W$ with multiplicity equal to its
dimension $\binom{|R|}{|T|}$, by Theorem~\ref{CS:Decomp} it will suffice
to show $\langle \chi^T,\, \Wedge^{|T|}V\rangle =1$ for each $T\subseteq R$.  
Fixing $T\subseteq R$, so that $\chi^T=\sum_{J\subseteq T}(-1)^{|T\diff J|} \Ind_{W_{R\diff J}}^W \triv$,
it suffices to show for each subset $J\subseteq T$ that
\begin{equation}
\label{desired-Kronecker-delta}
\begin{aligned}
\left\langle \Ind_{W_{R\diff J}}^W \triv\ ,\ \Wedge^{|T|}V\right\rangle 
=
\begin{cases} 1 & \text{if}\ J=T,\\ 0 & \text{otherwise.}\end{cases}
\end{aligned}
\end{equation}
If $J=R$, then $\Ind_{W_{R\diff J}}^W \triv$ is the regular 
representation and $\dim \Wedge^{|T|}V=1$, so~\eqref{desired-Kronecker-delta} follows.  

Assuming that $J\neq R$, there are disjoint nonempty sets $I_j\subseteq R\diff J$ such that 
\[
W_{R\diff J}\cong W_{I_1}\times\cdots \times W_{I_n}
\] 
with each $W_{I_j}$ acting irreducibly on $V_j:=(\bigcap_{s\in I_j}H_s)^\perp$, thus yielding a decomposition 
\[
V=V^{W_{R\diff J}}\oplus V_1\oplus\cdots\oplus V_n
\] 
on which each $W_{I_j}$ acts trivially on all factors except $V_j$; see~\cite[Ch. 1]{Lehrer}.
The resulting decomposition of the exterior power
\[
{ \Wedge^{|T|}} V \cong \bigoplus_{i_1+\cdots+i_n+m=|T|}
{ \Wedge^{m}}(V^{W_{R\diff J}})\otimes 
{ \Wedge^{i_1}}V_1\otimes\cdots\otimes 
{ \Wedge^{i_n}}V_n,
\]
combined with Frobenius reciprocity, implies that 
\begin{align*}
\left\langle\Ind_{W_{R\diff J}}^W \triv\ ,\ { \Wedge^{|T|}} V\right\rangle 
&=
\sum_{m=0}^{|J|}\binom{|J|}{m}\sum_{i_1+\cdots+i_n=|T|-m}\ \prod_{j=1}^n \left\langle \triv\ ,\ { \Wedge^{i_j}}V_j \right\rangle_{W_{I_j}}.
\end{align*}
By a theorem of Steinberg~\cite[Ch. 5, \S 2, Exercise 3]{Bourbaki}, the $W_{I_j}$-modules   
$\Wedge^{k}V_j$ are irreducible and distinct for $0\leq k\leq\dim V_j$.  Hence 
$\langle \triv\, ,\, { \Wedge^{i_j}}V_j\rangle_{W_{I_j}} = \delta_{i_j,0}$ and~\eqref{desired-Kronecker-delta} follows.
\end{proof}

\subsection{Expressing the ribbon decomposition of the coinvariant algebra}
A central object in invariant theory is the coinvariant algebra $S/S_+^W$ of a 
finite subgroup $W\subset \GL(V)$.  This is the graded quotient of $S$ by the ideal 
$S_+^W$ generated by homogeneous invariants of positive degree.  Recall that $W$ is a reflection group 
if and only if $S/S_+^W$ affords the regular representation as an ungraded module.

This section concerns the decomposition 
of the coinvariant algebra of a Shephard group $W$ into ribbon representations.  More precisely, 
we give a determinantal expression for the multivariate generating function
\[W(\mathbf{t},q){\Def}\sum_{T\subseteq R} \langle \chi^T,S/S_+^W\rangle(q)\ \mathbf{t}^T,\]
where $\mathbf{t}^T$ is defined below, and $\langle \chi,M\rangle(q)$ denotes 
the graded inner product $\sum_{d\geq 0}\langle \chi,M_d\rangle q^d$
of an element $\chi\in\mathbb C\otimes_{\mathbb Z}R(W)$ and a graded $W$-module $M=\bigoplus_{d=0}^\infty M_d$.

For a Shephard system $(W,R)$ and subset $J\subseteq R$, define
\[\mathbf{t}^J=\prod_{r_i\in J} t_i,\quad
(\mathbf{1}-\mathbf{t})^J=\prod_{r_i\in J} (1-t_i),\quad
W_{J}(q)=\Hilb(S/S_+^{W_J},q),\quad
W_{[i,j]}=W_{\{r_k\ :\ i\leq k\leq j\}},
\]
where $W_\varnothing$ is the trivial subgroup, we set $W(q)=W_R(q)$, and the \emph{Hilbert series} of 
a graded module $M=\bigoplus_{d=0}^\infty M_d$ is the formal power series $\Hilb(M,q):=\sum_{d\geq 0} M_d q^d$.

Recall that the generators of a Shephard system $(W,R)$ inherit an indexing from the associated chosen flag of faces 
$B_0\subset\cdots\subset B_{\ell-1}$.  However, in what follows it will be convenient to shift all indices by 1, thus writing  
$R=\{r_1,\ldots, r_\ell\}$.

\begin{theorem}\label{LinearDiagram}
Let $(W,R)$ be a Shephard system.  Then
\begin{equation}\label{Hilb:Series}W(\mathbf{t},q)=
W(q)\cdot\det\left[\begin{matrix}
1     & \frac{1}{W_{[1,1]}(q)} & \frac{1}{W_{[1,2]}(q)} &\cdots  &\frac{1}{W_{[1,\ell]}(q)} \\
t_1-1 & t_1                   & \frac{t_1}{W_{[2,2]}(q)}  &\cdots   &\frac{t_1}{W_{[2,\ell-1]}(q)} \\
0     & t_2-1                 &  t_2                      &\cdots  &\frac{t_2}{W_{[3, \ell-2]}(q)} \\
\vdots & \ddots              & \ddots                    &\ddots &\vdots \\
0      & \cdots              & 0                        &t_{\ell}-1   &t_{\ell}
\end{matrix}\right].\end{equation}
\end{theorem}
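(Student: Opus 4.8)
The plan is to turn the left-hand side into an explicit sum of rational functions indexed by the subsets of $R$, and then to recognize that sum as the Hessenberg determinant on the right.

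First I would compute $\langle\chi^T,S/S_+^W\rangle(q)$ for each $T\subseteq R$. By Theorem~\ref{S:Thm} the complex $\Delta$ is Cohen--Macaulay, so Theorem~\ref{Homology} applies; specializing it to $U=\varnothing$ (for which $W_{R\diff U}=W$, and $\Delta^{\varnothing}=\St_\Delta(W)=\Delta$, so $\Delta_T^{\varnothing}=\Delta_T$) yields the virtual-module identity
\[
\chi^T \;=\; \tilde H_{|T|-1}(\Delta_T)\;\cong_W\;\sum_{J\subseteq T}(-1)^{|T\diff J|}\,\Ind_{W_{R\diff J}}^{W}\triv .
\]
Pairing with the coinvariant algebra and applying Frobenius reciprocity, for each parabolic $W'=W_{R\diff J}$ the corresponding summand contributes $\langle\Ind_{W'}^{W}\triv,\,S/S_+^W\rangle(q)=\Hilb\bigl((S/S_+^W)^{W'},q\bigr)$; since $(S/S_+^W)^{W'}\cong S^{W'}/S_+^W S^{W'}$ and $S^{W'}$ is a free $S^W$-module (it is Cohen--Macaulay of full dimension over the polynomial ring $S^W$), this Hilbert series equals $\Hilb(S^{W'},q)/\Hilb(S^W,q)=W(q)/W_{R\diff J}(q)$. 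Hence $\langle\chi^T,S/S_+^W\rangle(q)=\sum_{J\subseteq T}(-1)^{|T\diff J|}\,W(q)/W_{R\diff J}(q)$.

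Next I would sum over $T$. Multiplying by $\mathbf t^T$, summing over $T\subseteq R$, interchanging the two summations, and using $\sum_{J\subseteq T\subseteq R}(-1)^{|T\diff J|}\mathbf t^T=\mathbf t^{J}(\mathbf 1-\mathbf t)^{R\diff J}$, I get
\[
W(\mathbf t,q)=W(q)\sum_{J\subseteq R}\frac{\mathbf t^{J}(\mathbf 1-\mathbf t)^{R\diff J}}{W_{R\diff J}(q)} .
\]
Because $W$ is a Shephard group its Coxeter-like diagram is an unbranched path (Section~\ref{Background}), so $r_ir_j=r_jr_i$ for $|i-j|\ge 2$; hence, if the maximal intervals of $R\diff J$ are $I_1,\dots,I_m$, then $W_{R\diff J}=W_{I_1}\times\cdots\times W_{I_m}$, and splitting $V$ into the corresponding $W_{R\diff J}$-invariant summands gives $S/S_+^{W_{R\diff J}}\cong\bigotimes_k S/S_+^{W_{I_k}}$, i.e.\ $W_{R\diff J}(q)=\prod_k W_{I_k}(q)$. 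The theorem is then equivalent to the purely combinatorial claim that
\[
\sum_{J\subseteq R}\;\Bigl(\prod_{r_i\in J}t_i\Bigr)\Bigl(\prod_{r_i\notin J}(1-t_i)\Bigr)\prod_{I}\frac{1}{W_I(q)}
\]
(the last product running over the maximal intervals $I$ of $R\diff J$) equals the determinant in~\eqref{Hilb:Series}.

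I would settle this identity by induction on $\ell$, via the recursion obtained by conditioning on $r_1$: if $r_1\in J$ it contributes a factor $t_1$ and one recurses on the sub-path $\{r_2,\dots,r_\ell\}$; if $r_1\notin J$, then the first maximal interval of $R\diff J$ is $[1,j]$ for some $j\ge 1$, contributing $(\mathbf 1-\mathbf t)^{[1,j]}/W_{[1,j]}(q)$, after which $r_{j+1}\in J$ contributes $t_{j+1}$ and one recurses on $\{r_{j+2},\dots,r_\ell\}$ (with the conventions $t_{\ell+1}=1$ and ``empty sum $=1$''). On the matrix side, $M$ in~\eqref{Hilb:Series} is upper Hessenberg with sub-diagonal entries $t_i-1$, and its leading principal submatrix on rows and columns $0,\dots,k$ is again the matrix associated to $\{r_1,\dots,r_k\}$; expanding $\det M$ along its first row, the cofactor of the entry $1/W_{[1,j]}(q)$ block-triangularizes (rows $r_{j+1},\dots,r_\ell$ vanish on columns $0,\dots,j-1$) into an upper-triangular top-left block of determinant $\prod_{i=1}^{j}(t_i-1)$ and a bottom-right block equal to $t_{j+1}$ times the matrix for $\{r_{j+2},\dots,r_\ell\}$, while the first-row term indexed by column $0$ contributes $t_1\cdot\det M_{\{r_2,\dots,r_\ell\}}$. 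Since the cofactor sign $(-1)^{j}$ cancels the $(-1)^{j}$ in $\prod_{i=1}^{j}(t_i-1)=(-1)^{j}(\mathbf 1-\mathbf t)^{[1,j]}$, the determinant obeys exactly the same recursion, and both sides equal $1$ when $\ell=0$, closing the induction. The genuinely delicate point of the whole argument is this last step --- tracking the block structure and the signs in the Hessenberg expansion, and matching ``maximal interval'' on the combinatorial side with the triangular blocks on the matrix side; everything preceding it is routine, resting only on Theorems~\ref{Homology} and~\ref{S:Thm}, the classical Hilbert-series formula for the trivial isotypic component of the coinvariant algebra restricted to a parabolic, and the product decomposition of parabolics of a group with an unbranched diagram.
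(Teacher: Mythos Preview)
Your proposal is correct and follows essentially the same route as the paper. Both arguments first reduce $\langle\chi^T,S/S_+^W\rangle(q)$ to $\sum_{J\subseteq T}(-1)^{|T\diff J|}W(q)/W_{R\diff J}(q)$ via the virtual-module description of $\chi^T$, Frobenius reciprocity, and the standard Hilbert-series identity for $(S/S_+^W)^{W'}$; both then use that the Shephard diagram is a path, so $W_{R\diff J}(q)$ factors over the maximal intervals of $R\diff J$.

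The only genuine difference is in the bookkeeping for the determinant. The paper writes down the full permutation expansion of the Hessenberg matrix in one stroke, obtaining $\sum_{J\subseteq R}(-1)^{|R\diff J|}\mathbf t^J(\mathbf t-\mathbf 1)^{R\diff J}/W_{R\diff J}(q)$ directly, and then simply reads off the coefficient of $\mathbf t^T$. You instead first sum your formula for $\langle\chi^T,S/S_+^W\rangle(q)$ over $T$ to get the same closed form, and then verify that this equals the determinant by induction on $\ell$ via cofactor expansion along the first row. Your inductive expansion is carried out correctly (the block-triangular structure and the sign cancellation are exactly as you describe), but the paper's one-shot permutation expansion is a little cleaner: it avoids the induction entirely and makes the role of the product decomposition $W_{R\diff J}(q)=\prod_k W_{I_k}(q)$ visible in a single line.
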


\begin{remark}
For a (finite) Coxeter system $(W,R)$, the \emph{multivariate $q$-Eulerian distribution} is defined to be
\[{\rm{Eul}}(\mathbf{t},q)=\sum_{g\in W}\mathbf{t}^{\mathrm{Des}(g)}q^{\ell(g)},\]
where $\ell(g)=\min\{m : r_{i_1}\cdots r_{i_m}=g\}$ is the usual length function on Coxeter groups, and 
$\mathrm{Des}(g)=\{r\in R\ :\ \ell(gr)<\ell(g)\}$ is the descent set of $g$.  
In the case of real Shephard groups, Reiner~\cite{Reiner}, following Stembridge~\cite{Stembridge}, established Theorem~\ref{LinearDiagram} with 
$\rm{Eul}(\mathbf{t},q)$ in place of $W(\mathbf{t},q)$.  
Thus, 
\begin{equation}\label{Eul:W(t,q)}{\rm{Eul}}(\mathbf{t},q)=W(\mathbf{t},q)\end{equation} 
for real Shephard groups.  Extending~\eqref{Eul:W(t,q)} to other Shephard groups is a problem of 
considerable interest.
\end{remark}

\begin{proof}[Proof of Theorem~\ref{LinearDiagram}]
Fix $T\subseteq R$ and consider the coefficient of $\mathbf{t}^T$ in $W(\mathbf{t},q)$.  
By Frobenius reciprocity, we have
\begin{align*}
\langle \chi^T, S/S_+^W\rangle(q) &= \sum_{J\subseteq T}(-1)^{|T\diff J|}\langle \Ind_{W_{R\diff J}}^W \triv,S/S_+^W\rangle(q)\\
&=\sum_{J\subseteq T}(-1)^{|T\diff J|}\langle \triv,S/S_+^W\rangle_{W_{R\diff J}}(q).
\end{align*}

Recall that for a reflection group $G\subset\GL(V)$, one has $S\cong S^G\otimes_{\mathbb C}S/S_+^G$ as graded $G$-modules.  
It follows that $W_J(q)=\frac{\Hilb(S,q)}{\Hilb(S^{W_J},q)}$ for any $J\subseteq R$, and that 
the graded $W$-module $S/S_+^W$ affords graded character $\chi(g)=\sum_{d\geq 0}{\mathrm{Tr}}(g\mid_{(S/S_+^W)_d})q^d=\frac{1}{\det(1-gq)}\frac{1}{\Hilb(S^W,q)}$.  
Therefore, 
\[\langle \triv,S/S_+^W\rangle_{W_{R\diff J}}(q)
=
\frac{1}{\Hilb(S^W,q)}\langle \triv,S\rangle_{W_{R\diff J}}(q)
=
\frac{\Hilb(S^{W_{R\diff J}},q)}{\Hilb(S^W,q)}=\frac{W(q)}{W_{R\diff J}(q)}.\]

Consider now the right-hand side of~\eqref{Hilb:Series}.  From the usual permutation expansion of the determinant, we have the 
following general equation:
\[\det\begin{bmatrix} a_{01} & a_{02} & a_{03} &\cdots & a_{0,n+1} \\ a_{11} & a_{12} &  a_{13} &\cdots & a_{1,n+1} \\
0 & a_{22} & \ddots &\ddots & \vdots \\ \vdots & \ddots & \ddots &\ddots &\vdots  \\
0 & \cdots & 0 & a_{nn} & a_{n,n+1}\end{bmatrix}=\sum_{1\leq i_1<i_2<\cdots< i_{j}\leq n} (-1)^{n-j} a_{0,i_{1}}a_{i_1,i_2}\cdots a_{i_j,n+1}\prod_{\stackrel{1\leq i\leq n}{i\neq i_1,\ldots, i_j}} a_{ii}.\]
Thus, the right-hand side of~\eqref{Hilb:Series} is equal to
\[W(q) \sum_{1\leq i_1<\cdots<i_j\leq \ell} (-1)^{\ell-j}\frac{t_{i_1}\cdots t_{i_j}}{W_{[1,i_1-1]}(q) W_{[i_1+1,i_2-1]}(q)\cdots W_{[i_j+1,\ell]}(q)}
\prod_{\stackrel{1\leq i\leq \ell}{i\neq i_1,\ldots, i_j}}(t_i-1),
\]
which, using the fact that $r_ir_j=r_jr_i$ for $|i-j|\geq 2$, can be written as
\[W(q)\sum_{J\subseteq R}(-1)^{|R\diff J|}\frac{\mathbf{t}^J}{W_{R\diff J}(q)}(\mathbf{t}-\mathbf{1})^{R\diff J}.\]
Taking 
the coefficient of $\mathbf{t}^T$ yields  $\sum_{J\subseteq T} (-1)^{|T\diff J|}\frac{W(q)}{W_{R\diff J}(q)}$.
\end{proof}

\section{Remarks and questions}

\subsection{An interesting family of well-framed systems}

In Section~\ref{Section:Well-framed} we introduced well-framed and strongly stratified systems $(W,R,\Lambda)$.  
The well-framed systems subsequently studied were additionally strongly stratified, locally conical, and 
produced Cohen-Macaulay complexes $\Delta(W,R)$.  However, there are well-framed systems $(W,R)$ 
lacking many of these properties, including that of $\Delta(\Pi^U_T\diff\{\hat{1}\})$ being homotopy equivalent to $\Delta^U_T$.  
We discuss some of these here.

Consider $\P_n$ the boundary of the standard $n$-simplex having vertices 
labeled by  the set $\{1,2,\ldots, n+1\}$.  Its symmetry group is $\mathfrak S_{n+1}$, and a 
minimal generating set of reflections corresponds to a spanning tree $T$ of the complete graph $K_{n+1}$ on $\{1,2,\ldots, n+1\}$ by 
identifying an edge $ij$ of $T$ with the transposition $(i,j)$; see~\cite{BabsonReiner}.  We will focus on the generating 
set
\[R^\star_n:=\{\underbrace{(1,n+1)}_{r_1},\underbrace{(2,n+1)}_{r_2},\ldots, \underbrace{(n,n+1)}_{r_n}\}\]
of $\mathfrak S_{n+1}$ that corresponds to the star graph with center $n+1$.   We leave the proof of the following result to the reader.

\begin{proposition}\label{Prop:Star}
Let $\P_n$ be as above.  Let $v_1,v_2,\ldots, v_{n+1}$ be the vertices of $\P_n$, indexed so that $v_i$ corresponds to the 
vertex labeled by $i$.  Let $\alpha_1,\alpha_2,\ldots,\alpha_n\in\mathbb C$ 
be pairwise linearly independent over $\mathbb R$, and  set $\Lambda:=\{\alpha_1 v_1,\ldots, \alpha_n v_n\}$.
Then $(\mathfrak S_{n+1},R^\star_n,\Lambda)$ is well-framed.
\end{proposition}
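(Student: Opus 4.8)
The plan is to make $\rho$ completely explicit, to observe that the frames listed in the statement are in fact \emph{all} frames for $R^\star_n$, and then to verify that $\rho$ is injective by comparing coefficients in $\mathbb C^{n+1}$. Realize $V=\{x\in\mathbb C^{n+1}:\sum_k x_k=0\}$ with $\mathfrak S_{n+1}$ permuting coordinates and set $v_k=e_k-\tfrac1{n+1}\mathbf 1$, so the $v_k$ are the vertices of $\P_n$, their only linear relation is $\sum_k v_k=0$, and $gv_k=v_{g(k)}$. Since $r_j=(j,n+1)$ has fixed hyperplane $H_j=\{x\in V:x_j=x_{n+1}\}$, one computes at once that $\bigcap_{j\neq i}H_j=\mathbb C v_i$; hence every frame for $R^\star_n$ has the asserted form $\Lambda=\{\alpha_1v_1,\dots,\alpha_nv_n\}$ with $\alpha_i\in\mathbb C^{\ast}$, and the hypothesis says exactly that the $\alpha_i$ are nonzero and pairwise non-proportional over $\mathbb R$. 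One also checks that $W_{R^\star_n\setminus\{r_i\}}=\{w\in\mathfrak S_{n+1}:w(i)=i\}$, so that $\rho(gW_{R^\star_n\setminus\{r_i\}})=\alpha_iv_{g(i)}$ and the maximal simplex $gW_\varnothing$ of $\Delta:=\Delta(\mathfrak S_{n+1},R^\star_n)$ is carried onto $\Hull(\alpha_1v_{g(1)},\dots,\alpha_nv_{g(n)})$.

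Because $||\Delta||$ is compact and $V$ is Hausdorff, it is enough to show $\rho$ is injective. It is injective on each closed maximal simplex, since $\{v_{g(i)}:i\le n\}=\{v_k:k\neq g(n+1)\}$ is a $\mathbb C$-basis of $V$, hence so are the $\alpha_iv_{g(i)}$, and a $\mathbb C$-basis is $\mathbb R$-affinely independent. As $\Delta$ is pure of dimension $n-1$, the remaining task is: given $y,y'$ in the closed simplices indexed by $g$ and $h$ with $\rho(y)=\rho(y')=:p$, deduce $y=y'$. Write $p=\sum_i s_i\alpha_iv_{g(i)}=\sum_i t_i\alpha_iv_{h(i)}$ with $s_i,t_i\ge0$, $\sum_i s_i=\sum_i t_i=1$, and put $A=\sum_i s_i\alpha_i$, $B=\sum_i t_i\alpha_i$; then the $k$-th coordinate of $p$ read off the first sum is $s_{g^{-1}(k)}\alpha_{g^{-1}(k)}-\tfrac{A}{n+1}$ for $k\neq g(n+1)$ and $-\tfrac{A}{n+1}$ for $k=g(n+1)$, and symmetrically for the second sum. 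Comparing coordinates, using that a nonnegative real multiple of $\alpha_i$ can equal a nonnegative real multiple of $\alpha_j$ (with $i\neq j$) only when both vanish: the coordinate(s) indexed by $g(n+1)$ and $h(n+1)$ force $A=B$ (when $g(n+1)\neq h(n+1)$ this follows from the two relations $t_{h^{-1}(g(n+1))}\alpha_{h^{-1}(g(n+1))}=\tfrac{B-A}{n+1}$ and $s_{g^{-1}(h(n+1))}\alpha_{g^{-1}(h(n+1))}=\tfrac{A-B}{n+1}$, whose left sides must therefore vanish); and then, for each other $k$, either $g^{-1}(k)=h^{-1}(k)=:i$ with $s_i=t_i$, or the corresponding $s$ and $t$ coefficients both vanish. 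Summing $\sum_i s_i=1$ over all $k\neq g(n+1)$ now forces $s_i=0$, and likewise $t_i=0$, whenever $g(i)\neq h(i)$. Thus $(s_i)$ and $(t_i)$ are supported on $I:=\{i:g(i)=h(i)\}$ and agree there; moreover, for $i\in I$ the vertices $gW_{R^\star_n\setminus\{r_i\}}$ and $hW_{R^\star_n\setminus\{r_i\}}$ coincide, because $g^{-1}h$ fixes each such $i$ and hence lies in $W_{R^\star_n\setminus\{r_i\,:\,i\in I\}}$. Therefore $y$ and $y'$ are the same convex combination of the same vertices, i.e.\ $y=y'$, and $(\mathfrak S_{n+1},R^\star_n,\Lambda)$ is well-framed.

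The only genuine bookkeeping is the case $g(n+1)\neq h(n+1)$, where the two maximal simplices are spanned by different $n$-element subsets of $\{v_1,\dots,v_{n+1}\}$; every other step is routine linear algebra. The hypothesis is used at precisely the points where one converts an equation of the shape ``(nonnegative real)$\,\cdot\alpha_i=$(nonnegative real)$\,\cdot\alpha_j$'' with $i\neq j$ into the vanishing of both sides --- that is exactly $\mathbb R$-linear independence of $\{\alpha_i,\alpha_j\}$ --- and it cannot be weakened to allowing the $\alpha_i$ real and distinct: for $n\ge3$ the pair $(\mathfrak S_{n+1},R^\star_n)$ is not a simple system (its Coxeter diagram is a complete graph with all labels $3$), so by Corollary~\ref{Corollary:Coxeter} no real frame makes it well-framed.
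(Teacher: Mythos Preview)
The paper does not prove this proposition; it states explicitly that the proof is left to the reader. Your argument supplies those details and is correct: the coordinate computation is the natural one, and the key use of the hypothesis---that $s\alpha_i=t\alpha_j$ with $s,t\geq 0$ and $i\neq j$ forces $s=t=0$---is precisely $\mathbb R$-linear independence of the $\alpha_i$.

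Two small points of presentation. In the case $g(n+1)\neq h(n+1)$, your two relations combine to $t_a\alpha_a+s_b\alpha_b=0$ with $a=h^{-1}(g(n+1))$ and $b=g^{-1}(h(n+1))$; the vanishing of both sides follows from $\mathbb R$-linear independence when $a\neq b$, and from $t_a+s_a\geq 0$ with $\alpha_a\neq 0$ when $a=b$---it is worth distinguishing these. More importantly, the final sentence ``the same convex combination of the same vertices'' implicitly assumes $\Delta$ is a simplicial complex, which is what you are proving. What you actually need (and have) is stronger: since $g^{-1}h$ fixes each $i\in I$, it lies in $W_{R^\star_n\setminus\{r_i:i\in I\}}\subseteq W_{R^\star_n\setminus\{r_i:s_i>0\}}$, so the \emph{cosets} $gW_{R^\star_n\setminus\{r_i:s_i>0\}}$ and $hW_{R^\star_n\setminus\{r_i:t_i>0\}}$ coincide, placing $y$ and $y'$ in the same open cell of the simplicial poset with the same barycentric coordinates. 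Making this explicit would tighten the argument.
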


For $n\geq 3$, a system $(\mathfrak S_{n+1},R^\star_n,\Lambda)$ as in Proposition~\ref{Prop:Star} 
is neither locally conical nor strongly stratified.  Moreover, for $n\geq 4$, it is no longer true that 
$\Delta^U_T$ is homotopy equivalent to $\Delta(\Pi^U_T\diff\{\hat{1}\})$ for every $U,T\subseteq R$ with $U$ nonempty.  These assertions are straightforward after 
first considering

\begin{example}  
Let $(\mathfrak S_5,R^\star_4,\Lambda)$ be a system as in Proposition~\ref{Prop:Star} with $n=4$.  The link of 
a $1$-simplex in $\Delta$ has 6 vertices supporting 3 lines; see Figure~\ref{Fig:TriangleExample}.  Thus, the system is 
not locally conical.  Moreover, it fails to be strongly stratified, since the vertices of $\Delta$ support 
only the 5 complex lines spanned by the vertices of $\P_4$.  

The link $\Delta^{\{r_4\}}_{\{r_1,r_2,r_3\}}=\lk_{\Delta}(W_{R\diff\{r_4\}})$ of a vertex is isomorphic to $\Delta(\mathfrak S_{4},R^\star_3)$.  The latter 
is isomorphic to the $3\times 4$ \emph{chessboard complex}, which is known to be a 2-torus; 
see~\cite[Example 3.1]{BabsonReiner} and~\cite[p. 30]{Chess}.  On the other hand, $\Pi^{\{r_4\}}_{\{r_1,r_2,r_3\}}\diff\{\hat{1}\}$ is easily seen to be 
the face poset of the barycentric subdivision of $\P_3$, hence its order complex is a 2-sphere.
\end{example}

\subsection{The remaining reflection groups}\label{Section:Well-generated_Exceptionals}
This subsection provides evidence that our main results partially extend to the remaining well-generated 
reflection groups.  

\begin{theorem}\label{Theorem:Well-Generateds}
If $(W,R)$ is a well-generated system, then $\Delta(W,R)$ is a simplicial complex.
\end{theorem}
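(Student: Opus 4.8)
The plan is to verify the \emph{intersection condition} of Babson and Reiner: that $\bigcap_{r\in R\setminus J}W_{R\setminus\{r\}}=W_J$ for every $J\subseteq R$. By \cite[Cor.~2.6]{BabsonReiner} this is equivalent to $\Delta(W,R)$ being a simplicial complex, and only the inclusion $\subseteq$ needs work. The geometric ingredient is that the $\ell=\dim V$ reflecting hyperplanes $H_1,\dots,H_\ell$ lie in \emph{general position}: each $r_i$ fixes $H_i\supseteq H_1\cap\dots\cap H_\ell$ pointwise, so $W=\langle r_1,\dots,r_\ell\rangle$ fixes $H_1\cap\dots\cap H_\ell$ pointwise, forcing $H_1\cap\dots\cap H_\ell=0$ by irreducibility, and a codimension count then yields $\dim\bigcap_{r_i\in S}H_i=\ell-|S|$ for all $S\subseteq R$. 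In particular the lines $L_k:=\bigcap_{i\ne k}H_i$ span $V$, one has $V^{W_J}=\bigcap_{r_i\in J}H_i$ (both inclusions being immediate from $W_J=\langle r_i:r_i\in J\rangle$), and comparing dimensions gives $V^{W_{R\setminus\{r\}}}=L_r$ and $\sum_{r\notin J}L_r=V^{W_J}$.

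Granting the identification $W_J=\mathrm{Fix}_W(V^{W_J})$ for every $J\subseteq R$, where $\mathrm{Fix}_W(X):=\{g\in W:gx=x\text{ for all }x\in X\}$, the theorem follows immediately, since
\[\bigcap_{r\in R\setminus J}W_{R\setminus\{r\}}=\bigcap_{r\notin J}\mathrm{Fix}_W\!\big(V^{W_{R\setminus\{r\}}}\big)=\mathrm{Fix}_W\!\Big(\sum_{r\notin J}L_r\Big)=\mathrm{Fix}_W\!\big(V^{W_J}\big)=W_J,\]
the middle step being linearity of $\mathrm{Fix}_W$. To prove the identification, note $W_J\subseteq\mathrm{Fix}_W(V^{W_J})$ trivially; by Steinberg's theorem \cite[Ch.~1]{Lehrer} the group $\mathrm{Fix}_W(V^{W_J})$ is generated by the reflections of $W$ fixing $V^{W_J}$ pointwise, so it suffices to show every reflection $s\in W$ with $V^{W_J}\subseteq H_s$ lies in $W_J$. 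I would first reduce to $W$ irreducible---for a decomposition $W=\prod_a W^{(a)}$ the desired identity splits into the identities for the factors---and then induct on $\ell$. For $J\subsetneq R$ choose $r_m\notin J$; then $\big(W_{R\setminus\{r_m\}},\,R\setminus\{r_m\}\big)$ is again a well-generated system of rank $\ell-1$ (minimality of $R$ prevents $R\setminus\{r_m\}$ from shrinking), so the inductive hypothesis gives $W_J=\mathrm{Fix}_{W_{R\setminus\{r_m\}}}(V^{W_J})=W_{R\setminus\{r_m\}}\cap\mathrm{Fix}_W(V^{W_J})$, and since $L_m=V^{W_{R\setminus\{r_m\}}}\subseteq V^{W_J}$ we get $\mathrm{Fix}_W(V^{W_J})\subseteq\mathrm{Fix}_W(L_m)$. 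Thus everything comes down to the single ``top'' parabolic: proving $\mathrm{Fix}_W(L_m)\subseteq W_{R\setminus\{r_m\}}$.

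This last step is where I expect the real difficulty to lie. Concretely, one must rule out two things: a generator $r_m$ that is a proper power of a reflection of larger order (which is excluded using minimality of $R$, since replacing such an $r_m$ by a generator of the cyclic pointwise stabilizer of $H_m$ keeps a generating set), and reflections of $W$ whose hyperplanes lie in the localized arrangement at the line $L_m$ but are not captured by $\langle r_i:i\ne m\rangle$. I would try to settle this uniformly by a rank or order count---playing $\langle\mathrm{Fix}_W(L_m),r_m\rangle=W$ against the fact that $\mathrm{Fix}_W(L_m)$ is itself generated by reflections on an $(\ell-1)$-dimensional space---and, should a uniform argument prove elusive, fall back on the Shephard--Todd classification, for which only the finitely many irreducible well-generated groups whose reflections can have order greater than $2$ need be inspected.
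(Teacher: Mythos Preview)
Your approach is essentially the same as the paper's. Both reduce to verifying the Babson--Reiner intersection condition via the identification $W_J=\Stab_W\!\big(\bigcap_{r_i\in J}H_i\big)$, and both then manipulate pointwise stabilizers in the same way (your $\Fix_W$ is the paper's $\Stab$). The paper simply isolates this identification as a separate result (Theorem~\ref{Theorem:Parabolic}), asserting that it can be obtained from the Shephard--Todd classification by considering cases, and then runs the same short chain of equalities you do. You provide additional scaffolding---an inductive reduction to the single ``top'' case $\Fix_W(L_m)=W_{R\setminus\{r_m\}}$---but correctly identify that this last step is the genuine difficulty and likely requires the classification, landing you in the same place. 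Your extra effort does isolate precisely \emph{where} the classification is needed, which is a modest gain in clarity over the paper's blanket appeal to it, but the content of the argument is the same.
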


Our proof relies on each subgroup $W_J$ being \emph{parabolic}, meaning 
the pointwise stabilizer of a subset $X\subseteq V$.  The following can be obtained from the 
classification by considering cases.

\begin{theorem}\label{Theorem:Parabolic}
Let $W\subset \GL(V)$ be well-generated by $R$.  Then each standard parabolic $gW_Jg^{-1}$ is parabolic.  More precisely, 
$gW_Jg^{-1}=\Stab(g\cdot\cap_{r\in J} H_r)$.
\end{theorem}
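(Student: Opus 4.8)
The plan is to reduce everything to the case $g=e$ and then to identify $\Stab(X_J)$ explicitly, where I abbreviate $X_J:=\bigcap_{r\in J}H_r$. First I would record that $\Stab$ is equivariant: for any subset $Y\subseteq V$ and $g\in W$ one has $g\,\Stab(Y)\,g^{-1}=\Stab(gY)$, since $h$ fixes $gY$ pointwise exactly when $g^{-1}hg$ fixes $Y$ pointwise. Because $V^{W_J}=\bigcap_{r\in J}V^{\langle r\rangle}=\bigcap_{r\in J}H_r=X_J$, the asserted identity $gW_Jg^{-1}=\Stab(g\cdot\bigcap_{r\in J}H_r)$ follows from the single equality $W_J=\Stab(X_J)$ by applying equivariance with $Y=X_J$. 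So it suffices to prove $W_J=\Stab(X_J)$.

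One inclusion is immediate: each generator $r\in J$ fixes its mirror $H_r\supseteq X_J$ pointwise, hence fixes $X_J$ pointwise, so $W_J\subseteq\Stab(X_J)$. For orientation on the reverse inclusion, I would observe that both sides are reflection groups of the same rank with the same fixed space. Since $W$ is irreducible, $\bigcap_{r\in R}H_r=V^W=0$, so the normals $\{\alpha_r\}_{r\in R}$ form a basis of $V^*$; the linear matroid they define therefore has full rank, every subset is independent, and $\dim X_J=\ell-|J|$ for all $J\subseteq R$. Thus $W_J$ is generated by $|J|$ reflections and has fixed space $X_J$ of codimension $|J|$, while by Steinberg's theorem (pointwise stabilizers in a complex reflection group are reflection groups, generated by the reflections they contain) $\Stab(X_J)$ is a reflection group generated precisely by the reflections $s\in W$ with $H_s\supseteq X_J$, and its fixed space is again the flat $X_J$. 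Hence $W_J\leq\Stab(X_J)$ are two reflection groups of rank $|J|$ sharing the fixed space $X_J$, and the theorem reduces to the claim that every reflection of $W$ whose mirror contains $X_J$ already lies in $W_J$.

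I would establish this last claim using the Shephard--Todd classification, case by case, as the theorem statement anticipates. For the infinite families there are uniform combinatorial models: when $W=\mathfrak S_{n+1}$ a minimal generating set $R$ is the edge set of a spanning tree of $K_{n+1}$, a subset $J$ is a subforest, $W_J$ is the direct product of the symmetric groups on its vertex-components, and $X_J$ is the subspace on which the coordinates are constant along components; a generic point of $X_J$ takes distinct values on distinct components, so $\Stab(X_J)$ is exactly that same product of symmetric groups, giving $W_J=\Stab(X_J)$. Analogous bookkeeping via the wreath-product structure handles the imprimitive groups $G(r,1,n)$, and the finitely many remaining exceptional and Shephard groups are a finite verification against their tables of mirrors. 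In every case the sole point to check is that no reflection $s$ with $H_s\supseteq X_J$ escapes $W_J$.

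The hard part will be precisely this last point, and it is where minimality of $R$ is essential rather than incidental. The danger is that $\Stab(X_J)$ might be a strictly larger reflection group than $W_J$ of the \emph{same} rank and fixed space---for instance a cyclic or dihedral group properly containing the subgroup generated by the relevant elements of $J$. This cannot be excluded by the dimension count of the second paragraph alone; one must use that $R$ generates all of $W$ with exactly $\ell$ reflections, which forces the mirrors through each flat $X_J$ to be fully accounted for by $J$. Supplying this structural input is exactly what the classification does, and it is the reason the argument is case-by-case rather than uniform.
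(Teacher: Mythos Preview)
Your proposal is correct and follows essentially the same route as the paper, which simply asserts that the result ``can be obtained from the classification by considering cases.'' You supply the natural preliminary reductions (equivariance to reduce to $g=e$, the easy inclusion $W_J\subseteq\Stab(X_J)$, and Steinberg's theorem to identify $\Stab(X_J)$ as the reflection subgroup generated by reflections with mirror containing $X_J$) before invoking the classification, which is exactly the case-check the paper has in mind but leaves implicit.
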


\begin{proof}[Proof of Theorem~\ref{Theorem:Well-Generateds}]  
To establish the intersection property (see Section~\ref{Section:Homology}), first note that
$\Stab(X_1)\cap \Stab(X_2)=\Stab(\Span(X_1,X_2))$ for $X_1,X_2\subseteq V$.  Thus, 
\begin{eqnarray*}
\bigcap_{r\in R\diff J} W_{R\diff\{r\}}&=&\bigcap_{r\in R\diff J} \Stab(\cap_{s\in R\diff\{r\}} H_s)\\
&=&\Stab(\Span(\{\cap_{s\in R\diff\{r\}} H_s\ |\ r\in R\diff J\}))\\
&=&\Stab(\cap _{r\in J} H_r)\\
&=&W_J,
\end{eqnarray*}
where the third equality follows from the obvious inclusion by comparing dimensions. 
\end{proof}

\begin{remark}
Consider the rank 1 reflection group $Z_6:=G(6,1,1)$ generated by a primitive 6th root of unity $\zeta$.  
Observe that $R=\{\zeta^2,\zeta^3\}$ is a minimal generating set with $|R|>\dim V$ and $\Delta(Z_6,R)$ 
simplicial.  Thus, a well-generated group $W$ and minimal generating set $R$ may 
yield a simplicial complex even when $W$ is not well-generated by $R$.  
\end{remark}

\begin{question}  
Let $W$ be a well-generated group, and let $R$ be a minimal generating set of reflections under inclusion.  Is 
$\Delta(W,R)$ necessarily a simplicial complex?
\end{question}

\begin{question}
For which non-well-generated groups $W$ is $\Delta(W,R)$ a simplicial complex for some $R$?
\end{question}

Let $(W,R)$ be a well-generated system.  Motivated by~\eqref{Pi:Rewrite}, define
\[\Supp:\Delta(W,R)\to \L_W\quad\text{by}\quad gW_J\mapsto V^{gW_J g^{-1}}\]
and let
\[\Delta^U_T\ \Def\ \St_{\Delta(W,R)}(W_{R\diff U})\mid_T\quad\text{and}\quad \Pi^U_T\ \Def\ \{V^{gW_J g^{-1}}\ :\ gW_J\in \Delta^U_T\}\]
for $U,T\subseteq R$.  Recall that we identify $\Delta^U_T$ with the poset of faces of a simplicial complex, and thus 
$\Face(\Delta^U_T)$ is obtained from $\Delta^U_T$ by removing its unique bottom element $W$.  
Call $(W,R)$ \emph{(abstractly) locally conical} if for each $U,T\subseteq R$ with $U$ nonempty,  every 
Quillen fiber of $\Supp:\Face(\Delta^U_T)\to \Pi^U_T\diff\{\hat{1}\}$ has a cone point.  
Note that if $(W,R)$ is (abstractly) locally conical, then $\Delta(\Pi_T^U\diff\{\hat{1}\})$ is $W_{R\diff U}$-homotopy equivalent 
to $\Delta^U_T$ for all $U,T\subseteq R$ with $U$ nonempty.

\begin{conjecture}\label{Conjecture:Conical}
For each well-generated reflection group $W$, there exists a well-generating $R$ for which $(W,R)$ is 
(abstractly) locally conical.
\end{conjecture}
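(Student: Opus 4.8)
Since this is stated as a conjecture rather than a theorem, what follows is the program I would pursue rather than a complete argument. Because $W$ is irreducible throughout the paper, there is no product reduction to make, and the Coxeter and Shephard cases are already done: for a simple system or a Shephard system we have a well-framed $(W,R,\Lambda)$, and by the identity $\Supp(gW_J)=V^{gW_Jg^{-1}}$ of Theorem~\ref{Commutative} the abstract support map of this section coincides with the geometric one, so ``(abstractly) locally conical'' is literally the conclusion of Theorem~\ref{C:Thm}\eqref{C:locallyconical} and Theorem~\ref{S:Thm}\eqref{S:locallyconical}. By the Shephard--Todd classification, what remains is the imprimitive family $G(e,e,n)$ with $e,n\geq 3$ together with the five exceptional groups $G_{24},G_{27},G_{29},G_{33},G_{34}$. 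So the plan is: (i) try to isolate a uniform criterion on the frame $\Lambda$ — a common generalization of ``the extreme-ray vectors of a Weyl chamber pairwise meet at nonobtuse angles'' (Lemma~\ref{Lemma:Proj}) and ``the centroids of a maximal flag form an orthoscheme'' (Proposition~\ref{Prop:Orthoscheme}) — under which the cone-approximation machinery runs; and, failing a clean such criterion, (ii) dispatch the two remaining kinds of groups directly.

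For the five exceptional groups I would attempt a finite, computer-assisted verification, organised so as to avoid enumerating $W$. Fix $R$ realising the standard presentation; use $\Delta^U=\{W_{R\setminus U}\}*\lk_\Delta(W_{R\setminus U})$ (Lemma~\ref{Lemma:Contractible}) to pass to links of smaller rank, and check the cone-point condition on Quillen fibers by induction on $|U|$ and on $\ell$, so that in the end only the rank-$\leq 3$ links must be inspected; moreover the relevant combinatorial data lives in the intersection lattice $\L_W$ and the poset $P(W,R)$, both tiny compared with $W$ itself. The obstacle here is that, unlike in Sections~\ref{Section:Coxeter} and~\ref{Section:Shephard:2}, we do not know a priori that $\Delta(W,R)$ is Cohen--Macaulay for the chosen $R$, so the inductive bookkeeping is not automatic, and $G_{33}$ and $G_{34}$ are far too large for a naive search to close the gaps.

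The family $G(e,e,n)$ with $e,n\geq 3$ is where I expect the real difficulty to lie: these groups are neither Coxeter nor Shephard, so neither the convexity argument of Lemma~\ref{C:approx} (there is no Coxeter complex of the right type) nor the orthoscheme argument of Proposition~\ref{unique} (there is no regular complex polytope) applies. The natural line is to exploit the index-$e$ inclusion $G(e,e,n)\hookrightarrow G(e,1,n)=Z_e\wr\mathfrak S_n$, the latter being a Shephard group already handled by Theorem~\ref{S:Thm}: choose distinguished generators of $G(e,e,n)$ compatible with a Shephard system for $G(e,1,n)$, show that the restricted frame still gives a well-framed system for $G(e,e,n)$, and argue that a cone point of a Quillen fiber inside $\Delta^U_T(G(e,1,n))$ restricts to one inside $\Delta^U_T(G(e,e,n))$. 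Establishing that the embedding, the support maps, and the cone points all descend compatibly along this inclusion — or, if that fails, building a bespoke frame for $G(e,e,n)$ directly and re-running a cone-approximation argument adapted to it — is the step I expect to require a genuinely new idea.
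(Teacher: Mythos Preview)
You have correctly recognised that this statement is a \emph{conjecture}: the paper offers no proof, and your text is explicitly a program rather than an argument. There is therefore nothing to compare against; your identification of the already-settled cases (Coxeter via Theorem~\ref{C:Thm}\eqref{C:locallyconical}, Shephard via Theorem~\ref{S:Thm}\eqref{S:locallyconical}) and of the residual cases ($G(e,e,n)$ with $e,n\geq 3$ and $G_{24},G_{27},G_{29},G_{33},G_{34}$) matches the paper's own accounting in the introduction and in Section~\ref{Section:Well-generated_Exceptionals}.

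As a research outline your plan is reasonable, and you are candid about the obstacles. Two cautions worth flagging. First, the reduction ``pass to links of smaller rank'' is less automatic here than in the Coxeter or Shephard setting: for the five exceptionals one does not know that $\lk_\Delta(W_{R\setminus\{r\}})$ is again a $\Delta(W',R')$ of the same kind, so the induction on $\ell$ you sketch would need an independent justification (or simply be replaced by a direct enumeration, which is feasible for $G_{24},G_{27},G_{29}$ but, as you note, heavy for $G_{33},G_{34}$). Second, for $G(e,e,n)$ the inclusion into $G(e,1,n)$ does not respect the standard well-generating sets in any obvious way---the Shephard generator $\mathrm{diag}(\zeta,1,\ldots,1)$ of $G(e,1,n)$ is absent from $G(e,e,n)$, and the usual well-generating set of $G(e,e,n)$ contains a ``twisted'' transposition that is not a distinguished generator of $G(e,1,n)$---so ``compatible'' would have to be made precise before one could hope that cone points descend. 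These are exactly the places you flag as requiring a new idea, so the assessment is accurate.
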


Further, we predict the following partial extension of Theorems~\ref{C:Thm} and~\ref{S:Thm}.

\begin{conjecture}\label{Conjecture:Main}
For each well-generated reflection group $W$, there exists a generating set $R$ and a frame $\Lambda$ such that
\begin{enumerate}[(i)]
\item  $|R|=\dim V$.
\item  $(W,R,\Lambda)$ is strongly stratified.
\item  $(W,R,\Lambda)$ is locally conical.
\end{enumerate}
\end{conjecture}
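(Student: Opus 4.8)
Since the conjecture holds for Coxeter and Shephard groups by Theorems~\ref{C:Thm} and~\ref{S:Thm}, and since a well-generated system for a reducible group is built from well-generated systems for its irreducible factors, we may assume that $W$ is irreducible and is neither Coxeter nor Shephard. The groups that remain are then the three primitive rank-$2$ groups $G_{12},G_{13},G_{22}$, the five exceptional groups $G_{24},G_{27},G_{29},G_{33},G_{34}$, and the infinite family $G(e,e,n)$ with $e\geq 3$ and $n\geq 3$. The plan is to treat $G(e,e,n)$ by a single uniform construction and to dispatch the finite list case by case. In every instance Theorem~\ref{Theorem:Well-Generateds} already guarantees that $\Delta(W,R)$ is a simplicial complex for any well-generating $R$, so the whole task is to exhibit a frame $\Lambda$ for which the canonical map $\rho$ of Definition~\ref{Def:Well-Framed} is an embedding, and such that the resulting system is both strongly stratified and locally conical.

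For $G(e,e,n)$ I would take $W$ in its standard monomial action on $\CC^n$ and let $R=\{r_1,\dots,r_n\}$ be a set of reflections realizing the known Coxeter-like presentation of $G(e,e,n)$ (whose diagram is the type-$D_n$ diagram with its branch node replaced by the appropriate two-generator braid relation). Imitating Example~\ref{Example:Coxeter}, one chooses a nonzero $\lambda_i\in\bigcap_{j\ne i}H_j$ for each $i$: along the ``chain'' part of the diagram the natural representatives are scalar multiples of $e_1+\cdots+e_i$, while the two ``forked'' generators push the remaining $\lambda_i$ into twisted one-dimensional spaces that, as in Example~\ref{TriangleExample}, must be taken with nonreal scalars. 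The key first step is to identify $\rho(\Delta(W,R))$ combinatorially; the expected model is a complex of ``$e$-balanced colored set compositions'' of $[n]$, the $G(e,e,n)$-analogue of the set compositions of Section~\ref{Section:Ehrenborg}, and from such a description well-framedness should follow just as in type $A$. Strong stratification then reduces to verifying that every flat of the $D$-type Dowling lattice $\L_{G(e,e,n)}$ is the support of some face, and local conicality would be obtained by transporting the iterative cone-point argument of Lemma~\ref{C:approx}, or in dual form the distinguished-face argument in the proof of Theorem~\ref{S:Thm}\eqref{S:locallyconical}, to this combinatorial model. As an alternative to constructing the model by hand, one could realize $G(e,e,n)$ inside the Shephard group $G(e,1,n)=Z_e\wr\symm_n$ — whose reflection arrangement contains that of $G(e,e,n)$ — and restrict the Shephard construction already available there.

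For the three primitive rank-$2$ groups and the five exceptional groups the problem is finite. Fix a Coxeter element $c\in W$ together with a reduced reflection factorization $c=r_1\cdots r_\ell$; the set $R=\{r_1,\dots,r_\ell\}$ well-generates $W$, and a frame is obtained by choosing any nonzero $\lambda_i\in\bigcap_{j\ne i}H_j$. By the discussion following Example~\ref{NonEmbedding}, well-framedness then amounts to checking that the facets $\rho(eW_\varnothing)$ and $\rho(wW_\varnothing)$ overlap only as their gluing allows, for all $w\in W$; this is an open condition on the frame, so it is enough to exhibit and verify one good choice, and likewise strong stratification and local conicality are finite checks over the flats of $\L_W$ and over the pairs $(U,X)$ with $U\subseteq R$ nonempty and $X\in\Pi^U\diff\{\hat{1}\}$. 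All three verifications are routine for a computer algebra package such as \textsc{Chevie}.

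The main obstacle is the global nature of well-framedness stressed in Section~\ref{Section:Well-framed}: it cannot be checked on links, so for $G(e,e,n)$ one genuinely needs the combinatorial model of $\rho(\Delta)$ to be correct and the intersection pattern of its simplices to be controlled throughout $\CC^n$. The cleanest resolution would be an analogue for $G(e,e,n)$ of Orlik's Milnor-fiber-complex theorem (Theorem~\ref{barycentric}); but $G(e,e,n)$ with $e\geq 3$ is not the symmetry group of a regular complex polytope, so that machinery does not apply verbatim, and providing a substitute — a direct geometric argument, or the reduction to $G(e,1,n)$ sketched above — is the crux of the matter. A secondary difficulty, already visible in the example $(\symm_{n+1},R^\star_n)$ of Section~\ref{Section:Well-generated_Exceptionals}, is that local conicality is sensitive to the choice of well-generating set, so for the exceptional groups one may need to search over several generating sets rather than rely on a single factorization of a Coxeter element.
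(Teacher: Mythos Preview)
The statement you are attempting to prove is labeled a \emph{conjecture} in the paper, and the paper does not supply a proof; it is left open. So there is no proof in the paper to compare your proposal against, and your write-up is not a proof either: as you yourself acknowledge, it is a research plan with the central difficulties (well-framedness for $G(e,e,n)$, the absence of a Milnor-fiber-complex analogue, and the sensitivity of local conicality to the choice of $R$) explicitly left unresolved.

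Beyond that, your reduction contains a classification error. The groups $G_{12}$, $G_{13}$, and $G_{22}$ are \emph{not} well-generated: each has rank $2$ but requires three reflections to generate, so they lie outside the scope of the conjecture altogether. The paper is explicit that among the exceptional well-generated groups, exactly five are neither Coxeter nor Shephard, namely $G_{24}$, $G_{27}$, $G_{29}$, $G_{33}$, $G_{34}$; together with $G(e,e,n)$ for $e\geq 3$, $n\geq 3$, these are the only irreducible cases remaining. Your outline for $G(e,e,n)$ and for the five exceptionals is a reasonable sketch of how one might attack the problem, but until the embedding and cone-point arguments are actually carried out it remains a strategy, not a proof.
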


\subsection{Shellability}

It is well-known~\cite{Bjorner:Shelling} that the Coxeter complex $\Gamma$ for a finite 
Coxeter group
is \emph{shellable}, meaning that  
its facets can be ordered $F_1,F_2,\ldots, F_k$ so that the subcomplex $F_j\cap\left( \cup_{i=1}^{j-1} F_i\right)$
is pure of dimension $\dim \Delta-1$ for all $j\geq 2$.  

The question of whether the flag complex $K(\P)$ of a regular complex polytope $\P$ is lexicographically shellable 
appears in~\cite[Question 16]{OrlikReinerShepler} and~\cite[p. 32]{Bjorner:Nonpure}.  
By Section~\ref{Section:Shephard:2}, $K(\P)$ is isomorphic to $\Delta(W,R)$ for $(W,R)$ a Shephard system for $\P$.  
It is straightforward to shell those of rank 2, as they are 
connected graphs, and it is also straightforward for $G(r,1,n)=\mathbb Z_r\wr\mathfrak S_n$.  
Those of Coxeter type are shellable, as mentioned in Question~\ref{Q:Shelling}.  The 
author used a computer to produce shellings in the remaining cases: 

\begin{theorem}\label{ShellingThm}  Let $(W,R)$ be a Coxeter or Shephard system.  Then $\Delta(W,R)$ 
is shellable.
\end{theorem}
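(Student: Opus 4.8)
The plan is to reduce to the classification of Shephard groups recalled in Section~\ref{Section:Shephard:2}, together with the Coxeter case, and to treat each family separately; no case-free argument is in sight. Recall that by Theorem~\ref{Parabolic} the complex $\Delta(W,R)$ for a Shephard system is $W$-equivariantly isomorphic to the flag complex $K(\P)$, and for a Coxeter system it is the Coxeter complex of $W$; in every case $\Delta(W,R)$ is pure of dimension $\ell-1$ and balanced, so ``shellable'' is a purely combinatorial assertion about the facet poset, and the facets of $\Delta(W,R)$ are indexed by $W$, the facet of $g\in W$ being $gW_\varnothing$.

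First I would dispatch the two infinite families. If $(W,R)$ is a Coxeter system, then $\Delta(W,R)$ is the Coxeter complex (a join of irreducible Coxeter complexes if $W$ is reducible), which is shellable by Bj\"orner~\cite{Bjorner:Shelling}; this covers the real Shephard groups $A_n$, $B_n=C_n$, $F_4$, $H_3$, $H_4$, $I_2(n)$ as well. The rank-$2$ Shephard complexes $p_0[q]p_1$ are connected graphs, hence shellable, since every connected $1$-dimensional complex is shellable. For $G(r,1,n)=\ZZ_r\wr\symm_n$ with $r>2$, the polytope $\P$ is a generalized cross-polytope whose faces admit the $\ZZ_r$-decorated ordered-set-partition model generalizing the type-$B$ picture of Figure~\ref{Fig:CubeExample}; I would produce an explicit lexicographic shelling of $K(\P)$ by mimicking Bj\"orner's weak-order shelling of the Coxeter complex of $B_n$: order the facets $gW_\varnothing$ by a linear extension of the analogue of weak order on $G(r,1,n)$, and verify the codimension-one intersection condition block-by-block. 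One should confirm that the shelling rule is uniform in $r$ and $n$, which the wreath-product description makes routine.

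That leaves the three exceptional Shephard groups $G_{25}=3[3]3[3]3$, $G_{26}=2[4]3[3]3$, and $G_{32}=3[3]3[3]3[3]3$. Here I would compute directly: using the identification of facets with group elements from Theorem~\ref{Parabolic}, search --- greedily, for instance via a vertex labeling that induces a candidate lexicographic order --- for a linear order $F_1,F_2,\ldots,F_{|W|}$ of the facets such that for every $j\ge 2$ the subcomplex $F_j\cap(F_1\cup\cdots\cup F_{j-1})$ is pure of dimension $\ell-2$; equivalently, for every $i<j$ there is $k<j$ with $F_i\cap F_j\subseteq F_k\cap F_j$ and $\dim(F_k\cap F_j)=\ell-2$. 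Such an order is a finite certificate that can be recorded and checked independently. Finding it is within easy reach for $G_{25}$ ($648$ facets) and $G_{26}$ ($1296$ facets); for $G_{32}$ it is a genuine, though routine, machine computation. As an organizing aid one may use that the link of a face $B_i$ in $K(\P)$ is the join $K(\P_{\le B_i})*K(\P_{\ge B_i})$ of two flag complexes of lower-rank Shephard polytopes, so once the lower-rank cases are in hand the shelling condition only needs to be monitored at facets; this does not by itself yield a global shelling of $G_{32}$.

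The main obstacle is precisely $G_{32}$: with $|G_{32}|=155{,}520$ facets, each a $3$-simplex, the shelling order and its verification are feasible only by computer, and one must present the search output in a reader-checkable form; there is no structural reason to expect a clean combinatorial shelling, so the theorem is unavoidably case-by-case with a finite computation at the end. A secondary difficulty is that, unlike the Coxeter case, there is no a priori ``weak order'' to fall back on for the exceptionals, so the candidate facet order must be engineered --- for example by a recursive lexicographic rule adapted to the flag structure of $\P$ --- rather than read off from known combinatorics.
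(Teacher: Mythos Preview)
Your proposal is correct and matches the paper's own approach essentially point for point: the paper likewise dispatches the Coxeter case via Bj\"orner~\cite{Bjorner:Shelling}, observes that rank-$2$ flag complexes are connected graphs, declares the $G(r,1,n)$ case ``straightforward'' without further detail, and then reports a computer-produced shelling for the remaining exceptionals $G_{25}$, $G_{26}$, $G_{32}$. Your identification of $G_{32}$ as the only genuinely computational obstacle is exactly the situation the paper leaves open in Question~\ref{Q:Shelling}.
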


\begin{question}\label{Q:Shelling}  
Is there a uniform way of shelling the flag complex $K(\P)$ of a regular complex polytope?  
This would give a more direct proof that $K(\P)$ is homotopy Cohen-Macaulay.
\end{question}

The following was inspired by a personal communication with Taedong Yun and~\cite[Section 8]{Ehrenborg}.

\begin{question}  Let $(W,R)$ be a Coxeter or Shephard system.  Is $\Pi^U_T\diff\{\hat{1}\}$ shellable 
for all $U,T\subseteq R$?
\end{question}

\section{Acknowledgments}
This work forms part of the author's doctoral thesis at the University of 
Minnesota, supervised by Victor Reiner, whom the author thanks 
for suggesting some of these questions.  He is also grateful to Marcelo Aguiar and Volkmar Welker 
for their helpful suggestions and corrections, Richard Ehrenborg and JiYoon Jung for making their work available, 
Stephen Griffeth, Jia Huang, Gus Lehrer, and Vivien Ripoll for helpful conversations, 
and Michelle Wachs for writing a set of truly wonderful notes~\cite{Wachs:Tools}.
{}
\end{document}